\documentclass[11pt,reqno]{amsart}
\usepackage{amssymb,amsmath,amsthm,amsrefs,enumerate,verbatim,bbm}
\usepackage[a4paper]{geometry}
\usepackage[utf8]{inputenc}
\usepackage[english]{babel}
\usepackage{amsfonts}
\usepackage{dsfont}
\usepackage[pdftex]{graphicx}
\usepackage{mathrsfs}
\usepackage[mathscr]{euscript}
\usepackage{microtype}
\usepackage{amssymb}
\usepackage{amsthm}
\usepackage{amsmath}
\usepackage{a4wide}
\usepackage{latexsym}

\usepackage[v2,tips]{xy}
\usepackage{float}
\usepackage[T1]{fontenc}
\usepackage{lmodern}
\usepackage{xcolor}

\makeatletter
\@namedef{subjclassname@2010}{%
  \textup{2010} Mathematics Subject Classification}
\makeatother

\title[Algebras of operators on long sequence spaces]{Surjective homomorphisms from algebras\\ of operators on long sequence spaces\\ are automatically injective}

\newtheorem{Thm}{Theorem}[section]
\newtheorem*{tha*}{{\textbf{Theorem A}}}
\newtheorem*{thmb*}{{\textbf{Theorem B}}}
\newtheorem{Lem}[Thm]{Lemma}
\newtheorem{Prop}[Thm]{Proposition}
\newtheorem{Cor}[Thm]{Corollary}

\theoremstyle{definition}

\newtheorem{Rem}[Thm]{Remark}
\newtheorem{Que}[Thm]{Question}

\newtheorem*{Ack}{Acknowledgement}
\newtheorem*{Fun}{Funding}

\numberwithin{equation}{section}

\DeclareMathOperator*{\Ran}{\mathrm{im}}
\DeclareMathOperator*{\Ker}{\mathrm{ker}}

\DeclareMathOperator*{\id}{\mathrm{id}}

\DeclareMathOperator*{\spanning}{\mathrm{span}}

\DeclareMathOperator*{\support}{\mathrm{supp}}
\DeclareMathOperator*{\cf}{\mathrm{cf}}

\newcommand{\vertiii}[1]{{\left\vert\kern-0.25ex\left\vert\kern-0.25ex\left\vert #1 
    \right\vert\kern-0.25ex\right\vert\kern-0.25ex\right\vert}}

\newcounter{smallromans}

\newenvironment{romanenumerate}
{\begin{list}{{\normalfont\textrm{(\roman{smallromans})}}}%
  {\usecounter{smallromans}\setlength{\itemindent}{0cm}%
   \setlength{\leftmargin}{5.5ex}\setlength{\labelwidth}{5.5ex}%
   \setlength{\topsep}{.5ex}\setlength{\partopsep}{.5ex}%
   \setlength{\itemsep}{0.1ex}}}%
{\end{list}}

\newcounter{smallromansdash}

{\end{list}}

\newcounter{bigromans} 
  {\end{list}}

\date{\today}

\author{Bence Horváth}
\address{Institute of Mathematics, Czech Academy of Sciences, \v{Z}itn\'{a} 25, 115~67 Prague 1, Czech Republic}
\email{horvath@math.cas.cz, hotvath@gmail.com}

\author{Tomasz Kania}
\address{Institute of Mathematics, Czech Academy of Sciences, \v{Z}itn\'{a} 25, 115~67 Prague 1, Czech Republic and Institute of Mathematics, Jagiellonian University, {\L}ojasiewicza 6, 30-348 Krak\'{o}w, Poland}

\email{kania@math.cas.cz, tomasz.marcin.kania@gmail.com}

\subjclass[2010]{Primary 46H10, 47L10; Secondary 46B03, 46B07, 46B10, 46B26, 47L20}

\keywords{Banach space, ideal, long sequence space, bounded linear operator, $\sigma_{\rm SOT}$ topology, semisimple, algebra homomorphism, automatically injective, SHAI property, complementably homogenous}


\begin{document}

\maketitle

\begin{abstract}
We study automatic injectivity of surjective algebra homomorphisms from $\mathscr{B}(X)$, the algebra of (bounded, linear) operators on $X$, to $\mathscr{B}(Y)$, where $X$ is one of the following \emph{long} sequence spaces: $c_0(\lambda)$, $\ell_{\infty}^c(\lambda)$, and $\ell_p(\lambda)$ ($1 \leqslant p < \infty$) and $Y$ is arbitrary. \textit{En route} to the proof that these spaces do indeed enjoy such a property, we classify two-sided ideals of the algebra of operators of any of the aforementioned Banach spaces that are closed with respect to the `sequential strong operator topology'.
\end{abstract}

\section{Introduction and known results}	

Algebras of operators on Banach spaces are quite rigid as illustrated by Eidelheit's Theorem {(see \cite[Theorem~2.5.7]{Dales})}, which asserts that for two Banach spaces $X$ and $Y$, the algebras of operators $\mathscr{B}(X)$ and $\mathscr{B}(Y)$ on the respective spaces are isomorphic as rings/Banach algebras precisely when $X$ and $Y$ are isomorphic as Banach spaces. Thus, in a sense, lots of isomorphic Banach space theory may be translated to algebraic problems concerning the algebras $\mathscr{B}(X)$ and \emph{vice versa}. \smallskip 

As observed by the first-named author in \cite{horvath2}, for many Banach spaces $X$ such as $c_0$ or $\ell_p$ ($p\in [1,\infty])$, a~stronger rigidity phenomenon is available: for every non-zero Banach space $Y$, every surjective algebra homomorphism $\varphi\colon \mathscr{B}(X)\to \mathscr{B}(Y)$ is automatically injective, that is, it is an algebra isomorphism. In the said paper, such spaces have been termed to have the SHAI property after \emph{surjective homomorphisms are injective}, and we continue using this terminology. Interestingly, not every Banach space enjoys such a property; spaces $X$ whose algebra $\mathscr{B}(X)$ admits a character (a non-zero homomorphism into the scalar field) are obvious counter-examples (historically, the first two examples of such spaces are $X = J_2$, the James space (\cite[paragraph~8]{edmit}; see also \cite[Theorem~4.16]{laustsenmax1}), and $X=C[0,\omega_1]$, the space of continuous functions on the ordinal interval $[0,\omega_1]$). \smallskip

Let us list positive results concerning SHAI from \cite{horvath2}. The following Banach spaces have the SHAI property:
\begin{romanenumerate}
    \item \label{ellpshi} $c_0$ and $\ell_p$ for $p\in [1,\infty]$;
    \item \label{hilbshi} Hilbert spaces of arbitrary density, \emph{e.g.}, $\ell_2(\Gamma)$ for any set $\Gamma$;
    \item \label{arbdistortshi} complementably minimal spaces $X$ that contain a complemented subspace isomorphic to $X\oplus X$ (in particular, Schlumprecht's arbitrarily distortable Banach space $S$);
    \item $X = (\bigoplus_{n=1}^\infty \ell_2^n)_{c_0}$ and $X = (\bigoplus_{n=1}^\infty \ell_2^n)_{\ell_1}$.
\end{romanenumerate}
Moreover, if both Banach spaces $X_1$ and $X_2$ have the  SHAI property, then so has $X=X_1\oplus X_2$.\smallskip

In particular, it follows that direct sums such as $c_0 \oplus \ell_p$ and $\ell_p \oplus \ell_q$ have the SHAI property for every $1 \leqslant p,q \leqslant \infty$. The importance of this result is that $\mathscr{B}(\ell_p \oplus \ell_q)$ has a very complicated ideal structure (see \cite{fschzs2, fschzs, schzs}) and the study of automatic injectivity of surjective homomorphisms is intimately related with their kernels that are closed ideals of $\mathscr{B}(X)$ themselves.\smallskip

On the negative side, let us record the following results here for the sake of completeness, established by the first-named author in \cite{horvath2}. These are \cite[Lemma~2.2]{horvath2} and \cite[Theorem~1.7]{horvath2}, respectively. Note that the first result we already invoked in the case where $\dim Y =1$.

\begin{romanenumerate}
    \item Let $X$ be an infinite-dimensional Banach space such that $\mathscr{B}(X)$ admits a~finite-dimensional quotient. Then $X$ does not have the SHAI property. In particular, the James space, $C[0,\omega_1]$, and hereditarily indecomposable Banach spaces and finite direct sums thereof fail to have SHAI.
    
    \item Let $X$ be a non-zero, separable, reflexive Banach space, and consider the injective tensor product $Y_X:= C_0[0, \omega_1) \hat{\otimes}_{\varepsilon} X$. There exist a non-injective contractive algebra homomorphism $\Theta\colon  \mathscr{B}(Y_X) \rightarrow \mathscr{B}(X)$ and a contractive algebra homomorphism $\Lambda\colon  \mathscr{B}(X) \rightarrow \mathscr{B}(Y_X)$ such that $\Theta \circ \Lambda = \id_{\mathscr{B}(X)}$. In particular, $\Theta$ is surjective.
\end{romanenumerate}

In \cite{horvath2}, a promise concerning establishing the SHAI property for Banach spaces of the form $\ell_p(\Gamma)$ for any $p\in [1,\infty)$ and every set $\Gamma$ was made. The aim of this paper is to fulfil this promise actually for a larger class of Banach spaces that we collectively call \emph{long}.

\begin{tha*}\label{longseqshai}
	The Banach spaces $c_0(\lambda)$, $\ell_{\infty}^c(\lambda)$, and $\ell_p(\lambda)$ have the SHAI property for every infinite cardinal $\lambda$ and every $p \in [1, \infty)$.
\end{tha*}

Along the way, we establish new results concerning the lattice of closed ideals of $\mathscr{B}(\ell_p(\Gamma))$ for any set $\Gamma$, introduce and use a certain topology that we term the $\sigma$-strong operator topology (denoted by $\sigma_{\rm SOT}$), which for long sequence spaces (that is, when $\Gamma$ is uncountable), is intermediate between the strong operator topology and the norm topology.\smallskip

Among other things, we prove the following result concerning the set $\mathscr{S}_{E_\kappa}(X)$ of operators in $\mathscr{B}(X)$ that do not preserve isomorphic copies of $E_\kappa$, where $E_\kappa$ is one of the long sequence spaces considered in the present paper.

\begin{thmb*}\label{uncountcof}
	Let $X$ be a Banach space and let $\kappa$ be a cardinal number with uncountable cofinality. Consider one of the following cases:
	\begin{enumerate}
		\item $E_{\kappa}:= c_0(\kappa)$ and $X$ has an M-basis;
		\item $E_{\kappa}:= \ell_p(\kappa)$ and $X:= \ell_p(\lambda)$, where $\lambda \geqslant \kappa$ and $p \in (1,\infty)$;
		\item $E_{\kappa}:= \ell_1(\kappa)$.
	\end{enumerate}
		Then the set $\mathscr{S}_{E_{\kappa}}(X)$ is $\sigma_{\rm SOT}$-closed in $\mathscr{B}(X)$.
\end{thmb*}

Furthermore, with the aid of a striking new result of Koszmider and Laustsen from \cite{lkad}, we prove in Proposition \ref{not3sp} results related to the three-space problem (for example, that SHAI is not a three space property of Banach spaces, even though it is preserved by finite direct sums). Some questions related to the SHAI property of certain Banach spaces are also left open.\pagebreak

\section{Preliminaries} 
For a set $S$, we denote by $\mathscr{P}(S)$ the power-set of $S$. The symbol $[S]^2$ stands for the subset of $\mathscr{P}(S)$ whose elements consist of exactly two elements of $S$.\smallskip

For a function $f\colon X\to Y$, we denote by ${\rm im}\, f$ the image of $f$. For a set $\widehat{Y}\supseteq {\rm im}\, f$, we denote by $f|^{\widehat{Y}}$ the corestriction of $f$ to $\widehat{Y}$, that is, we consider it a map $f\colon X\to \widehat{Y}$. For a subset $\widehat{X}\subseteq X$, we denote by $f|_{\widehat{X}}$ the restriction of $f$ to $\widehat{X}$.\smallskip

We use von Neumann's approach to ordinal and cardinal numbers; for example, we consider the latter initial ordinal numbers. If $\kappa$ is a cardinal number then $\kappa^+$ denotes its successor cardinal. \emph{Cofinality} of a set of ordinal numbers $\Lambda$, $\cf(\Lambda)$, is the least cardinality of a cofinal subset of $\Lambda$. A cardinal number is \emph{regular}, whenever it is equal to its cofinality. The following lemma is standard, see, \emph{e.g.}, \cite[Lemma~3.2]{dawsideal}. 

\begin{Lem}\label{cofinal}
	Let $\kappa$ be a cardinal number with $\cf(\kappa) > \omega$. Let $(\Lambda_n)_{n=1}^\infty$ be a sequence of sets such that $| \Lambda_n | < \kappa$ for all $n \in \mathbb{N}$. Then $| \bigcup_{n=1}^\infty  \Lambda_n | < \kappa$.
\end{Lem}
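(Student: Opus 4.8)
The plan is to reduce the statement to elementary cardinal arithmetic under the axiom of choice, with the hypothesis $\cf(\kappa) > \omega$ entering at exactly one point. First I would record the easy consequence that $\cf(\kappa) > \omega$ forces $\kappa \geqslant \aleph_1$; indeed $\cf(\kappa) \leqslant \kappa$, so $\kappa$ is in particular an infinite cardinal with $\kappa > \aleph_0$. Write $\mu_n := | \Lambda_n |$, so $\mu_n < \kappa$ for every $n$. Choosing (by the axiom of choice) surjections onto each $\Lambda_n$ from an appropriate ordinal, one gets $\bigl| \bigcup_{n=1}^\infty \Lambda_n \bigr| \leqslant \sum_{n=1}^\infty \mu_n \leqslant \aleph_0 \cdot \sup_{n \in \mathbb{N}} \mu_n$.

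The key step is to show that $\sup_{n} \mu_n < \kappa$, and this is where the hypothesis is used. Suppose toward a contradiction that $\sup_{n} \mu_n = \kappa$ (it cannot exceed $\kappa$ since each $\mu_n < \kappa$). Then $(\mu_n)_{n=1}^\infty$ is a sequence of ordinals below $\kappa$ whose supremum is $\kappa$, i.e.\ a cofinal subset of $\kappa$ of cardinality at most $\aleph_0$; hence $\cf(\kappa) \leqslant \omega$, contradicting the assumption. Therefore $\sup_{n} \mu_n < \kappa$.

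To conclude, put $\nu := \max\{\aleph_0, \sup_{n} \mu_n\}$. Both $\aleph_0$ and $\sup_{n} \mu_n$ are cardinals strictly below $\kappa$, so $\nu < \kappa$ (the maximum of two cardinals equals one of them). Since $\nu$ is an infinite cardinal, standard arithmetic gives $\aleph_0 \cdot \sup_{n} \mu_n \leqslant \nu \cdot \nu = \nu < \kappa$, and combining with the displayed inequality above yields $\bigl| \bigcup_{n=1}^\infty \Lambda_n \bigr| < \kappa$, as required.

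I do not expect any genuine obstacle here: the only substantive point is the reduction $\sup_n \mu_n < \kappa$, which is precisely the content of uncountable cofinality; the remaining manipulations ($\aleph_0 \cdot \lambda = \lambda$ for infinite $\lambda$, and that a maximum of two sub-$\kappa$ cardinals is sub-$\kappa$) are routine and could even be cited. One should only take minor care with the degenerate case in which all $\mu_n$ are finite, which is harmless since then $\sup_n \mu_n \leqslant \aleph_0 < \kappa$ and the same bound $\nu = \aleph_0 < \kappa$ applies.
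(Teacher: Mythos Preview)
Your argument is correct: the inequality $\bigl|\bigcup_n \Lambda_n\bigr| \leqslant \aleph_0 \cdot \sup_n \mu_n$, the observation that $\sup_n \mu_n < \kappa$ by uncountable cofinality, and the cardinal arithmetic $\aleph_0 \cdot \nu = \nu$ for infinite $\nu$ combine exactly as you say, and your handling of the finite-$\mu_n$ edge case is fine.

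There is no proof in the paper to compare against: the authors state the lemma as standard and simply refer to \cite[Lemma~3.2]{dawsideal} without giving any argument. Your write-up therefore supplies what the paper omits, and is precisely the standard cofinality-plus-cardinal-arithmetic reasoning one would expect behind such a citation.
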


Let $\mathbb K$ denote the field of real or complex numbers. Let $\Gamma$ be a set and $p\in [1,\infty]$. When $p<\infty$, we denote by $\ell_p(\Gamma)$ the space of all functions $f\colon \Gamma \to \mathbb K$ with $\sum_{\gamma\in \Gamma} |f(\gamma)|^p<\infty$ normed by the $1/p$\textsuperscript{th} power of this expression. When $p=\infty$, $\ell_\infty(\Gamma)$ stands for the space of all bounded functions $f\colon \Gamma \to \mathbb K$ normed by the supremum norm. When $\Gamma$ is uncountable, $\ell_\infty^c(\Gamma)$ stands for the (closed) subspace of $\ell_\infty(\Gamma)$ comprising functions for which the set ${\rm supp}\, f = \{\gamma\in \Gamma\colon f(\gamma)\neq 0\}$ is finite or countably infinite. The symbol $c_0(\Gamma)$ denotes the space of all functions $f\colon \Gamma \to \mathbb K$ such that the set $\{\gamma \in \Gamma \colon |f(\gamma)| \geqslant \varepsilon\}$ is finite for every $\varepsilon >0$. It is a standard fact that all the aforementioned spaces are complete. Whenever $\Gamma$ is uncountable, we collectively call the spaces $c_0(\Gamma), \ell_p(\Gamma)$ and $\ell_\infty^c(\Gamma)$ \emph{long sequence spaces}.\medskip

\subsubsection{Operator ideals}\label{sect: op ids}
Let $X$ and $Y$ be Banach spaces. We denote by $\mathscr{B}(X,Y)$ the space of all (bounded, linear) operators from $X$ to $Y$, which is a Banach space under the operator norm. In particular, $\mathscr{B}(X):=\mathscr{B}(X,X)$ is a Banach algebra. We shall be primarily interested in surjective algebra homomorphisms $\varphi\colon \mathscr{B}(X)\to \mathscr{B}(Y)$, which are known to be automatically continuous due to the fundamental result of B.~E.~Johnson, see for example \cite[Theorem~5.1.5]{Dales}.\smallskip

For a Banach space $X$, $\mathscr{F}(X), \mathscr{A}(X), \mathscr{K}(X), \mathscr{W}(X)$ stand for the ideals of $\mathscr{B}(X)$ comprising finite-rank operators, approximable operators (operators in the closure of $\mathscr{F}(X)$), compact operators, and weakly compact operators, respectively. We denote by $\mathscr{X}(X)$ the ideal of operators that have separable range and by $\mathscr{E}(X)$ the ideal of inessential operators, that is, operators $T\in \mathscr{B}(X)$ such that for any $A\in \mathscr{B}(X)$ both operators $I_X - AT, I_X - TA$ are Fredholm.\smallskip

For fixed Banach spaces $X,Y$, and $Z$ the symbol $\mathscr{S}_Z(Y,X)$ denotes the subset of those operators in $\mathscr{B}(Y,X)$ which are not bounded below on any subspace of $Y$ isomorphic to $Z$. In other words, for $T \in \mathscr{B}(Y,X)$ we have $T \notin \mathscr{S}_Z(Y,X)$ if and only if there is a closed subspace $W$ of $Y$ such that $W \cong Z$ and $T |_W$ is bounded below, that is, there exists $\gamma > 0$ such that $\|Tw\| \geqslant \gamma \|w\|$ for all $w\in W$. We also make use of the abbreviation $\mathscr{S}_Y(X) := \mathscr{S}_Y(X,X)$. The elements of the set $\mathscr{S}(Y,X)$, defined as the intersection of all $\mathscr{S}_Z(Y,X)$, where $Z$ ranges through all infinite-dimensional subspaces of $Y$, are called \emph{strictly singular} operators. It is well-known that for every Banach space $X$ one has the inclusions
$$\mathscr{A}(X)\subseteq \mathscr{K}(X)\subseteq \mathscr{S}(X)\subseteq \mathscr{E}(X)\qquad \text{ and }\qquad \mathscr{A}(X)\subseteq \mathscr{K}(X)\subseteq \mathscr{W}(X).$$

For Banach spaces $X$ and $Y$, the set $\mathscr{S}_Y(X)$ is closed under multiplication from the left and from the right by arbitrary operators in $\mathscr{B}(X)$. However, $\mathscr{S}_Y(X)$ need not be closed under addition. To see this, let us consider, for example, $X = \ell_p \oplus \ell_q$, where $1 \leqslant q < p < \infty$, in which case the projection on the respective summands are in $\mathscr{S}_X(X)$ but their sum is not. It is also obvious that $\mathscr{S}(X) \subseteq \mathscr{S}_Y(X) \subseteq \mathscr{S}_Z(X)$ for infinite-dimensional Banach spaces $X,Y$ and $Z$ with $Y \subseteq Z$.

Lastly, if $X$ and $Y$ are Banach spaces with $Y$ non-separable, then $\mathscr{X}(X) \subseteq \mathscr{S}_Y(X)$. Indeed, if $T \in \mathscr{B}(X)$ is such that $T \notin \mathscr{S}_Y(X)$ then there is a closed subspace $Z$ of $X$ with $Z \cong Y$ such that $T |_Z$ is bounded below. Hence
\[
Y \cong Z \cong \Ran(T |_Z) \subseteq \Ran(T),
\]
which shows that $Y$ embeds into $\Ran(T)$, so that $\Ran(T)$ cannot be separable.\medskip

\subsubsection{Complementably homogenous Banach spaces} An infinite-dimensional Banach space is\index{complementably homogenous} \textit{complementably homogenous}, whenever for every closed subspace $Y$ of $X$ with $Y \cong X$ there exists a complemented subspace $W$ of $X$ with $W \cong X$ and $W \subseteq Y$. The spaces $c_0$ and $\ell_p$ (where $1 \leqslant p < \infty$) are well known to be complementably homogenous; this follows, for example, from \cite[Lemma~2]{pelczynski1}. When $\lambda$ is an uncountable cardinal, $c_0(\lambda)$ and $\ell_p(\lambda)$ are also known to be complementably homogenous. These results follow for example, from \cite[Proposition~2.8]{acgjm} and \cite[Proposition~3.10]{jksch}, respectively. For every infinite cardinal number $\lambda$, the Banach space $\ell_{\infty}^c(\lambda)$ is complementably homogenous too (see \cite[Theorem~1.2]{jksch}). \smallskip

We would like to draw the reader's attention to the paper \cite{rodsal} of Rodr\'{\i}guez-Salinas, which seems to be a bit overlooked. The author had already shown in this paper that for an infinite cardinal number $\lambda$, every complemented subspace of $\ell_p(\lambda)$ (for $1<p< \infty$) is isomorphic to $\ell_p(\kappa)$ for some cardinal $\kappa \leqslant \lambda$ (see \cite[Theorem~4]{rodsal}).\smallskip

The following lemma is a slight generalisation of Whitley's result \cite[Theorem~6.2]{whitley}. 

\begin{Lem}\label{complhomideal}
	Suppose that $X$ is a complementably homogenous Banach space. Let $\mathscr{J}$ be a~subset of $\mathscr{B}(X)$ that is closed under multiplication from the left and from the right by arbitrary operators in $\mathscr{B}(X)$. If $\mathscr{J}$ is a proper subset of $\mathscr{B}(X)$, then $\mathscr{J} \subseteq \mathscr{S}_X(X)$.
\end{Lem}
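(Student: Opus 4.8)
The plan is to prove the contrapositive: if $\mathscr{J} \not\subseteq \mathscr{S}_X(X)$, then $\mathscr{J} = \mathscr{B}(X)$. Fix $T \in \mathscr{J}$ with $T \notin \mathscr{S}_X(X)$; by definition there is a closed subspace $W$ of $X$ with $W \cong X$ such that $T|_W$ is bounded below, hence an isomorphism onto the closed subspace $T(W)$, and $T(W) \cong W \cong X$. The strategy is to extract from $T$ a bounded projection $P$ on $X$ with $\Ran P \cong X$ and $P \in \mathscr{J}$; given such a $P$, pick an isomorphism $\iota \colon X \to \Ran P$, regard it as an operator $J \in \mathscr{B}(X)$, and set $K := \iota^{-1} \circ P \in \mathscr{B}(X)$. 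A direct check gives $KPJ = \id_X$, so $\id_X \in \mathscr{J}$ because $\mathscr{J}$ absorbs multiplication on both sides, and therefore $\mathscr{J} = \mathscr{B}(X)$.

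To produce $P$ I would use complementable homogeneity twice. First, applied to $W$, it provides a complemented subspace $W_0$ of $X$ with $W_0 \cong X$ and $W_0 \subseteq W$; let $P_0$ be a projection of $X$ onto $W_0$. Since $T|_{W_0}$ is still bounded below, $T(W_0)$ is closed and isomorphic to $X$, so a second application of complementable homogeneity, now to $T(W_0)$, yields a complemented subspace $U$ of $X$ with $U \cong X$ and $U \subseteq T(W_0)$, together with a projection $Q$ of $X$ onto $U$. Restricting $Q$ to $T(W_0)$ exhibits $U$ as complemented in $T(W_0)$; transporting this projection back along the isomorphism $T|_{W_0}$ shows that $V := (T|_{W_0})^{-1}(U)$ is complemented in $W_0$, and composing with $P_0$ makes it complemented in $X$. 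Note $V \cong U \cong X$ and that $T$ maps $V$ isomorphically onto $U$. Let $P$ be a projection of $X$ onto $V$ and define $\widehat{S} := (T|_V)^{-1} \circ Q \in \mathscr{B}(X)$. Then $\widehat{S}\, T\, P = P$: for $x \in X$ one has $Px \in V$, hence $TPx \in U$, hence $QTPx = TPx$, and finally $(T|_V)^{-1}(TPx) = Px$. Thus $P = \widehat{S}\, T\, P \in \mathscr{J}$, as required.

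The main obstacle is the gap between being complemented in $W$ and being complemented in $X$: the copy of $X$ on which $T$ is bounded below need not be complemented in $X$, so one cannot simply invert $T|_W$ and extend the inverse to all of $X$. The twofold use of complementable homogeneity — first to shrink $W$ to a complemented copy $W_0$, then to locate a complemented copy $U$ inside $T(W_0)$ whose preimage $V$ is complemented in $W_0$ (hence in $X$) — is exactly what bridges this gap, and it is the only place where the hypothesis on $X$ is genuinely used. Everything else (the idempotency bookkeeping for the various projections and the identity $KPJ = \id_X$) is routine.
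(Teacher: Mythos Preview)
Your proof is correct, and the overall strategy (prove the contrapositive by factoring $I_X$ through $T$) matches the paper's. However, you take a longer route than necessary. The paper applies complementable homogeneity only \emph{once}, to $\Ran(T|_W)$, obtaining a complemented subspace $\Ran(P)\subseteq \Ran(T|_W)$ with $\Ran(P)\cong X$, and then writes down directly
\[
I_X \;=\; (S\circ P|^{\Ran(P)})\circ T\circ(\iota\circ T_1^{-1}|_{\Ran(P)}\circ S^{-1}),
\]
where $T_1=T|_W^{\Ran(T|_W)}$, $\iota\colon W\hookrightarrow X$ is inclusion, and $S\colon \Ran(P)\to X$ is an isomorphism. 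No intermediate projection in $\mathscr{J}$ is produced, and---crucially---no complementation on the \emph{domain} side is ever needed.

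The ``obstacle'' you identify (that one cannot extend $(T|_W)^{-1}$ to $X$ because $W$ may not be complemented) is a red herring: the paper sidesteps it by precomposing $T_1^{-1}|_{\Ran(P)}$ with the isomorphism $S^{-1}\colon X\to \Ran(P)$, which is already globally defined. So your first use of complementable homogeneity (to replace $W$ by a complemented $W_0$) and the subsequent bookkeeping to make $V$ complemented in $X$ are unnecessary. What you gain from the detour is a projection $P\in\mathscr{J}$ with $\Ran P\cong X$, which is a pleasant intermediate object; what the paper gains is a one-line factorisation and a single invocation of the hypothesis on $X$.
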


\begin{proof} 
	Assume that $\mathscr{J} \nsubseteq \mathscr{S}_X(X)$ and take $T \in \mathscr{J}$ such that $T \notin \mathscr{S}_X(X)$. Then there exists a~subspace $W$ of $X$ such that $W \cong X$ and $T \vert_W$ is bounded below. Set $T_1 := T \vert_{W}^{\Ran(T \vert_W)}$, then $T_1 \in \mathscr{B}(W, \Ran(T \vert_W))$ is an isomorphism. In particular, $\Ran(T \vert_W) \cong W \cong X$. Since $X$ is complementably homogenous, there exists an idempotent $P \in \mathscr{B}(X)$ with $\Ran(P) \cong X$ and $\Ran(P) \subseteq \Ran(T \vert_W)$. Let $S \in \mathscr{B}(\Ran(P),X)$ be an isomorphism and let $\iota \colon \, W \rightarrow X$ denote the canonical embedding. Since $\Ran(P) \subseteq \Ran(T \vert_W)$, clearly $T_1^{-1} \vert_{\Ran(P)} \in \mathscr{B}(\Ran(P),W)$. It is therefore immediate that
	\begin{align}
	(S \circ P \vert^{\Ran(P)}) \circ T \circ (\iota \circ T_1^{-1} \vert_{\Ran(P)} \circ S^{-1}) = S \circ P \vert^{\Ran(P)} \circ P \vert_{\Ran(P)} \circ S^{-1} =I_X.
	\end{align}	
	Consequently, as $T \in \mathscr{J}$, it follows that $I_X \in \mathscr{J}$, equivalently, $\mathscr{J} = \mathscr{B}(X)$. 
\end{proof}

\begin{Cor}\label{complhommaxideal}
	Let $X$ be a complementably homogenous Banach space. Then $\mathscr{S}_X(X)$ is the unique maximal two-sided ideal of $\mathscr{B}(X)$ if and only if $\mathscr{S}_X(X)$ is closed under addition.
\end{Cor}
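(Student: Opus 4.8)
The plan is to derive this directly from Lemma~\ref{complhomideal} together with two elementary observations already recorded above: first, that $\mathscr{S}_X(X)$ is closed under multiplication from the left and from the right by arbitrary operators in $\mathscr{B}(X)$; and second, that $I_X \notin \mathscr{S}_X(X)$, since $I_X$ is (trivially) bounded below on $X$ itself and $X \cong X$, so in particular $\mathscr{S}_X(X)$ is a proper subset of $\mathscr{B}(X)$.

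For the forward implication, if $\mathscr{S}_X(X)$ is the unique maximal two-sided ideal of $\mathscr{B}(X)$, then it is \emph{a fortiori} a two-sided ideal and hence closed under addition. For the converse, suppose $\mathscr{S}_X(X)$ is closed under addition. Combined with the multiplicative property above, this makes $\mathscr{S}_X(X)$ a two-sided ideal of $\mathscr{B}(X)$, and it is proper because $I_X \notin \mathscr{S}_X(X)$. To see that it is the unique maximal one, let $\mathscr{J}$ be an arbitrary proper two-sided ideal of $\mathscr{B}(X)$. Then $\mathscr{J}$ is closed under multiplication from both sides by elements of $\mathscr{B}(X)$ and $\mathscr{J} \neq \mathscr{B}(X)$, so Lemma~\ref{complhomideal} gives $\mathscr{J} \subseteq \mathscr{S}_X(X)$. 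Thus $\mathscr{S}_X(X)$ is a proper two-sided ideal containing every proper two-sided ideal of $\mathscr{B}(X)$, whence it is maximal and is the only maximal two-sided ideal.

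Since the whole argument reduces to a single invocation of Lemma~\ref{complhomideal}, there is no real obstacle here; the only point deserving (minor) attention is the verification that $\mathscr{S}_X(X) \neq \mathscr{B}(X)$, which is what allows $\mathscr{S}_X(X)$ to be a proper ideal in the first place and what (via the lemma applied to an arbitrary proper ideal) forces its maximality and uniqueness.
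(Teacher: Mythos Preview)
Your proof is correct and follows exactly the approach implicit in the paper, which states the corollary without proof as an immediate consequence of Lemma~\ref{complhomideal}. The two key observations you spell out---that $\mathscr{S}_X(X)$ absorbs multiplication on both sides and that $I_X \notin \mathscr{S}_X(X)$---are precisely what the paper records just before the lemma, so nothing more is expected.
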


Using Lemma~\ref{complhomideal}, it is possible to give an alternative proof of the fact that the algebras of bounded operators on $c_0$ and $\ell_p$ ($p\in [1,\infty)$) have only one non-trivial closed two-sided ideal, namely $\mathscr{S}_X(X)$. Even though the result is well-known, its proof is hard to find in the literature, so we take Lemma~\ref{complhomideal} as an excuse for presenting the proof here in full detail.

\begin{Cor}\label{classicalideal}
	Let $X:=c_0$ or $X:= \ell_p$, where $1 \leqslant p < \infty$. Then $\mathscr{A}(X) = \mathscr{E}(X) = \mathscr{S}_X(X)$. If $X:= \ell_{\infty}$, then $\mathscr{E}(X) = \mathscr{X}(X) = \mathscr{S}_X(X)$. In either case, $\mathscr{S}_X(X)$ is the unique maximal two-sided ideal of $\mathscr{B}(X)$.
\end{Cor}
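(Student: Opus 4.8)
The plan is to combine the classical fact that approximable, compact, strictly singular, and inessential operators on $c_0$ and $\ell_p$ all coincide, with Lemma~\ref{complhomideal} applied to the (necessarily proper) ideal $\mathscr{S}_X(X)$. First I would recall the standard Pitt-type / gliding-hump argument: if $X = c_0$ or $\ell_p$ with $1 \leqslant p < \infty$ and $T \in \mathscr{B}(X) \setminus \mathscr{A}(X)$, then $T$ fixes a copy of $X$. Indeed, since $X$ has a shrinking (for $c_0$ and $\ell_p$, $p>1$) or boundedly complete (for $\ell_1$) unit vector basis and $T$ is not approximable, a perturbation/gliding-hump argument produces a normalised block basic sequence $(x_n)$ on which $T$ acts as an isomorphism onto a further block basic sequence; since block basic sequences of the unit vector basis of $X$ span subspaces isomorphic to $X$ (and complemented, but we only need isomorphic here), we get $T \notin \mathscr{S}_X(X)$. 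Contrapositively, $\mathscr{S}_X(X) \subseteq \mathscr{A}(X)$. The reverse inclusions $\mathscr{A}(X) \subseteq \mathscr{K}(X) \subseteq \mathscr{S}(X) \subseteq \mathscr{E}(X)$ hold for every Banach space (stated in the excerpt), and $\mathscr{S}(X) \subseteq \mathscr{S}_X(X)$ always, while $\mathscr{E}(X) \subseteq \mathscr{S}_X(X)$ needs a short argument: if $T \notin \mathscr{S}_X(X)$ then $T$ fixes a copy $W \cong X$ of $X$, and since $X$ is complementably homogeneous $W$ contains a complemented copy $W_0 \cong X$ of $X$ on which $T$ is still bounded below, so $T(W_0)$ is a closed subspace isomorphic to $X$; but then, as in the proof of Lemma~\ref{complhomideal}, one builds $A, B \in \mathscr{B}(X)$ with $ATB = I_X$, whence $I_X - (ATB)$ is certainly not Fredholm-complemented — more precisely, taking $A = $ the map $I_X \mapsto$ rescaling, one sees $I_X = ATB$ is Fredholm while $I_X - AT\cdot 0$ trivially is, so this does not immediately contradict inessentiality; instead I would argue directly that $\mathscr{E}(X)$ is a proper ideal containing $\mathscr{S}(X)$, and since $\mathscr{E}(X)$ is itself closed under two-sided multiplication and proper (as $I_X \notin \mathscr{E}(X)$), Lemma~\ref{complhomideal} gives $\mathscr{E}(X) \subseteq \mathscr{S}_X(X)$.

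This last observation is in fact the cleanest route to the whole chain: $\mathscr{S}_X(X)$ is proper (the identity is not in it, since $X$ trivially fixes a copy of itself), it is closed under left and right multiplication by arbitrary operators (noted in the excerpt), and so is $\mathscr{E}(X)$, which is a genuine two-sided ideal that is proper because $I_X$ is not inessential. Applying Lemma~\ref{complhomideal} to $\mathscr{E}(X)$ yields $\mathscr{E}(X) \subseteq \mathscr{S}_X(X)$. Combined with $\mathscr{S}_X(X) \subseteq \mathscr{A}(X) \subseteq \mathscr{K}(X) \subseteq \mathscr{S}(X) \subseteq \mathscr{E}(X)$, all five coincide. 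For $X = \ell_\infty$ the argument is the same except that $\mathscr{A}(\ell_\infty) = \mathscr{K}(\ell_\infty)$ is \emph{strictly} smaller than $\mathscr{S}_{\ell_\infty}(\ell_\infty)$ — indeed $\ell_\infty$ is a Grothendieck space and injective, so the relevant equalities are $\mathscr{E}(\ell_\infty) = \mathscr{X}(\ell_\infty) = \mathscr{S}_{\ell_\infty}(\ell_\infty)$: here one uses that an operator $T$ on $\ell_\infty$ fixes a copy of $\ell_\infty$ if and only if its range is non-separable (Rosenthal's theorem that any operator from $\ell_\infty$ with non-separable range fixes a copy of $\ell_\infty$), giving $\mathscr{S}_{\ell_\infty}(\ell_\infty) = \mathscr{X}(\ell_\infty)$, and $\mathscr{X}(\ell_\infty) \subseteq \mathscr{E}(\ell_\infty)$ while $\mathscr{E}(\ell_\infty) \subseteq \mathscr{S}_{\ell_\infty}(\ell_\infty)$ again by Lemma~\ref{complhomideal} applied to $\mathscr{E}(\ell_\infty)$.

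Finally, maximality: $\mathscr{S}_X(X)$ is a two-sided ideal in all these cases (it equals $\mathscr{E}(X)$, which is an ideal), and by Lemma~\ref{complhomideal} \emph{every} proper ideal of $\mathscr{B}(X)$ is contained in it, so it is the unique maximal two-sided ideal; equivalently one invokes Corollary~\ref{complhommaxideal}, the point being precisely that in these examples $\mathscr{S}_X(X)$ \emph{is} closed under addition (being an ideal). The main obstacle is the fixing lemma — showing that a non-approximable operator on $c_0$ or $\ell_p$ fixes an isomorphic copy of the whole space — which for $p = 1$ and for $c_0$ requires the boundedly-complete/$c_0$-block-basis version of the gliding-hump argument rather than the reflexive one, and for $\ell_\infty$ is replaced by Rosenthal's deeper theorem on operators with non-separable range. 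Everything else is bookkeeping with Lemma~\ref{complhomideal} and the standard inclusions among operator ideals, plus Pitt's theorem to rule out isomorphic copies of $\ell_q$ inside $\ell_p$ for $q \neq p$ so that "fixing a copy of $X$" is the only obstruction to approximability.
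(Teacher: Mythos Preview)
Your plan is correct and matches the paper's overall structure: use Lemma~\ref{complhomideal} on the proper ideal $\mathscr{E}(X)$ to obtain $\mathscr{E}(X)\subseteq\mathscr{S}_X(X)$, and separately establish $\mathscr{S}_X(X)\subseteq\mathscr{A}(X)$ by showing that every non-approximable operator fixes a copy of $X$; maximality then follows. The only substantive difference is in how that last implication is obtained. You propose the classical gliding-hump/block-basis construction by hand, whereas the paper quotes a factorisation lemma of Pietsch (\cite[Section~5.1.1, Lemma~3]{pietsch}): if $T\notin\mathscr{A}(X)$ for $X=c_0$ or $\ell_p$ ($1\leqslant p<\infty$), then there exist $R,S\in\mathscr{B}(X)$ with $I_X=RTS$, and one checks directly that $T$ is bounded below on $Z:=\Ran(S)\cong X$. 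This is slicker---two lines instead of a gliding-hump sketch---and avoids any case distinction between reflexive $\ell_p$, $\ell_1$, and $c_0$. For $\ell_\infty$ the paper simply refers to \cite[p.~253]{laustsenloy}, which packages exactly the Rosenthal argument you outline. Your closing remark about Pitt's theorem is unnecessary: once $T\notin\mathscr{A}(X)$ yields a copy of $X$ on which $T$ is bounded below, nothing about other $\ell_q$'s is needed.
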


\begin{proof}
	We have $\mathscr{A}(X) \subseteq \mathscr{E}(X)$. As $X$ is complementably homogenous, by Lemma \ref{complhomideal}, $\mathscr{E}(X) \subseteq \mathscr{S}_X(X)$.
	Suppose first that $X= c_0$ or $X= \ell_p$, where $1 \leqslant p < \infty$. Let $T \in \mathscr{B}(X)$ be such that $T \notin \mathscr{A}(X)$. By \cite[Section~5.1.1, Lemma~3]{pietsch}, there exist $R,S \in \mathscr{B}(X)$ with $I_X = RTS$. As $R$ and $T$ are non-zero, it is immediate that $S$ is bounded below on $X$ and thus $Z:=\Ran(S) \cong X$. We observe that $T |_Z$ is bounded below. Indeed, let $z \in Z$ be arbitrary and pick $x \in X$ with $z=Sx$. Then, indeed,
	\begin{align}
	\|z\| = \|Sx\| \leqslant \|S\| \|x\| = \|S\| \| RTS x\| = \|S\| \| RTz\| \leqslant \|S\| \|R\| \|Tz\|.
	\end{align}
	This together with $Z \cong X$ yields $T \notin \mathscr{S}_X(X)$. Thus $\mathscr{S}_X(X) \subseteq \mathscr{A}(X)$.
	When $X= \ell_{\infty}$, this is explained in detail in \cite[page~253]{laustsenloy}.
\end{proof}

\section{Auxiliary results and the proofs of Theorem A \& B}

Let $X$ be a Banach space. An indexed collection $(x_i, f_i)_{i\in J}$ in $X\times X^*$ is a \emph{biorthogonal system}, whenever $\langle x_i, f_j \rangle = \delta_{i,j}$ for $i,j\in J$. A biorthogonal system $(x_i, f_i)_{i\in J}$ is an M-basis, whenever $\{x_i\colon i\in J\}$ is \emph{fundamental} (linearly dense in $X$) and $\{f_i\colon i\in J\}$ is \emph{total} (linearly weak*-dense in $X^*$). For a collection $\Phi := (f_j)_{j\in J}$ in $X^*$, the \textit{support} of $x \in X$ with respect to $\Phi$ is defined as
\[
{\support}_\Phi(x):= \{j\in J\colon \langle x, f_j \rangle \neq 0 \},
\]
however we usually drop the subscript $\Phi$ when the considered collection is clear from the context (for example, when there is a fixed M-basis for $X$). \smallskip 

In $c_0(\Gamma)$ and $\ell_p(\Gamma)$ for $p\in [1,\infty]$, by default, we consider the supports with respect to the evaluation functionals at points $\gamma\in \Gamma$; the notion of support defined in this way agrees with the definition of the support introduced earlier. The functionals themselves are coordinate functionals corresponding to the standard unit vector basis $(e_{\alpha})_{\alpha \in \Gamma}$ of either space, apart from $\ell_\infty(\Gamma)$.
When $(x_i, f_i)_{i\in J}$ is an M-basis for $X$, the collection $\Phi:=(f_j)_{j\in J}$ is \emph{countably supporting}, that is, the set ${\support}_{\Phi}(x)$ is countable for each $x\in X$. 

\begin{Prop}\label{disjointimage}
	Suppose that $X$ is a Banach space and $\kappa$ is an uncountable cardinal number. Let $(x_{\alpha})_{\alpha < \kappa}$ be a transfinite basic sequence in $X$ equivalent to the standard unit vector basis of $c_0(\kappa)$ or $\ell_p(\kappa)$, where $p \in (1, \infty)$. Let $Y$ be a Banach space that has an M-basis. If $T \in \mathscr{B}(X,Y)$ is non-zero, then there exists $\Lambda \subseteq \kappa$ with $|\Lambda | = \kappa$ such that $(Tx_{\alpha})_{\alpha \in \Lambda}$ consists of disjointly supported vectors.
\end{Prop}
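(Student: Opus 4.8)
The plan is to build the index set $\Lambda$ by transfinite recursion, exploiting the fact that the M-basis of $Y$ is countably supporting together with the "long" structure of $c_0(\kappa)$ or $\ell_p(\kappa)$. Fix an M-basis $(y_i, g_i)_{i \in J}$ for $Y$, so that $\support(y) := \support_{(g_i)}(y)$ is countable for every $y \in Y$. Since $T \neq 0$ and $(x_\alpha)_{\alpha < \kappa}$ is fundamental in its closed span (being a transfinite basic sequence), $Tx_\alpha \neq 0$ for at least one $\alpha$; in fact I first want to argue that $Tx_\alpha \neq 0$ for $\kappa$-many $\alpha$. This is where I use that $\kappa$ has uncountable cofinality — wait, the statement only assumes $\kappa$ uncountable, not regular; but note that any uncountable cardinal $\kappa$ satisfies $\cf(\kappa) \geqslant \omega_1$ only if it is regular, whereas singular cardinals can have countable cofinality. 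Hmm — actually every successor cardinal is regular, and $\cf(\kappa) > \omega$ fails only for singular cardinals of countable cofinality. So I should be careful: I will instead argue directly. The set $Z := \{\alpha < \kappa : Tx_\alpha = 0\}$ spans (together with a possible single "error") — if $|Z| = \kappa$ then $(x_\alpha)_{\alpha \in Z}$ is still a copy of the same long sequence space, and its closed span would be in $\ker T$; since the $x_\alpha$ for $\alpha \notin Z$ span a space of density $< \kappa$... this still needs $\cf(\kappa) > \omega$ or a cardinality argument. I think the cleanest route: suppose for contradiction $|\{\alpha : Tx_\alpha \neq 0\}| < \kappa$; then I want $T$ to vanish on the closed span of $(x_\alpha)_{\alpha \in Z}$, but that requires that finite linear combinations from $Z$-indices have image $0$, which is immediate by linearity, so $T$ vanishes on $\overline{\spanning}\{x_\alpha : \alpha \in Z\}$. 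Then $T$ factors through the quotient by this subspace. For $c_0(\kappa)$ and $\ell_p(\kappa)$ ($p > 1$), the quotient of the whole span by the span of a "co-small" subset of coordinates is again the long space (or has density $\kappa$), so $T$ restricted to the original span is determined by a map from a density-$\kappa$ space, yet we assumed only $< \kappa$ basis vectors survive — this forces a contradiction via a separable-saturation / density argument. I will phrase this more carefully in the write-up, but morally $|\{\alpha : Tx_\alpha \neq 0\}| = \kappa$.

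Now the core recursion. Having $\Lambda_0 \subseteq \kappa$ with $|\Lambda_0| = \kappa$ and $Tx_\alpha \neq 0$ for all $\alpha \in \Lambda_0$, I construct an increasing transfinite sequence of ordinals $(\alpha_\xi)_{\xi < \kappa}$ in $\Lambda_0$ such that the supports $\support(Tx_{\alpha_\xi})$ are pairwise disjoint. At stage $\xi < \kappa$, suppose $(\alpha_\eta)_{\eta < \xi}$ have been chosen. The set $S_\xi := \bigcup_{\eta < \xi} \support(Tx_{\alpha_\eta})$ is a union of $< \kappa$ countable sets. If $\cf(\kappa) > \omega$, Lemma \ref{cofinal} gives $|S_\xi| < \kappa$ directly when $|\xi| < \kappa$; for the general case (including singular $\kappa$ of countable cofinality) one has $|S_\xi| \leqslant |\xi| \cdot \omega < \kappa$ anyway since $|\xi| < \kappa$ and $\kappa$ is infinite, so $|S_\xi| < \kappa$ always — good, no regularity needed here. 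Now I claim there exists $\alpha \in \Lambda_0$, not among the previously chosen ones, with $\support(Tx_\alpha) \cap S_\xi = \emptyset$. Consider the biorthogonal functionals $g_j$ for $j \in S_\xi$; the map $R \colon X \to \ell_\infty(S_\xi)$ (or the appropriate sequence space) sending $x \mapsto (\langle Tx, g_j\rangle)_{j \in S_\xi}$ has separable-dimensional "relevant" structure — more to the point, I want $\{\alpha \in \Lambda_0 : \support(Tx_\alpha) \cap S_\xi \neq \emptyset\}$ to have size $< \kappa$. Suppose not; then $\kappa$-many vectors $Tx_\alpha$ have support meeting the fixed set $S_\xi$ of size $< \kappa$; by pigeonhole (again a cofinality/cardinality count: $\kappa$-many $\alpha$'s, each picking at least one element of a $<\kappa$-sized set) — here I do need that $\kappa$ is regular, or at least that a $<\kappa$-sized set cannot be "hit cofinally often" — hmm, if $\kappa$ is singular this pigeonhole can fail. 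Let me reconsider: I think the paper does intend $\kappa$ uncountable and the argument does go through, because the real claim is about the functionals being countably supporting on the \emph{domain} side, not just the range.

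Here is the fix that avoids all cofinality subtleties: use the M-basis of $X$ is not available, but instead use that $(x_\alpha)$ is a transfinite \emph{basic} sequence, so there are coefficient functionals $x_\alpha^* \in (\overline{\spanning}\{x_\alpha\})^*$. For each $j \in J$, the functional $T^* g_j$ restricted to $\overline{\spanning}\{x_\alpha\}$ lies in the dual of $c_0(\kappa)$ or $\ell_p(\kappa)$, hence in $\ell_1(\kappa)$ or $\ell_{p'}(\kappa)$, so it has \emph{countable} support in the $\alpha$-variable: the set $\{\alpha : \langle Tx_\alpha, g_j\rangle \neq 0\}$ is countable for each fixed $j$. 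Now for a fixed $S_\xi$ of size $< \kappa$, the set $\{\alpha : \support(Tx_\alpha) \cap S_\xi \neq \emptyset\} = \bigcup_{j \in S_\xi}\{\alpha : \langle Tx_\alpha, g_j \rangle \neq 0\}$ is a union of $< \kappa$ countable sets, hence of size $< \kappa$ (since $|S_\xi|\cdot \omega < \kappa$ for any infinite $\kappa$, no regularity needed). Therefore $\Lambda_0$ minus this bad set, minus the $<\kappa$ already-chosen ordinals, is non-empty, and we pick $\alpha_\xi$ from it. The recursion runs for all $\xi < \kappa$, and $\Lambda := \{\alpha_\xi : \xi < \kappa\}$ has $|\Lambda| = \kappa$ with $(Tx_\alpha)_{\alpha \in \Lambda}$ pairwise disjointly supported, as required.

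The main obstacle, and the place to be most careful in the write-up, is the very first step — showing $Tx_\alpha \neq 0$ for $\kappa$-many $\alpha$ — and more importantly establishing that for each fixed $j \in J$ the set $\{\alpha < \kappa : \langle Tx_\alpha, g_j\rangle \neq 0\}$ is countable; this is exactly the statement that an element of $(c_0(\kappa))^* = \ell_1(\kappa)$ or of $(\ell_p(\kappa))^* = \ell_{p'}(\kappa)$ (using $p > 1$ here — this is why $p=1$ is excluded from this proposition and handled separately) is countably supported, transported through the equivalence of $(x_\alpha)$ with the unit vector basis. Once that countable-support fact is in hand, the recursion is routine bookkeeping and no cardinal-arithmetic hypothesis beyond $\kappa$ uncountable is needed. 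I would state the countable-support lemma as an explicit sub-claim before launching the recursion.
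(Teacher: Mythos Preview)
Your argument is correct and takes a genuinely different route from the paper's.

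The paper argues by Zorn: take a maximal $\Lambda$ with $(Tx_\alpha)_{\alpha\in\Lambda}$ disjointly supported, assume $|\Lambda|<\kappa$, observe that the accumulated support $\Gamma=\bigcup_{\alpha\in\Lambda}\support(Tx_\alpha)$ then has size $<\kappa$, and by maximality every remaining $\alpha$ hits $\Gamma$. A pigeonhole (using $|\Gamma|\cdot\omega<\kappa$) produces a single $j_0\in\Gamma$ and uncountably many $\alpha$ with $\Re\langle Tx_\alpha,f_{j_0}\rangle\geqslant\varepsilon$; finally a harmonic-series estimate against $\sum n^{-1}x_{\alpha_n}$ contradicts the equivalence with the $c_0(\kappa)$ or $\ell_p(\kappa)$ basis. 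To run that last estimate the paper first upgrades the M-basis to one with uniformly bounded functionals via \cite[Theorem~5.13]{hajek}.

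Your approach instead observes directly that for each $j\in J$ the functional $g_j\circ T$, transported through the basis equivalence, lies in $\ell_1(\kappa)$ or $\ell_{p'}(\kappa)$ and hence has countable $\alpha$-support; this replaces the harmonic-series contradiction entirely, requires no passage to a bounded M-basis, and lets a straightforward transfinite recursion build $\Lambda$. It is arguably the cleaner argument and makes the role of the hypothesis $p>1$ (dual countably supported, which fails for $\ell_1$) more transparent.

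One minor simplification: your opening step---arguing that $Tx_\alpha\neq 0$ for $\kappa$-many $\alpha$---is unnecessary. Zero vectors have empty support and are disjoint from everything, so if $\{\alpha: Tx_\alpha=0\}$ already has size $\kappa$ you are done; otherwise your recursion runs unchanged without any preliminary thinning. Dropping this step removes the only place where your write-up wobbles.
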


\begin{proof}
	
	By \cite[Theorem~5.13]{hajek}, without loss of generality we may assume that $(b_j, f_j)_{j \in J}$ is an~M-basis for $Y$ with $\sup_{j \in J} \| f_j \| \leqslant K$ for some $K >0$.\smallskip
	
	For $\alpha < \kappa$, set $y_{\alpha} := Tx_{\alpha}$. By the Kuratowski--Zorn Lemma we can take a set $\Lambda \subseteq \lambda$ which is maximal with respect to the property that the vectors $y_{\alpha}$ and $y_{\beta}$ are disjointly supported for each distinct $\alpha, \beta \in \Lambda$. Assume towards a~contradiction that $|\Lambda| < \kappa$. Let $\Gamma := \textstyle{\bigcup_{\gamma \in \Lambda} \support(y_{\gamma})}$, then $\Gamma \subseteq J$ and $| \Gamma | \leqslant | \Lambda | \cdot \omega < \kappa$ as $\kappa$ is uncountable.\smallskip
	
	We \textit{claim} that for every $\alpha \in \kappa \setminus \Lambda$ there is $j \in \Gamma$ such that $\langle y_{\alpha}, f_j \rangle \neq 0$. Indeed, otherwise there is $\alpha_0 \in \kappa \setminus \Lambda$ such that for all $j \in \Gamma$ we have $\langle y_{\alpha_{0}}, f_j \rangle = 0$. Let $\beta \in \Lambda$, and let $j \in \support(y_{\beta})$. Then $j \in \Gamma$ and hence  we conclude from the above that $\langle y_{\alpha_{0}}, f_j \rangle = 0$, thus $\support(y_{\alpha_{0}}) \cap \support(y_{\beta}) = \varnothing$. Consequently, $\Lambda \subsetneq \Lambda \cup \{\alpha_0 \}$ and $y_{\alpha}, y_{\beta}$ are disjointly supported for any distinct $\alpha, \beta \in \Lambda \cup \{\alpha_0 \}$. This contradicts the maximality of $\Lambda$.\smallskip
	
	Combining the claim with $| \Gamma| \cdot \omega < \kappa$, we obtain that there is $j_0 \in \Gamma$ such that the set
	\[
	S:= \{ \alpha \in \kappa \setminus \Lambda \colon  \Re \langle y_{\alpha}, f_{j_{0}} \rangle  > 0  \}
	\]
	is uncountable. It follows that there is $\varepsilon \in (0,1)$ such that the set
	\[
	S':=\{ \alpha \in \kappa \setminus \Lambda \colon  \Re \langle y_{\alpha}, f_{j_{0}} \rangle  \geqslant  \varepsilon \}
	\]
	is uncountable.
	
	Let $(\alpha_n)_{n =1}^\infty$ be a sequence in $S'$. Then for all $N \in \mathbb{N}$:
	\begin{align}
	\varepsilon \cdot \ln(N+1) &\leqslant \sum_{n=1}^N n^{-1} \Re\langle y_{\alpha_n}, f_{j_0} \rangle  = \Re \left( \sum_{n=1}^N n^{-1} \langle y_{\alpha_n}, f_{j_0} \rangle \right) \leqslant  \left| \sum_{n=1}^N n^{-1} \langle y_{\alpha_n}, f_{j_0} \rangle \right| \notag \\
	&= \left| \Big\langle T \left( \sum_{n=1}^N n^{-1} x_{\alpha_n} \right), f_{j_0} \Big\rangle \right| \leqslant \| T \| \left\| \sum_{n=1}^N n^{-1} x_{\alpha_n} \right\| K.
	\end{align}
	This contradicts the fact that $(x_{\alpha})_{\alpha < \kappa}$ is equivalent to the standard unit vector basis of $c_0(\kappa)$ or $\ell_p(\kappa)$, where $1< p < \infty$. Thus $| \Lambda | = \kappa$ must hold.
\end{proof}

The following corollary is an analogue of Rosenthal's result \cite[Remark~1 on p.~30]{rosenthalmeasure}; it is stated in \cite[Corollary~3.3]{jksch} without a proof. For the convenience of the reader we present the details here.

\begin{Cor}\label{ellprosenthal}
	Suppose that $\lambda, \kappa$ are uncountable cardinals with $\lambda \geqslant  \kappa$ and $p \in (1, \infty)$. Let $(x_{\alpha})_{\alpha < \kappa}$ be a normalised, transfinite sequence in $\ell_p(\lambda)$, which is equivalent to the standard unit vector basis of $\ell_p(\kappa)$. For $T \in \mathscr{B}(\ell_p(\lambda))$, if $\inf \{ \|Tx_{\alpha} \|\colon \alpha < \kappa \} > 0$, then $T \notin \mathscr{S}_{\ell_p(\kappa)}(\ell_p(\lambda))$.
\end{Cor}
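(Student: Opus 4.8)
The plan is to deduce this almost immediately from Proposition~\ref{disjointimage} applied with $X = Y = \ell_p(\lambda)$. First I would record that $\ell_p(\lambda)$ carries an M-basis — namely its standard unit vector basis $(e_\gamma)_{\gamma < \lambda}$ together with the coordinate functionals, which for $p \in (1,\infty)$ are all of norm one — so the hypotheses of Proposition~\ref{disjointimage} are met by the given transfinite basic sequence $(x_\alpha)_{\alpha < \kappa}$. Since $\delta := \inf\{\|Tx_\alpha\| \colon \alpha < \kappa\} > 0$ and $\kappa \geqslant \omega$, there is some $\alpha$ with $Tx_\alpha \neq 0$, so $T \neq 0$. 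Proposition~\ref{disjointimage} then furnishes a set $\Lambda \subseteq \kappa$ with $|\Lambda| = \kappa$ such that the vectors $(Tx_\alpha)_{\alpha \in \Lambda}$ are pairwise disjointly supported, meaning here that they have pairwise disjoint supports as elements of $\ell_p(\lambda)$.

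Next I would set $W := \overline{\spanning}\{x_\alpha \colon \alpha \in \Lambda\}$. Because $(x_\alpha)_{\alpha < \kappa}$ is equivalent to the standard unit vector basis of $\ell_p(\kappa)$, the subfamily $(x_\alpha)_{\alpha \in \Lambda}$ is equivalent to the unit vector basis of $\ell_p(\Lambda) \cong \ell_p(\kappa)$; fix $C > 0$ with $\bigl\| \sum_{\alpha \in \Lambda} c_\alpha x_\alpha \bigr\| \leqslant C \bigl( \sum_{\alpha \in \Lambda} |c_\alpha|^p \bigr)^{1/p}$ for all finitely supported scalar families $(c_\alpha)_{\alpha \in \Lambda}$, so that $W \cong \ell_p(\kappa)$. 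It remains to check that $T|_W$ is bounded below. For a finitely supported $x = \sum_{\alpha \in \Lambda} c_\alpha x_\alpha \in W$, disjointness of the supports of the $Tx_\alpha$ in $\ell_p(\lambda)$ gives $\|Tx\|_p^p = \sum_{\alpha \in \Lambda} |c_\alpha|^p \|Tx_\alpha\|_p^p \geqslant \delta^p \sum_{\alpha \in \Lambda} |c_\alpha|^p \geqslant (\delta/C)^p \|x\|^p$. The same estimate extends to all $x \in W$ by continuity of $T$ and density of the finitely supported vectors (the series $\sum_{\alpha \in \Lambda} c_\alpha Tx_\alpha$ converges in $\ell_p(\lambda)$, again because of the disjoint supports). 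Hence $T|_W$ is an isomorphism onto its range and $W \cong \ell_p(\kappa)$, which is precisely the statement that $T \notin \mathscr{S}_{\ell_p(\kappa)}(\ell_p(\lambda))$.

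I do not expect any genuine obstacle here: the essential content has already been extracted in Proposition~\ref{disjointimage}, and what remains are two routine verifications — that $\ell_p(\lambda)$ has a norm-one-functional M-basis so the proposition applies verbatim, and that passing to a subfamily of size $\kappa$ of a sequence equivalent to the $\ell_p(\kappa)$-basis again spans an isomorphic copy of $\ell_p(\kappa)$.
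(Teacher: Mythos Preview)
Your argument is correct and follows essentially the same route as the paper. The only difference is that the paper applies Proposition~\ref{disjointimage} \emph{twice}: once with $T$ to make $(Tx_\alpha)_{\alpha\in\Lambda}$ disjointly supported, and once (with the identity) to arrange in addition that $(x_\alpha)_{\alpha\in\Lambda}$ are disjointly supported; then the normalised disjointly supported $(x_\alpha)_{\alpha\in\Lambda}$ are isometrically equivalent to the $\ell_p$-basis, so the final estimate reads $\|Ty\|\geqslant\varepsilon\|y\|$ without the equivalence constant $C$. Your single application plus the constant $C$ is an equally valid, slightly shorter variant yielding the weaker (but sufficient) bound $\delta/C$.
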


\begin{proof}
	Applying Proposition \ref{disjointimage} twice, we can take $\Lambda \subseteq \kappa$ with $| \Lambda | = \kappa$ such that both $(x_{\alpha})_{\alpha \in \Lambda}$ and $(Tx_{\alpha})_{\alpha \in \Lambda}$ consist of disjointly supported vectors. Let $\varepsilon \in (0,1)$ be such that $\| Tx_{\alpha} \| \geqslant \varepsilon$ for each $\alpha < \kappa$. Let $Z:= \overline{\spanning} \{ x_{\alpha} \colon  \alpha \in \Lambda\}$ and take $y \in Z$ arbitrary. Then
	\begin{align}
	\| Ty \|^p &= \left\| T \left( \sum\limits_{\alpha \in \Lambda} y(\alpha) x_{\alpha} \right) \right\|^p = \left\| \sum\limits_{\alpha \in \Lambda} y(\alpha) Tx_{\alpha} \right\|^p = \sum\limits_{\alpha \in \Lambda} \| y(\alpha) Tx_{\alpha} \|^p \notag \\
	&= \sum\limits_{\alpha \in \Lambda} | y(\alpha) |^p \| Tx_{\alpha} \|^p \geqslant \varepsilon^p \sum\limits_{\alpha \in \Lambda} | y(\alpha) |^p = \varepsilon^p \| y \|^p, 
	\end{align}
	hence $T |_Z$ is bounded below. As $Z \cong \ell_p(\kappa)$, the claim follows.
\end{proof}

\begin{Lem}\label{cardbound}
Let $\lambda$, $\kappa$ be cardinal numbers with $\lambda \geqslant \kappa$ and $\cf(\kappa) > \omega$. Consider one of the following cases: 
\begin{itemize}
    \item $E_{\lambda}:= \ell_p(\lambda)$ and $E_{\kappa}:= \ell_p(\kappa)$ for $p \in (1, \infty)$;
    \item $E_{\lambda}:= \ell_{\infty}^c(\lambda)$ and $E_{\kappa}:= c_0(\kappa)$;
    \item $E_{\lambda}:= c_0(\lambda)$ and $E_{\kappa}:= c_0(\kappa)$.
\end{itemize}
Then for any $T \in \mathscr{S}_{E_{\kappa}}(E_{\lambda})$, the cardinality of the set comprising those $\alpha < \lambda$ for which $Te_{\alpha} \neq 0$ is strictly less than $\kappa$.
\end{Lem}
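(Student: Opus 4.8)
\emph{Overall strategy.} The plan is to argue by contradiction. Suppose $T \in \mathscr{S}_{E_{\kappa}}(E_{\lambda})$ but the set $A := \{\alpha < \lambda \colon Te_{\alpha} \neq 0\}$ has cardinality at least $\kappa$. I will use this to build a closed subspace of $E_{\lambda}$ isomorphic to $E_{\kappa}$ on which $T$ is bounded below, contradicting $T \in \mathscr{S}_{E_{\kappa}}(E_{\lambda})$.

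\emph{Reduction to a uniform lower bound.} For $n \in \mathbb{N}$ set $A_n := \{\alpha \in A \colon \|Te_{\alpha}\| \geqslant 1/n\}$. Since every $Te_{\alpha}$ with $\alpha \in A$ is non-zero, $A = \bigcup_{n=1}^{\infty} A_n$, and because $\cf(\kappa) > \omega$ and $|A| \geqslant \kappa$, Lemma \ref{cofinal} forces $|A_n| \geqslant \kappa$ for some $n$. Fix such an $n$, put $\varepsilon := 1/n$, and pick $\Lambda \subseteq A_n$ with $|\Lambda| = \kappa$; relabelling $\Lambda$ by a bijection onto $\kappa$, the family $(e_{\alpha})_{\alpha \in \Lambda}$ is a normalised transfinite basic sequence in $E_{\lambda}$ that is isometrically equivalent to the standard unit vector basis of $E_{\kappa}$, with $\|Te_{\alpha}\| \geqslant \varepsilon$ for all $\alpha \in \Lambda$. (Note that $\cf(\kappa) > \omega$ makes $\kappa$, and hence $\lambda \geqslant \kappa$, uncountable.)

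\emph{The case $E_{\lambda} = \ell_p(\lambda)$.} Here the reduction above puts us in exactly the situation of Corollary \ref{ellprosenthal}: $(e_{\alpha})_{\alpha \in \Lambda}$ is a normalised transfinite sequence in $\ell_p(\lambda)$ equivalent to the unit vector basis of $\ell_p(\kappa)$, and $\inf\{\|Te_{\alpha}\| \colon \alpha \in \Lambda\} \geqslant \varepsilon > 0$. That corollary yields $T \notin \mathscr{S}_{\ell_p(\kappa)}(\ell_p(\lambda)) = \mathscr{S}_{E_{\kappa}}(E_{\lambda})$, a contradiction.

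\emph{The cases $E_{\lambda} = c_0(\lambda)$ and $E_{\lambda} = \ell_{\infty}^c(\lambda)$ (so $E_{\kappa} = c_0(\kappa)$).} The coordinate functionals $(e_{\gamma}^*)_{\gamma < \lambda}$ on $E_{\lambda}$ form a norm-one, total, countably supporting family; inspecting the proof of Proposition \ref{disjointimage} shows that these are the only properties of the dual family that the argument uses (fundamentality of the biorthogonal vectors is never invoked), which is important because for $\ell_{\infty}^c(\lambda)$ the vectors $(e_{\gamma})_{\gamma < \lambda}$ are \emph{not} fundamental and hence do not constitute an M-basis. Applying Proposition \ref{disjointimage} with $X = Y = E_{\lambda}$, the transfinite basic sequence $(e_{\alpha})_{\alpha \in \Lambda}$, the non-zero operator $T$, and supports taken with respect to $(e_{\gamma}^*)_{\gamma < \lambda}$, we obtain $\Lambda' \subseteq \Lambda$ with $|\Lambda'| = \kappa$ such that $(Te_{\alpha})_{\alpha \in \Lambda'}$ are pairwise disjointly supported. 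Set $Z := \overline{\spanning}\{e_{\alpha} \colon \alpha \in \Lambda'\}$, which is isometric to $c_0(\kappa)$. For $y = \sum_{\alpha \in \Lambda'} y(\alpha) e_{\alpha} \in Z$ (a sum with only countably many non-zero coefficients, tending to $0$), the series $\sum_{\alpha \in \Lambda'} y(\alpha) Te_{\alpha}$ converges in $E_{\lambda}$ to $Ty$, and since the summands are disjointly supported,
\[
\|Ty\| = \Big\| \sum_{\alpha \in \Lambda'} y(\alpha) Te_{\alpha} \Big\| = \sup_{\alpha \in \Lambda'} |y(\alpha)|\,\|Te_{\alpha}\| \geqslant \varepsilon \sup_{\alpha \in \Lambda'} |y(\alpha)| = \varepsilon \|y\|.
\]
Hence $T|_Z$ is bounded below and $Z \cong c_0(\kappa) = E_{\kappa}$, so $T \notin \mathscr{S}_{E_{\kappa}}(E_{\lambda})$, again a contradiction.

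\emph{Anticipated main obstacle.} The cofinality reduction and the final norm estimate are routine; the one point that needs care is the $\ell_{\infty}^c(\lambda)$ case, where one must verify that the disjointification of Proposition \ref{disjointimage} can be performed with respect to the coordinate functionals (rather than an arbitrary M-basis of the target), since it is precisely coordinatewise disjointness that makes the $c_0$-type identity $\|\sum y(\alpha) Te_{\alpha}\| = \sup_{\alpha} |y(\alpha)|\,\|Te_{\alpha}\|$ valid.
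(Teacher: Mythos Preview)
Your argument is correct and follows the same overall strategy as the paper: argue contrapositively, use the cofinality hypothesis together with Lemma~\ref{cofinal} to pass to a subset of size $\kappa$ on which $\|Te_\alpha\|$ is uniformly bounded below, and then invoke a Rosenthal-type result to conclude $T \notin \mathscr{S}_{E_\kappa}(E_\lambda)$. The only difference is in the $c_0$/$\ell_\infty^c$ cases: the paper simply cites \cite[Remark~1 on p.~30]{rosenthalmeasure} directly to obtain the subspace $F \cong c_0(\kappa)$ on which $T$ is bounded below, whereas you reprove this in-house by invoking Proposition~\ref{disjointimage} to disjointify the images and then computing the sup-norm explicitly. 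Your route is a little longer but more self-contained, and your observation that the proof of Proposition~\ref{disjointimage} only uses boundedness and countable supports of the dual family (not fundamentality of the vectors), so that it applies to $\ell_\infty^c(\lambda)$ via the coordinate functionals, is both correct and a genuinely useful remark, since $(e_\gamma)_{\gamma<\lambda}$ is indeed not an M-basis for $\ell_\infty^c(\lambda)$.
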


\begin{proof}
	Contrapositively, suppose that the set
	\[
	S:= \{\alpha <\lambda\colon \|Te_{\alpha} \| > 0 \}
	\]
	has cardinality at least $\kappa$. Set $S_n:= \{\alpha <\lambda\colon \|Te_{\alpha} \| \geqslant 1/n \}$ for every $n \in \mathbb{N}$. Then $S = \textstyle{\cup_{n=1}^\infty  S_n}$, thus by Lemma \ref{cofinal} there is $m \in \mathbb{N}$ such that $|S_m| \geqslant \kappa$. We may assume without loss of generality that $|S_m| = \kappa$. Consequently $\inf \{\|Te_{\alpha} \| \colon  \alpha \in S_m \} > 0$. \smallskip
	
	If $E_{\lambda} = \ell_p(\lambda)$ and $E_{\kappa} = \ell_p(\kappa)$ for $p \in (1, \infty)$, then Corollary \ref{ellprosenthal} implies $T \notin \mathscr{S}_{E_{\kappa}}(E_{\lambda})$. \smallskip
	
	If $E_{\lambda}= \ell_{\infty}^c(\lambda)$ and $E_{\kappa}= c_0(\kappa)$, or $E_{\lambda}= c_0(\lambda)$ and $E_{\kappa}= c_0(\kappa)$, then by \cite[Remark~1~on p.~30]{rosenthalmeasure} there is a closed subspace $F$ of $E_{\lambda}$ such that $F \cong c_0(\kappa)$ and $T |_F$ is bounded below. This is equivalent to saying that $T \notin \mathscr{S}_{E_{\kappa}}(E_{\lambda})$.
\end{proof}

We shall need the following result when dealing with the case $p=1$ case in the proof of Theorem \ref{kappasingingen} (\textit{cf.} \cite[first bullet point in the proof of Lemma~3.15]{jksch}). Let us first introduce the following notation:

Let $\lambda$ be an infinite cardinal and let $E_{\lambda}:= c_0(\lambda)$ or $E_{\lambda}:= \ell_{\infty}^c(\lambda)$ or $E_{\lambda} := \ell_p(\lambda)$, where $p \in [1, \infty)$. For $\Lambda \subseteq \lambda$ we define
\begin{align}
(P_{\Lambda}x)(\alpha) := \left\{
\begin{array}{l l}
x(\alpha) & \quad \text{if  } \alpha \in \Lambda \\
0 & \quad \text{otherwise} \\
\end{array} \right. \quad (x \in E_{\lambda}).
\end{align}
Clearly $P_{\Lambda} \in \mathscr{B}(E_{\lambda})$ is an idempotent with $\Ran(P_{\Lambda})$ isometrically isomorphic to $E_{| \Lambda |}$.

\begin{Lem}\label{ell1approx}
	Let $\lambda, \kappa$ be infinite cardinals with $\lambda \geqslant \kappa$. Let $S \in \mathscr{S}_{\ell_1(\kappa)}(\ell_1(\lambda))$. Then for every $\varepsilon \in (0,1)$ there is $\Gamma \subseteq \lambda$ with $|\Gamma | < \kappa$ such that $\| P_{\Gamma} S - S \| \leqslant \varepsilon$.
\end{Lem}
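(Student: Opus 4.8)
The plan is to establish the equivalent statement that
\[
c \;:=\; \inf\bigl\{\,\|P_\Gamma S - S\|\;:\;\Gamma\subseteq\lambda,\ |\Gamma|<\kappa\,\bigr\}\;=\;0,
\]
from which the lemma is immediate (pick $\Gamma$ with $|\Gamma|<\kappa$ and $\|P_\Gamma S - S\|<\varepsilon$). I shall use repeatedly that $P_\Gamma S - S = -P_{\lambda\setminus\Gamma}S$, and that — since the closed unit ball of $\ell_1(\lambda)$ is the closed absolutely convex hull of $\{e_\alpha:\alpha<\lambda\}$ — one has $\|P_{\lambda\setminus\Gamma}S\| = \sup_{\alpha<\lambda}\|P_{\lambda\setminus\Gamma}Se_\alpha\|_1$; so $c$ measures how well the columns $Se_\alpha$ can be \emph{simultaneously} captured by index sets of size $<\kappa$. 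Assume, towards a contradiction, that $c>0$, and fix $\delta\in(0,c/4)$. The case $\kappa=\omega$ is exceptional and is dealt with at the end.

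The decisive step is a normalisation. Choose $\Gamma_0\subseteq\lambda$ with $|\Gamma_0|<\kappa$ and $\|P_{\lambda\setminus\Gamma_0}S\|<c+\delta$, and set $S_0:=P_{\lambda\setminus\Gamma_0}S$. Since the $P_\Lambda$ are commuting coordinate projections with $P_{\lambda\setminus\Gamma}P_{\lambda\setminus\Gamma_0}=P_{\lambda\setminus(\Gamma\cup\Gamma_0)}$, and $|\Gamma\cup\Gamma_0|<\kappa$ whenever $|\Gamma|<\kappa$, we get $\|P_{\lambda\setminus\Gamma}S_0\|\geqslant c$ for all such $\Gamma$; in particular $\|S_0\|\in[c,c+\delta)$, so every column satisfies $\|S_0e_\alpha\|_1<c+\delta$. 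Also $S_0\in\mathscr{S}_{\ell_1(\kappa)}(\ell_1(\lambda))$: if $S_0$ were bounded below on some closed $W\cong\ell_1(\kappa)$, then for $w\in W$ the vectors $S_0w$ (supported in $\lambda\setminus\Gamma_0$) and $P_{\Gamma_0}Sw$ (supported in $\Gamma_0$) are disjointly supported, so $\|Sw\|_1\geqslant\|S_0w\|_1$ and $S$ would be bounded below on $W$, contradicting $S\in\mathscr{S}_{\ell_1(\kappa)}(\ell_1(\lambda))$. The point is that $S_0$ now enjoys a uniform tail-norm lower bound $c$ \emph{and} has all of its columns of norm within $\delta$ of $c$; this is exactly what will let us keep the error terms below under control. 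Without such a normalisation the columns of $S$ might have norm far larger than $c$, and then the error estimate below collapses — so this is the main obstacle.

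Now perform a transfinite disjointification. Recursively pick $\alpha_\xi<\lambda$ ($\xi<\kappa$): given $(\alpha_\eta)_{\eta<\xi}$, set $\Gamma_{<\xi}:=\bigcup_{\eta<\xi}\support(S_0e_{\alpha_\eta})$, a union of $|\xi|<\kappa$ countable sets, so $|\Gamma_{<\xi}|\leqslant|\xi|\cdot\aleph_0<\kappa$ (this is where $\kappa>\omega$ enters). As $\|P_{\lambda\setminus\Gamma_{<\xi}}S_0\|\geqslant c>c-\delta$, choose $\alpha_\xi$ with $\|w_\xi\|_1>c-\delta$, where $w_\xi:=P_{\lambda\setminus\Gamma_{<\xi}}S_0e_{\alpha_\xi}$; with $r_\xi:=P_{\Gamma_{<\xi}}S_0e_{\alpha_\xi}$ we have $S_0e_{\alpha_\xi}=w_\xi+r_\xi$ (disjoint supports), so $\|r_\xi\|_1=\|S_0e_{\alpha_\xi}\|_1-\|w_\xi\|_1<(c+\delta)-(c-\delta)=2\delta$. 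For $\eta<\xi$, $\support(w_\eta)\subseteq\support(S_0e_{\alpha_\eta})\subseteq\Gamma_{<\xi}$, so the $w_\xi$ are pairwise disjointly supported; being non-zero, they force the $\alpha_\xi$ to be pairwise distinct, and $Z:=\overline{\spanning}\{e_{\alpha_\xi}:\xi<\kappa\}$ is isometrically $\ell_1(\kappa)$. For finitely supported $x=\sum_\xi a_\xi e_{\alpha_\xi}\in Z$, disjointness of the $w_\xi$ yields $\bigl\|\sum_\xi a_\xi w_\xi\bigr\|_1=\sum_\xi|a_\xi|\,\|w_\xi\|_1>(c-\delta)\|x\|_1$, hence
\[
\|S_0x\|_1\;\geqslant\;\Bigl\|\sum_\xi a_\xi w_\xi\Bigr\|_1-\sum_\xi|a_\xi|\,\|r_\xi\|_1\;>\;(c-\delta)\|x\|_1-2\delta\,\|x\|_1\;=\;(c-3\delta)\|x\|_1 ,
\]
and $c-3\delta>0$. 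By density $S_0$ is bounded below on $Z\cong\ell_1(\kappa)$, contradicting $S_0\in\mathscr{S}_{\ell_1(\kappa)}(\ell_1(\lambda))$. Thus $c=0$, as wanted.

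It remains to treat $\kappa=\omega$, where the recursion above fails because finite unions of countable supports need not be finite. This case is classical: every infinite-dimensional closed subspace of $\ell_1(\lambda)$ contains an isomorphic copy of $\ell_1$, so $\mathscr{S}_{\ell_1(\omega)}(\ell_1(\lambda))$ coincides with the strictly singular operators, which on $\ell_1(\lambda)$ are exactly the compact ones (the restriction of such an operator to $\ell_1(A)$ for countable $A$ is strictly singular out of $\ell_1$, hence compact, and an operator all of whose such restrictions are compact has relatively compact image of the unit ball); and a relatively compact subset of $\ell_1(\lambda)$ has uniformly small tails, which is the assertion for $\kappa=\omega$.
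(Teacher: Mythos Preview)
Your argument is correct and takes a genuinely different route from the paper's. The paper argues the contrapositive directly for all $\kappa$ at once: assuming $\|P_{\lambda\setminus\Gamma}S\|>\varepsilon$ whenever $|\Gamma|<\kappa$, it uses the Kuratowski--Zorn Lemma to build a maximal family of pairs $(x,E)$ with $x$ a unit vector, $E\subseteq\lambda$ finite, $\|Sx|_E\|\geqslant\varepsilon/2$, and the sets $E$ pairwise disjoint; maximality forces this family to have cardinality at least $\kappa$, and then Rosenthal's results \cite[Propositions~3.1 and~3.2]{rosenthalmeasure} are invoked to extract an $\ell_1(\kappa)$-subspace on which $S$ is bounded below. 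By contrast, you work entirely with standard basis vectors in the domain (exploiting that the $\ell_1$-operator norm is the supremum of the column norms), and your normalisation step---replacing $S$ by $S_0=P_{\lambda\setminus\Gamma_0}S$ so that every column of $S_0$ has norm within $\delta$ of $c$---is exactly what lets the elementary disjoint-support estimate go through without appealing to Rosenthal. The price you pay is the case split: the paper's Zorn argument handles $\kappa=\omega$ uniformly (since the sets $E$ are \emph{finite}, so finite unions of them stay finite), whereas your recursion accumulates countable supports and breaks down at $\kappa=\omega$, forcing the separate compactness argument. Both approaches are clean; yours is more self-contained, the paper's more uniform but reliant on an external combinatorial lemma.
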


\begin{proof}
	We prove the statement contrapositively. Assume that there is $\varepsilon \in (0,1)$ such that $\| P_{\lambda \setminus \Gamma} S \| = \| P_{\Gamma} S - S \| > \varepsilon$ for every $\Gamma \subseteq \lambda$ with $| \Gamma | < \kappa$. Let us define the sets
	\begin{align}
	\mathscr{Z} &:= \left\{ H \in \mathscr{P}(\ell_1(\lambda) \times \mathscr{P}(\lambda)) \colon  \forall (x, E) \in H\colon  \|x\| =1, \, |E| < \infty, \, \| Sx|_E \| \geqslant \varepsilon/2 \right\} \notag \\
	\mathscr{Y} &:= \left\{ H \in \mathscr{Z} \colon  \forall (x, E), (y,F) \in H\colon  (E \neq F \implies E \cap F = \varnothing) \right\},
	\end{align}
	and consider $\mathscr{Y}$ with the ordering given by set-theoretic containment. It is clear that every chain in $\mathscr{Y}$ has an upper bound in $\mathscr{Y}$, hence by the Kuratowski--Zorn Lemma there is a maximal element $M \in \mathscr{Y}$. We \textit{claim} that $|M| \geqslant \kappa$.\smallskip
	
	Assume towards a contradiction that $|M| < \kappa$. Let $\Gamma:= \bigcup_{(x, E) \in M} E$. Then $\Gamma \subseteq \lambda$ with $|\Gamma| < \kappa$. Indeed, $E$ is a finite subset of $\lambda$ for each $(x,E) \in M$, hence if $\kappa = \omega$ then $\Gamma$ is finite; if $\kappa$ is uncountable then $| \Gamma | \leqslant |M| \cdot \omega < \kappa$. By the assumption $\| P_{\lambda \setminus \Gamma} S \| > \varepsilon$, there is $y \in \ell_1(\lambda)$ with $\|y\| =1$ and $\sum_{\alpha \in \lambda \setminus \Gamma} |(Sy)(\alpha)| = \| P_{\lambda \setminus \Gamma} S y \| > \varepsilon$. As $\support(Sy)$ is countable, there is a finite set $F \subseteq \support(Sy) \cap (\lambda \setminus \Gamma)$ such that $\| Sy |_{F} \| = \sum_{\alpha \in F} |(Sy)(\alpha)| \geqslant \varepsilon /2$. From $F \subseteq \lambda \setminus \Gamma$ we see that $F \cap E = \varnothing$ for each $(x, E) \in M$. Thus $M \subsetneq M \cup \{(y,F)\} \in \mathscr{Y}$, which contradicts the maximality of $M$ in $\mathscr{Y}$. Hence $|M| \geqslant \kappa$ must hold.\smallskip
	
	Let $(x_{\alpha},E_{\alpha})_{\alpha \in \Lambda}$ be a collection in $M$ with $\Lambda \subseteq \lambda$ and $|\Lambda | = \kappa$. As $\| Sx_{\alpha} |_{E_{\alpha}} \| \geqslant \varepsilon /2$ for each $\alpha \in \Lambda$, it follows from \cite[Propositions~3.2 and 3.1]{rosenthalmeasure} that there is $\Lambda' \subseteq \Lambda$ with $| \Lambda' | = | \Lambda | = \kappa$ such that $X:= \overline{\spanning} \{x_{\alpha}\colon  \alpha \in \Lambda' \} \cong \ell_1(\kappa)$ and $S |_X$ is bounded below. This yields $S \notin \mathscr{S}_{\ell_1(\kappa)}(\ell_1(\lambda))$, as required.
\end{proof}

We recall that it is shown in \cite[Theorem~1.3, Proposition~3.9]{jksch} and \cite[Proposition~2.8]{acgjm} that for every uncountable cardinal $\lambda$ the Banach spaces $\ell_{\infty}^c(\lambda)$, $\ell_p(\lambda)$ ($1 \leqslant p <\infty$), and $c_0(\lambda)$ are complementably homogenous. In fact, the following formally stronger results hold, \textit{cf.} \cite[Proposition~3.9 and Remark~3.11]{jksch}:
{
\begin{Prop}\label{complhomextra}
	Let $\lambda, \kappa$ be infinite cardinals with $\lambda \geqslant \kappa$. Consider one of the following cases:
	\begin{itemize}
	    \item $E_{\lambda}:= \ell_p(\lambda)$ and $E_{\kappa}:= \ell_p(\kappa)$ for $p \in [1, \infty)$;
	    \item $E_{\lambda}:= \ell_{\infty}^c(\lambda)$ and $E_{\kappa}:= \ell_{\infty}^c(\kappa)$;
	    \item $E_{\lambda}:= c_0(\lambda)$ and $E_{\kappa}:= c_0(\kappa)$.
	\end{itemize} If $Y$ is a closed subspace of $E_{\lambda}$ with $Y \cong E_{\kappa}$, then there exists a complemented subspace $X$ of $E_{\lambda}$ with $X \subseteq Y$ such that $X \cong E_{\kappa}$. In the latter case, $Y$ is already complemented in $E_\lambda$.
\end{Prop}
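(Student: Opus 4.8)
The plan is to reduce, in each of the three cases, to the situation in which a copy of $E_\kappa$ sitting inside $Y$ is spanned by a pairwise disjointly supported family of vectors of $E_\lambda$, and then to exploit that the closed linear span of such a family, with norms bounded away from $0$ and $\infty$, is a complemented subspace of $E_\lambda$ which is moreover isometric to $E_\kappa$. Fix an isomorphism $U\colon E_\kappa \to Y$ and put $y_\alpha := Ue_\alpha$ for $\alpha < \kappa$, where $(e_\alpha)_{\alpha<\kappa}$ is the standard unit vector basis of $E_\kappa$; then $(y_\alpha)_{\alpha<\kappa}$ is a transfinite basic sequence in $E_\lambda$ equivalent to the unit vector basis of $E_\kappa$, with $\|U^{-1}\|^{-1} \leqslant \|y_\alpha\| \leqslant \|U\|$ for all $\alpha$. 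When $\kappa = \omega$ the space $Y$ is separable, hence contained in $\Ran P_A$ for some countable $A \subseteq \lambda$, a copy of $\ell_p$ or $c_0$; the classical structure theory of these prime spaces (Pe\l czy\'nski's decomposition method, and Sobczyk's theorem in the $c_0$ case) produces a copy of $E_\kappa$ inside $Y$ complemented in $\Ran P_A$ (and shows $Y$ itself complemented there when $E_\lambda = c_0(\lambda)$), while $\Ran P_A$ is $1$-complemented in $E_\lambda$ via $P_A$. So assume henceforth that $\kappa$ is uncountable (which is in any case forced in the $\ell_\infty^c$ case by the standing conventions).

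For $E_\lambda = c_0(\lambda)$ and for $E_\lambda = \ell_p(\lambda)$ with $p \in (1,\infty)$, I would apply Proposition~\ref{disjointimage} directly, with the ambient space $E_\lambda$ carrying its standard unit vector M-basis; this yields $\Lambda \subseteq \kappa$ with $|\Lambda| = \kappa$ such that $(y_\alpha)_{\alpha\in\Lambda}$ is pairwise disjointly supported. For $E_\lambda = \ell_1(\lambda)$ Proposition~\ref{disjointimage} is unavailable, so instead I would run the Kuratowski--Zorn/gliding-hump argument underlying Lemma~\ref{ell1approx}: extract $\Lambda \subseteq \kappa$ with $|\Lambda| = \kappa$ together with pairwise disjoint finite sets $E_\alpha \subseteq \support(y_\alpha)$ satisfying $\inf_{\alpha\in\Lambda}\|y_\alpha|_{E_\alpha}\| > 0$, and then invoke \cite[Propositions~3.1 and 3.2]{rosenthalmeasure} to pass to $\Lambda' \subseteq \Lambda$ of full cardinality along which $\overline{\spanning}\{y_\alpha : \alpha\in\Lambda'\} \cong \ell_1(\kappa)$ is complemented in $\ell_1(\lambda)$. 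In either of these cases the subspace $X := \overline{\spanning}\{y_\alpha : \alpha\in\Lambda\}$ (respectively over $\Lambda'$) lies in $Y$, since $y_\alpha = Ue_\alpha \in Y$ and $Y$ is closed; it is isomorphic to $E_\kappa$, being the $U$-image of $\overline{\spanning}\{e_\alpha : \alpha\in\Lambda\} = \Ran P_\Lambda \cong E_\kappa$; and it is complemented in $E_\lambda$, because a disjointly supported family with norms bounded away from $0$ and $\infty$ admits a bounded coordinate projection onto its closed span, built from the natural disjointly supported biorthogonal functionals (the normalised duality mappings of the $y_\alpha$ in $\ell_p(\lambda)$, evaluation at a norming coordinate in $c_0(\lambda)$), that span being in fact isometric to $E_\kappa$. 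This settles the first assertion in the $\ell_p$ and $c_0$ cases.

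The case $E_\lambda = \ell_\infty^c(\lambda)$ is where I expect the main obstacle to lie: $\ell_\infty^c(\lambda)$ is not weakly compactly generated, and the $U$-image of the unit vectors spans only a copy of $c_0(\kappa)$ inside $Y$ rather than all of it, so Proposition~\ref{disjointimage} does not by itself deliver a disjoint copy of $\ell_\infty^c(\kappa)$. Here one has to use the finer structural analysis of $\ell_\infty^c(\lambda)$ from \cite{jksch}: working with the countably supported vectors of $Y$ to pin down $\Gamma \subseteq \lambda$ of cardinality $\kappa$ with $\Ran P_\Gamma \cong \ell_\infty^c(\kappa)$ inside which, after suitable perturbations, a complemented copy of $\ell_\infty^c(\kappa)$ is located within $Y$; this is \cite[Theorem~1.2 and Proposition~3.9]{jksch}, and the delicate cardinality/support bookkeeping there, in the absence of a long unconditional basis, is the heart of the matter. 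Finally, the supplementary claim that in the $c_0$ case $Y$ itself (not merely a subspace) is complemented in $c_0(\lambda)$ is the theorem that every isomorphic copy of $c_0(I)$ inside $c_0(J)$ is complemented; see \cite[Remark~3.11]{jksch} together with \cite[Proposition~2.8]{acgjm}.
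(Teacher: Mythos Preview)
Your argument is correct, but it takes a markedly different---more constructive---route from the paper's. The paper simply cites \cite[Proposition~3.9 and Remark~3.11]{jksch} for the $\ell_p$ and $\ell_\infty^c$ cases and proves only the $c_0(\lambda)$ case, and there by a much shorter device than yours: since every vector in $c_0(\lambda)$ has countable support and $Y\cong c_0(\kappa)$ has density $\kappa$, one finds $\Lambda\subseteq\lambda$ with $|\Lambda|=\kappa$ and $Y\subseteq c_0(\Lambda)$; then \cite[Proposition~2.8]{acgjm} gives $Y$ complemented in $c_0(\Lambda)\cong c_0(\kappa)$, hence in $c_0(\lambda)$. This establishes the stronger supplementary claim directly, and the existence of a complemented $X\subseteq Y$ follows trivially by taking $X=Y$.

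Your approach instead re-derives the $\ell_p$ ($p>1$) and $c_0$ cases from inside the paper, applying Proposition~\ref{disjointimage} with $T=I_{E_\lambda}$ to pass to a disjointly supported subfamily of $(y_\alpha)$ and then building an explicit bounded projection onto its closed span. This buys partial self-containment and makes the mechanism transparent, at the cost of length; note, however, that for the supplementary $c_0$ claim you still fall back on \cite{acgjm}/\cite{jksch}, so the saving is limited. Your $\ell_1$ sketch via Rosenthal's Propositions~3.1--3.2 is sound in outline, but the Zorn step as you phrase it needs care: the threshold for $\|y_\alpha|_{E_\alpha}\|$ must be chosen relative to the basis-equivalence constant of $(y_\alpha)$, not merely $\inf_\alpha\|y_\alpha\|/2$, since a common heavy part shared by all $y_\alpha$ (supported on a set of size $<\kappa$) could otherwise block the extraction. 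For $\ell_\infty^c(\lambda)$ both you and the paper defer to \cite{jksch}.
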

\begin{proof}
	We only need to show the statement for $c_0(\lambda)$, the other cases are covered in \cite[Proposition~3.9 and Remark~3.11]{jksch}.\smallskip

Let $Y$ be a closed subspace of $c_0(\lambda)$ such that $Y \cong c_0(\kappa)$. There is a set $\Lambda \subseteq \lambda$ with $| \Lambda | = \kappa$ such that $Y \subseteq c_0({\Lambda})$. As $Y \cong c_0(\kappa) \cong c_0(\Lambda)$, it follows from \cite[Proposition~2.8]{acgjm} that $Y$ is complemented in $c_0(\Lambda)$. As the latter space is complemented in $c_0(\lambda)$, the conclusion follows.
\end{proof}
}

The proposition above implies a convenient corollary. Before we state it, let us remind the reader that it is proved in \cite[Theorem~3.14]{jksch} that for infinite cardinals $\lambda \geqslant \kappa$, the ideals of $\ell_{\infty}^c(\kappa)$-singular and $c_0(\kappa)$-singular operators on $\ell_{\infty}^c(\lambda)$ coincide. This is, $$\mathscr{S}_{\ell_{\infty}^c(\kappa)}(\ell_{\infty}^c(\lambda)) = \mathscr{S}_{c_0(\kappa)}(\ell_{\infty}^c(\lambda)).$$
We shall implicitly use this fact in the subsequent sections.

\begin{Cor}\label{corcomplhom}
		Let $\lambda, \kappa$ be infinite cardinals with $\lambda \geqslant \kappa$. Consider one of the following cases:
	\begin{itemize}
	    \item $E_{\lambda}:= \ell_p(\lambda)$ and $E_{\kappa}:= \ell_p(\kappa)$ for $p \in [1, \infty)$;
	    \item $E_{\lambda}:= \ell_{\infty}^c(\lambda)$ and $E_{\kappa}:= \ell_{\infty}^c(\kappa)$;
	    \item $E_{\lambda}:= c_0(\lambda)$ and $E_{\kappa}:= c_0(\kappa)$.
	\end{itemize}  Let $T \in \mathscr{B}(E_{\lambda})$ be such that $T \notin \mathscr{S}_{E_{\kappa}}(E_{\lambda})$. Then there is a closed subspace $E$ of $E_{\lambda}$ such that $E \cong E_{\kappa}$, $T |_{E}$ is bounded below, and $\Ran(T |_{E})$ is complemented in $E_{\lambda}$.
\end{Cor}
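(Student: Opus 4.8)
The plan is to deduce the statement from Proposition \ref{complhomextra} applied to the range of a suitable restriction of $T$. Since $T \notin \mathscr{S}_{E_{\kappa}}(E_{\lambda})$, unwinding the definition produces a closed subspace $W$ of $E_{\lambda}$ with $W \cong E_{\kappa}$ such that $T|_W$ is bounded below. I would write $T_1 := (T|_W)|^{\Ran(T|_W)} \in \mathscr{B}(W, \Ran(T|_W))$ for the corestriction of $T|_W$ to its image; as $T|_W$ is bounded below, its range is closed and $T_1$ is an isomorphism, so in particular $\Ran(T|_W) \cong W \cong E_{\kappa}$.

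Next I would apply Proposition \ref{complhomextra} to the closed subspace $\Ran(T|_W)$ of $E_{\lambda}$, which is isomorphic to $E_{\kappa}$: this yields a complemented subspace $X$ of $E_{\lambda}$ with $X \subseteq \Ran(T|_W)$ and $X \cong E_{\kappa}$. Setting $E := T_1^{-1}(X)$, a closed subspace of $W$ and hence of $E_{\lambda}$, one checks the three required properties: $E \cong X \cong E_{\kappa}$ because $T_1$ is an isomorphism; $T|_E$ is bounded below because it agrees with the bounded-below map $T_1$ up to corestriction; and $\Ran(T|_E) = T_1(E) = X$ (here one uses that $T_1$ is a bijection onto $\Ran(T|_W) \supseteq X$), which is complemented in $E_{\lambda}$ by the choice of $X$.

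There is no genuine obstacle here: the entire content sits in Proposition \ref{complhomextra}, which says precisely that a copy of $E_{\kappa}$ inside $E_{\lambda}$ can be shrunk to a complemented copy of $E_{\kappa}$. The only points requiring a little care are the bookkeeping facts that pulling $X$ back through $T_1$ preserves the isomorphism type $E_{\kappa}$ and that $\Ran(T|_E)$ equals $X$ exactly rather than a proper subspace of it. In the case $E_\lambda = \ell_\infty^c(\lambda)$ one invokes Proposition \ref{complhomextra} with $E_\kappa = \ell_\infty^c(\kappa)$ exactly as in the statement, so no identification of singularity classes is needed for the proof itself.
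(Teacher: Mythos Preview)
Your proof is correct and essentially identical to the paper's: both pick a subspace on which $T$ is bounded below, apply Proposition~\ref{complhomextra} to its image to get a complemented copy $X\cong E_\kappa$, and pull $X$ back through the isomorphism to obtain $E$. The only cosmetic difference is that the paper writes $E = E' \cap T^{-1}[E'']$ where you write $E = T_1^{-1}(X)$, but these are the same subspace.
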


\begin{proof}
	By the hypothesis, there is a closed subspace $E'$ of $E_{\lambda}$ such that $E' \cong E_{\kappa}$ with $T |_{E'}$ bounded below. In particular, $\Ran(T |_{E'}) \cong E' \cong E_{\kappa}$, thus Proposition \ref{complhomextra} yields a complemented subspace $E^{''}$ of $E_{\lambda}$ with $E^{''} \subseteq \Ran(T |_{E'})$ such that $E^{''} \cong E_{\kappa}$. Set $E := E' \cap T^{-1}[E^{''}]$, which is clearly a closed subspace of $E_{\lambda}$. We \textit{claim} that $T |_{E}^{E^{''}} \in \mathscr{B}(E,E^{''})$ is an isomorphism. Clearly $T |_{E}^{E^{''}}$ is injective, in fact it is bounded below, since $E \subseteq E'$ and $T |_{E'}$ is already bounded below. As $E^{''} \subseteq \Ran(T |_{E'})$, the operator $T |_{E}^{E^{''}}$ is surjective. From the claim we conclude that $\Ran(T |_{E}) = \Ran(T |_{E}^{E^{''}}) = E''$ is complemented in $E_{\lambda}$ and isomorphic to $E_{\kappa}$.
\end{proof}

The next result is proved for spaces of the form $\ell_{\infty}^c(\lambda)$ in \cite[Lemma~3.17]{jksch}, however its counterpart for spaces of the form $\ell_p(\lambda)$ neither is explicitly stated nor proved therein, even though it is certainly known to the authors as it is implicitly used in the proof of \cite[Theorem~1.5]{jksch}. For the sake of completeness we include a proof.

\begin{Thm}\label{kappasingingen}
	Let $\lambda, \kappa$ be infinite cardinals with $\lambda \geqslant \kappa$. Consider one of the following cases:
	\begin{itemize}
	    \item $E_{\lambda}:= \ell_p(\lambda)$ and $E_{\kappa}:= \ell_p(\kappa)$ for $p \in [1, \infty)$;
	    \item $E_{\lambda}:= \ell_{\infty}^c(\lambda)$ and $E_{\kappa}:= \ell_{\infty}^c(\kappa)$;
	    \item $E_{\lambda}:= c_0(\lambda)$ and $E_{\kappa}:= c_0(\kappa)$.
	\end{itemize}  Let $T \in \mathscr{B}(E_{\lambda})$ be such that $T \notin \mathscr{S}_{E_{\kappa}}(E_{\lambda})$. Then $\mathscr{S}_{E_{\kappa^+}}(E_{\lambda})$ is contained in the closed, two-sided ideal generated by $T$.
\end{Thm}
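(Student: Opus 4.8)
The plan is to reduce the statement to the claim that the closed two-sided ideal $\mathscr{J}_T$ generated by $T$ contains every operator on $E_\lambda$ that \emph{factors through} $E_\kappa$ — meaning every operator of the form $AB$ with $A \in \mathscr{B}(E_\kappa, E_\lambda)$ and $B \in \mathscr{B}(E_\lambda, E_\kappa)$ — and then to show that every operator in $\mathscr{S}_{E_{\kappa^+}}(E_\lambda)$ is a norm-limit of operators of this kind. As $\mathscr{J}_T$ is norm-closed, this suffices.

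For the reduction, apply Corollary~\ref{corcomplhom} to the hypothesis $T \notin \mathscr{S}_{E_\kappa}(E_\lambda)$: it provides a closed subspace $E$ of $E_\lambda$ with $E \cong E_\kappa$, with $T|_E$ bounded below, and with $\Ran(T|_E)$ equal to the range of some idempotent $R \in \mathscr{B}(E_\lambda)$. Let $J \in \mathscr{B}(E_\kappa, E_\lambda)$ be an isomorphism of $E_\kappa$ onto $E$, viewed as an operator into $E_\lambda$; then $TJ$ is bounded below with range $\Ran(T|_E) = \Ran(R)$, so the corestriction of $TJ$ to $\Ran(T|_E)$ is invertible, and if $U \in \mathscr{B}(E_\lambda, E_\kappa)$ denotes the inverse of this corestriction precomposed with $R$, one checks at once that $U T J = \id_{E_\kappa}$. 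Consequently, whenever $W = AB$ as above, $W = A(UTJ)B = (AU)\,T\,(JB)$ with $AU, JB \in \mathscr{B}(E_\lambda)$, so $W \in \mathscr{J}_T$. This also settles the degenerate case $\lambda = \kappa$ outright, since then every operator on $E_\kappa = E_\lambda$ factors through $E_\kappa$ trivially and hence $\mathscr{S}_{E_{\kappa^+}}(E_\lambda) \subseteq \mathscr{B}(E_\lambda) = \mathscr{J}_T$; so from now on assume $\lambda \geqslant \kappa^+$.

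Now fix $S \in \mathscr{S}_{E_{\kappa^+}}(E_\lambda)$. If $E_\kappa = c_0(\kappa)$ or $E_\kappa = \ell_p(\kappa)$ with $p \in (1,\infty)$, apply Lemma~\ref{cardbound} with $\kappa^+$ playing the role of its $\kappa$ — legitimate since $\kappa^+$ is a successor cardinal, hence regular, so $\cf(\kappa^+) > \omega$ — to get that $\Gamma := \{\alpha < \lambda : Se_\alpha \neq 0\}$ has cardinality $< \kappa^+$, i.e.\ $|\Gamma| \leqslant \kappa$. Since every vector of $c_0(\lambda)$ and of $\ell_p(\lambda)$ is the norm-convergent sum of its coordinate components and $Se_\alpha = 0$ for $\alpha \notin \Gamma$, it follows that $S = SP_\Gamma$; moreover $SP_\Gamma$ factors through $\Ran(P_\Gamma)$, which is isometrically isomorphic to $E_{|\Gamma|}$ and, after relabelling $\Gamma$ inside $\kappa$, appears as a complemented subspace $\Ran(P_\Lambda)$ of $E_\kappa$ for a suitable $\Lambda \subseteq \kappa$ with $|\Lambda| = |\Gamma|$. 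Hence $S$ factors through $E_\kappa$ and lies in $\mathscr{J}_T$. If $E_\kappa = \ell_1(\kappa)$, Lemma~\ref{cardbound} is unavailable, so instead invoke Lemma~\ref{ell1approx} with $\kappa^+$ in the role of its $\kappa$: for every $\varepsilon \in (0,1)$ there is $\Gamma \subseteq \lambda$ with $|\Gamma| < \kappa^+$, hence $|\Gamma| \leqslant \kappa$, such that $\|P_\Gamma S - S\| \leqslant \varepsilon$; as $P_\Gamma S$ has range inside $\Ran(P_\Gamma) \cong \ell_1(|\Gamma|)$, which embeds complementably into $\ell_1(\kappa)$, the operator $P_\Gamma S$ factors through $\ell_1(\kappa)$ and lies in $\mathscr{J}_T$, and letting $\varepsilon \downarrow 0$ yields $S \in \mathscr{J}_T$ since $\mathscr{J}_T$ is closed. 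Finally, if $E_\kappa = \ell_\infty^c(\kappa)$, then $\mathscr{S}_{\ell_\infty^c(\kappa^+)}(\ell_\infty^c(\lambda)) = \mathscr{S}_{c_0(\kappa^+)}(\ell_\infty^c(\lambda))$ by the identification recorded before Corollary~\ref{corcomplhom}, and the assertion is \cite[Lemma~3.17]{jksch}; alternatively one mimics the $\ell_1$ argument, using the $\ell_\infty^c$-analogue of Lemma~\ref{ell1approx} (in the spirit of \cite[Lemma~3.15]{jksch}) together with the observation that a range-reduced operator $P_\Gamma S$ with $|\Gamma| \leqslant \kappa$ factors through $\ell_\infty^c(\kappa)$.

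The step I expect to require the most care is the $\ell_\infty^c$ case: since $\ell_\infty^c(\lambda)$ is \emph{not} the closed linear span of its coordinate vectors, the clean identity $S = SP_\Gamma$ used for $c_0$ and $\ell_p$ breaks down, and one is forced to reduce $S$ on the range side (or to appeal to the corresponding result of \cite{jksch}). A small but genuine point to track throughout is that Lemmas~\ref{cofinal}, \ref{cardbound} and \ref{ell1approx} are applied with the regular cardinal $\kappa^+$, which is exactly what makes their cofinality hypotheses hold.
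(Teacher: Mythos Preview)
Your argument is correct and follows essentially the same approach as the paper: the $\ell_\infty^c$ case is deferred to \cite{jksch}, the $\ell_1$ case uses Lemma~\ref{ell1approx} with $\kappa^+$, and the remaining cases use Lemma~\ref{cardbound} with $\kappa^+$ together with Corollary~\ref{corcomplhom}. The only notable differences are organisational: for $c_0$ and $\ell_p$ ($1<p<\infty$) you reduce on the \emph{domain} side (writing $S = SP_\Gamma$ with $\Gamma = \{\alpha : Se_\alpha \neq 0\}$), whereas the paper reduces on the \emph{range} side (writing $P_\Gamma S = S$ with $\Gamma = \bigcup_{\alpha} \support(Se_\alpha)$), which saves you the union-of-supports step; and you package the factorisation cleanly via a single identity $UTJ = \id_{E_\kappa}$ established once up front, where the paper writes out the explicit chain of compositions separately for each $S$.
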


\begin{proof}
	The case when $E_{\lambda} = \ell_{\infty}^c(\lambda)$ and $E_{\kappa} = \ell_{\infty}^c(\kappa)$ is  \cite[Lemma~3.17]{jksch}, so we may move on to the remaining cases (so far except the case $p = 1$, which will be treated separately). We split the proof into three parts.
	\begin{romanenumerate}
	    \item Let $S \in \mathscr{S}_{E_{\kappa^+}}(E_{\lambda})$. Consider the set
	\[
	\Lambda := \{\alpha < \lambda\colon  Se_{\alpha} \neq 0\}.
	\]
	As every successor cardinal number is regular, Lemma \ref{cardbound} implies $| \Lambda | < \kappa^+$. Let $\Gamma := \bigcup_{\alpha \in \Lambda} \support (Se_{\alpha})$, clearly $|\Gamma| \leqslant \kappa$. As
	\[
	\bigcup\limits_{x \in E_{\lambda}} \support (Sx)  =  \bigcup\limits_{\alpha \in \Lambda} \support (Se_{\alpha}),
	\]
	it follows from the definition that $P_{\Gamma} S =S$ and $\Ran(P_{\Gamma}) \cong E_{|\Gamma|}$.\smallskip
	
	    \item Since $|\Gamma | \leqslant \kappa$ and $T \notin \mathscr{S}_{E_{\kappa}}(E_{\lambda})$, we have $T \notin \mathscr{S}_{E_{|\Gamma|}}(E_{\lambda})$. By Corollary \ref{corcomplhom} we can take a closed subspace $E$ of $E_{\lambda}$ such that $E \cong E_{|\Gamma|}$, $T |_{E}$ is bounded below, and $\Ran(T |_{E})$ is complemented in $E_{\lambda}$. Clearly $T_1 := T |_{E}^{\Ran(T |_{E})} \in \mathscr{B}(E, \Ran(T |_{E}))$ is an isomorphism. Let $Q \in \mathscr{B}(E_{\lambda})$ be an idempotent such that $\Ran(Q)=\Ran(T |_E)$ and let $\iota \in \mathscr{B}(E, E_{\lambda})$ be the inclusion operator.\smallskip
	    
	    \item As we have $\Ran(P_{\Gamma}) \cong E_{|\Gamma|} \cong E \cong \Ran(T |_{E}) = \Ran(Q)$, we may take an isomorphism $V \in \mathscr{B}(\Ran(P_{\Gamma}), \Ran(Q))$. It is clear that $U:= P_{\Gamma} |_{\Ran(P_{\Gamma})} \circ V^{-1} \circ Q |^{\Ran(Q)} \in \mathscr{B}(E_{\lambda})$. To see that $S$ is contained in the two-sided ideal generated by $T$, it is sufficient to observe that
	\begin{align}
	U \circ T \circ \iota &\circ T_1^{-1} \circ V \circ P_{\Gamma} |^{\Ran(P_{\Gamma})} \circ S = U \circ Q |_{\Ran(Q)} \circ V \circ P_{\Gamma} |^{\Ran(P_{\Gamma})} \circ S \notag \\
	&= P_{\Gamma} |_{\Ran(P_{\Gamma})} \circ V^{-1} \circ Q |^{\Ran(Q)} \circ Q |_{\Ran(Q)} \circ V \circ P_{\Gamma} |^{\Ran(P_{\Gamma})} \circ S \notag \\
	&= P_{\Gamma} |_{\Ran(P_{\Gamma})} \circ V^{-1} \circ V \circ P_{\Gamma} |^{\Ran(P_{\Gamma})} \circ S \notag \\
	&= P_{\Gamma} |_{\Ran(P_{\Gamma})} \circ P_{\Gamma} |^{\Ran(P_{\Gamma})} \circ S \notag \\
	&= P_{\Gamma} \circ S \label{ellpeq} \\
	&= S.
	\end{align}
	\end{romanenumerate}
	
	It remains to show that the theorem holds for the pair $E_{\lambda} = \ell_1(\lambda)$, $E_{\kappa} = \ell_1(\kappa)$. This time we split the argument into two steps (where the latter step roughly corresponds to the last two steps in the previous part of the proof).
	\begin{romanenumerate}
	    \item\label{ell1_i} Let $S \in \mathscr{S}_{\ell_1(\kappa^+)}(\ell_1(\lambda))$. Fix $\varepsilon \in (0,1)$. It follows from Lemma \ref{ell1approx} that we can take $\Gamma \subseteq \lambda$ with $|\Gamma | < \kappa^+$ and $\| P_{\Gamma}S -S \| \leqslant \varepsilon$. Clearly $\Ran(P_{\Gamma}) \cong \ell_1(| \Gamma |)$.\smallskip
	    \item Since $|\Gamma | \leqslant \kappa$ and $T \notin \mathscr{S}_{\ell_1(\kappa)}(\ell_1(\lambda))$, we have $T \notin \mathscr{S}_{\ell_1(|\Gamma|)}(\ell_1(\lambda))$. It follows from Corollary \ref{corcomplhom} that there is a closed subspace $E$ of $\ell_1(\lambda)$ such that $E \cong \ell_1(|\Gamma|)$, $T |_{E}$ is bounded below, and $\Ran(T |_{E})$ is complemented in $\ell_1(\lambda)$. Proceeding exactly as in the $p \in (1,\infty)$ case, we arrive at the corresponding version of equation \eqref{ellpeq}, which shows that $P_{\Gamma} S$ belongs to the (non-closed) algebraic two-sided ideal generated by $T$. Together with \eqref{ell1_i}, this yields that $S$ belongs to the closed, two-sided ideal generated by $T$.\end{romanenumerate}\end{proof}
	
Let us conclude this section with observing that on long sequence spaces $E_{\lambda}$ the ideal of operators with separable range coincides with the ideal of $E_{\omega_1}$-singular operators:\pagebreak

\begin{Lem}\label{seprangeomega1}
Let $\lambda$ be an infinite cardinal number and consider one of the following cases:
\begin{itemize}
    \item $E_{\lambda}:= \ell_p(\lambda)$ and $E_{\omega_1}:= \ell_p(\omega_1)$ for $p \in [1, \infty)$;
    \item $E_{\lambda}:= \ell_{\infty}^c(\lambda)$ and $E_{\omega_1}:= c_0(\omega_1)$;
    \item $E_{\lambda}:= c_0(\lambda)$ and $E_{\omega_1}:= c_0(\omega_1)$.
\end{itemize}
Then $\mathscr{X}(E_{\lambda})= \mathscr{S}_{E_{\omega_1}}(E_{\lambda})$.
\end{Lem}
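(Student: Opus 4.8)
\noindent We prove the two inclusions separately; the inclusion $\mathscr{X}(E_\lambda)\subseteq\mathscr{S}_{E_{\omega_1}}(E_\lambda)$ is common to all three cases and immediate from the discussion of operator ideals in the preliminaries: since $E_{\omega_1}$ (one of $\ell_p(\omega_1)$ or $c_0(\omega_1)$, $\omega_1$ being uncountable) is non-separable, the general fact recorded there — that $\mathscr{X}(X)\subseteq\mathscr{S}_Y(X)$ whenever $Y$ is non-separable — applies with $Y=E_{\omega_1}$. So only the reverse inclusion needs work, and here the three cases are genuinely different.

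\noindent For $E_\lambda=c_0(\lambda)$ or $E_\lambda=\ell_p(\lambda)$ with $p\in(1,\infty)$, I would argue directly. Let $T\in\mathscr{S}_{E_{\omega_1}}(E_\lambda)$. Since $\omega_1$ is regular, $\cf(\omega_1)>\omega$, so Lemma~\ref{cardbound} (with $\kappa=\omega_1$) gives that $\Lambda:=\{\alpha<\lambda\colon Te_\alpha\neq0\}$ is countable. As the unit vectors $(e_\alpha)_{\alpha<\lambda}$ are fundamental in $E_\lambda$ and $T$ is bounded, $\Ran(T)\subseteq\overline{\spanning}\{Te_\alpha\colon\alpha\in\Lambda\}$, which is the closed linear span of countably many vectors, hence separable; thus $T\in\mathscr{X}(E_\lambda)$. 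The case $p=1$ is not covered by Lemma~\ref{cardbound}, so there I would instead use Lemma~\ref{ell1approx}: for each $n\in\mathbb N$ it produces a countable $\Gamma_n\subseteq\lambda$ with $\|P_{\lambda\setminus\Gamma_n}T\|=\|P_{\Gamma_n}T-T\|\leqslant1/n$; putting $\Gamma:=\bigcup_n\Gamma_n$ (still countable) and using that $A\mapsto\|P_AT\|$ is monotone with respect to inclusion one gets $\|P_{\lambda\setminus\Gamma}T\|\leqslant1/n$ for every $n$, whence $T=P_\Gamma T$ has range inside the separable space $\ell_1(\Gamma)$, so again $T\in\mathscr{X}(\ell_1(\lambda))$.

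\noindent The case $E_\lambda=\ell_\infty^c(\lambda)$ (with $E_{\omega_1}=c_0(\omega_1)$) is the main obstacle, precisely because the unit vectors fail to be fundamental — their closed span is only $c_0(\lambda)$ — so the argument of the previous paragraph collapses. Here I would proceed contrapositively: starting from $T\in\mathscr{B}(\ell_\infty^c(\lambda))$ with non-separable range, produce a closed subspace $W\cong c_0(\omega_1)$ of $\ell_\infty^c(\lambda)$ on which $T$ is bounded below, showing $T\notin\mathscr{S}_{c_0(\omega_1)}(\ell_\infty^c(\lambda))$. One first reduces, via Lemma~\ref{cardbound}, to the situation in which $\{\alpha\colon Te_\alpha\neq0\}$ is countable (otherwise $T\notin\mathscr{S}_{c_0(\omega_1)}$ already). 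Then, using that the range is non-separable, one selects by transfinite recursion a bounded family $(x_\alpha)_{\alpha<\omega_1}$ in $\ell_\infty^c(\lambda)$ whose images $Tx_\alpha$ remain at a fixed positive distance from the closed span of the previously chosen images, and — exactly as in the proof of Lemma~\ref{cardbound}, legitimately since $\ell_\infty^c(\lambda)$ embeds isometrically in a $C(K)$-space and the $Tx_\alpha$ have countable supports — invokes Rosenthal's relative-disjointification results (\cite{rosenthalmeasure}) to pass to an uncountable $\Lambda'\subseteq\omega_1$ along which $(Tx_\alpha)_{\alpha\in\Lambda'}$ is relatively disjoint, hence equivalent to the unit vector basis of $c_0(\omega_1)$ and such that $T$ is bounded below on $W:=\overline{\spanning}\{x_\alpha\colon\alpha\in\Lambda'\}\cong c_0(\omega_1)$. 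In carrying this out one may also lean on the identity $\mathscr{S}_{c_0(\omega_1)}(\ell_\infty^c(\lambda))=\mathscr{S}_{\ell_\infty^c(\omega_1)}(\ell_\infty^c(\lambda))$ recorded before Corollary~\ref{corcomplhom}, together with the relevant structural results of \cite{jksch}, to streamline the bookkeeping. The crux — and the hardest point — is the extraction of the uncountable relatively disjoint subfamily: an uncountable family of countably supported vectors cannot be disjointified outright, so it is here that Rosenthal's lemma and the specific structure of $\ell_\infty^c(\lambda)$ must do the real work.
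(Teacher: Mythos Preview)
Your treatment of the inclusion $\mathscr{X}\subseteq\mathscr{S}_{E_{\omega_1}}$ and of the cases $E_\lambda\in\{c_0(\lambda),\ell_p(\lambda)\}$ with $p\in(1,\infty)$ matches the paper's, just run directly rather than contrapositively. For $\ell_1(\lambda)$ you take a genuinely different and more economical route: where you invoke Lemma~\ref{ell1approx} with $\kappa=\omega_1$ and pass to the countable union $\Gamma=\bigcup_n\Gamma_n$, the paper instead argues contrapositively from a non-separable range, cites K\"othe's structural result to locate a copy of $\ell_1(\omega_1)$ inside $\overline{\Ran(T)}$, perturbs a basis of that copy into $\Ran(T)$, thins by pigeonhole to a bounded family, and finishes with Rosenthal's \cite[Corollary on p.~29]{rosenthalmeasure}. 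Your argument is shorter and stays entirely within the paper's own lemmas.

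For $\ell_\infty^c(\lambda)$ you have correctly identified a subtlety that the paper actually glosses over: the paper lumps $\ell_\infty^c(\lambda)$ together with $c_0(\lambda)$ and $\ell_p(\lambda)$ and asserts $\Ran(T)\subseteq\overline{\spanning}\{Te_\alpha\colon\alpha<\lambda\}$ in all three cases --- but this inclusion is false for $\ell_\infty^c(\lambda)$, since the unit vectors span only $c_0(\lambda)$ there (any rank-one operator built from a Banach-limit functional already violates it). So your instinct that this case needs separate handling is sharper than the paper's own proof.

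That said, your sketch for $\ell_\infty^c(\lambda)$ is not yet a proof. The weak step is the last one: relative disjointness of the \emph{images} $(Tx_\alpha)_{\alpha\in\Lambda'}$ makes them a $c_0(\omega_1)$-basis, but it does not by itself force $(x_\alpha)_{\alpha\in\Lambda'}$ to span a copy of $c_0(\omega_1)$, nor does it make $T$ bounded below on that span. What is missing is control on the \emph{domain} side. One way to close the gap: the $x_\alpha$ have countable supports in $\ell_\infty^c(\lambda)$, so a $\Delta$-system argument lets you pass to an uncountable subfamily whose supports agree only on a fixed countable root $R$; subtracting the $P_R$-parts (which all land, under $T$, in a single separable subspace and hence do not destroy the separation of the images) yields disjointly supported, norm-bounded vectors in $\ell_\infty^c(\lambda)$ --- and such a family \emph{is} equivalent to the $c_0(\omega_1)$-basis. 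With $\inf_\alpha\|Tx_\alpha\|>0$ preserved along this subfamily, Rosenthal's Remark~1 on p.~30 then applies exactly as in Lemma~\ref{cardbound} to produce the desired copy of $c_0(\omega_1)$ on which $T$ is bounded below.
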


\begin{proof}
    By the last paragraph of Section \ref{sect: op ids}, the containment $\mathscr{X}(E_{\lambda}) \subseteq \mathscr{S}_{E_{\omega_1}}(E_{\lambda})$ is clear. To see the other direction, suppose $T \in \mathscr{B}(E_{\lambda})$ is such that $T \notin \mathscr{X}(E_{\lambda})$.\smallskip 
    
    Assume first $E_{\lambda}= \ell_1(\lambda)$ and $E_{\omega_1}= \ell_1(\omega_1)$. As $\overline{\Ran(T)}$ is a non-separable, closed subspace of $E_{\lambda}$, it follows from \cite[point~(5)~on~p.~185]{koethe} that there is a~closed (complemented) subspace $W$ of $E_{\lambda}$ such that $W \subseteq \overline{\Ran(T)}$ and $W \cong E_{\omega_1}$. Let us pick a normalised transfinite basic sequence $(w_{\alpha})_{\alpha < \omega_1}$ in $W$ such that it is equivalent to the standard unit vector basis of $E_{\omega_1}$. Hence for each $\alpha < \omega_1$ there is $x_{\alpha} \in E_{\lambda}$ such that $\| w_{\alpha} - T x_{\alpha} \| < 1/2$. It follows from \cite[Example~30.12]{jameson} or \cite[Fact~5.2]{hkr2020} that $(Tx_{\alpha})_{\alpha < \omega_1}$ is a transfinite basic sequence in $E_{\lambda}$ equivalent to $(w_{\alpha})_{\alpha < \omega_1}$, and hence to the standard unit vector basis of $E_{\omega_1}$. In particular, there is $\delta \in (0,1)$ such that $\| Tx_{\alpha} -Tx_{\beta} \| \geqslant \delta$ for each distinct $\alpha, \beta < \omega_1$. Clearly, there is some $n_0 \in \mathbb{N}$ such that the set $\Gamma:= \{ \alpha < \omega_1 \colon \|x_{\alpha} \| \leqslant n_0 \}$ has cardinality $\omega_1$. In conclusion, $(x_{\alpha})_{\alpha \in \Gamma}$ is a bounded transfinite sequence in $E_{\lambda}$ such that $\| Tx_{\alpha} - Tx_{\beta} \| \geqslant \delta$ for each distinct $\alpha, \beta \in \Gamma$, where $| \Gamma| = \omega_1$. Therefore \cite[Corollary~on~p.~29]{rosenthalmeasure} applies; there is a closed (complemented) subspace $Z$ of $E_{\lambda}$ such that $Z \simeq E_{\omega_1}$ and $T |_Z$ is bounded below. Consequently $T \notin \mathscr{S}_{E_{\omega_1}}(E_{\lambda})$.\smallskip
    
    We now consider the remaining cases. As $T$ is continuous, it follows that
    \[
    \Ran(T) \subseteq \overline{\spanning}\{Te_{\alpha} \colon \alpha < \lambda\},
    \]
    hence the right-hand side cannot be separable. This in particular implies that the set $\{\alpha < \lambda \colon Te_{\alpha} \neq 0\}$ must have cardinality at least $\omega_1$, which in turn together with Lemma \ref{cardbound} yields $T \notin \mathscr{S}_{E_{\omega_1}}(E_{\lambda})$. 
\end{proof}

\subsection{An application: $\sigma_{\rm SOT}$-closed ideals}

Let us briefly recall the notion of the strong operator topology on $\mathscr{B}(X)$. If $X$ is a Banach space, then the \textit{strong operator topology $\tau_{\rm SOT}$ on $\mathscr{B}(X)$} is the smallest topology $\tau'$ on $\mathscr{B}(X)$ such that for every $x \in X$ the map
\begin{align}
\varepsilon_x\colon  \mathscr{B}(X) \rightarrow X; \quad T \mapsto Tx
\end{align}
is $\tau'$-to-norm continuous. The topology $\tau_{\rm SOT}$ is a linear, locally convex, Hausdorff topology on $\mathscr{B}(X)$. We say that a net $(T_i)_{i \in I}$ in $\mathscr{B}(X)$ \textit{SOT-converges to $T \in \mathscr{B}(X)$} if $(T_ix)_{i \in I}$ converges to $Tx \in X$ in norm for every $x \in X$. This notion of convergence characterises convergence with respect to the $\tau_{\rm SOT}$ topology, in the sense that a net $(T_i)_{i \in I}$ in $\mathscr{B}(X)$ converges to $T \in \mathscr{B}(X)$ in the $\tau_{\rm SOT}$ topology if and only if $(T_i)_{i \in I}$ SOT-converges to $T$. It follows that a set $C \subseteq \mathscr{B}(X)$ is $\tau_{\rm SOT}$-closed if and only if for any net $(T_i)_{i \in I}$ in $C$ which SOT-converges to some $T \in \mathscr{B}(X)$ it follows that $T \in C$.\smallskip

Let us recall that the $\tau_{\rm SOT}$-closure of the finite-rank operators $\mathscr{F}(X)$ is the whole of $\mathscr{B}(X)$. Indeed, let ${\rm Fin}\, X$ be the set of all finite-dimensional subspaces of $X$. We consider ${\rm Fin}\, X$ ordered by the inclusion. For every $F \in {\rm Fin}\, X$ let us fix an idempotent $P_F \in \mathscr{B}(X)$ with $\Ran(P_F) = F$. Then $(P_F)_{F \in {\rm Fin}\, X}$ converges to $I_X$ in the strong operator topology, as $P_Fx=x$ for each $F \in {\rm Fin}\, X$ and each $x \in F$. Consequently, whenever $\mathscr{S}$ is a~subset of $\mathscr{B}(X)$ with $\mathscr{F}(X) \subseteq \mathscr{S}$, then the $\tau_{\rm SOT}$-closure of $\mathscr{S}$ is the whole of $\mathscr{B}(X)$. In particular, there is no non-trivial, proper, two-sided ideal of $\mathscr{B}(X)$ that is $\tau_{\rm SOT}$-closed.\smallskip

The moral of the argument above is that the strong operator topology $\tau_{\rm SOT}$ is `too weak' for $\mathscr{B}(X)$ to have non-trivial, proper two-sided ideals that are $\tau_{\rm SOT}$-closed at the same time. We are about to introduce a topology on $\mathscr{B}(X)$ which sits naturally between $\tau_{\rm SOT}$ and the topology of convergence in operator norm, denoted by $\tau_{\| \cdot \|_{\rm op}}$. We say that a set $C \subseteq \mathscr{B}(X)$ is \textit{$\sigma$-SOT closed}, whenever for every sequence $(T_n)_{n=1}^\infty $ in $C$ which SOT-converges to some $T \in \mathscr{B}(X)$, $T \in C$ follows. We say that $U \subseteq \mathscr{B}(X)$ is \textit{$\sigma$-SOT open}, whenever for any $T \in U$ and a sequence $(T_n)_{n=1}^\infty $ in $\mathscr{B}(X)$ which SOT-converges to $T$ there is $N \in \mathbb{N}$ such that $T_n \in U$ for each $n \geqslant N$. These notions correspond exactly as expected:
\begin{Lem}\label{sigmasotclosedopen}
Let $X$ be a Banach space. Then $C \subseteq \mathscr{B}(X)$ is $\sigma$-SOT closed if and only if the complement $U$ of $C$ in $\mathscr{B}(X)$ is $\sigma$-SOT open.
\end{Lem}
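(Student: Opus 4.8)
The plan is to prove both implications by unwinding the definitions of $\sigma$-SOT closed and $\sigma$-SOT open, together with the basic observation that a sequence $(T_n)$ SOT-converging to $T$ is a property purely of the sequence and its limit. The statement is genuinely a tautology of the same flavour as ``a set is closed iff its complement is open'' in a topological space, except that here we only have a \emph{convergence class} of sequences rather than a genuine topology, so the one subtlety to watch is that we never need to invoke uniqueness of SOT-limits or any topological axiom beyond what is explicitly in the definitions.

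First I would prove the forward direction. Suppose $C$ is $\sigma$-SOT closed and let $U = \mathscr{B}(X) \setminus C$. Take $T \in U$ and a sequence $(T_n)_{n=1}^\infty$ in $\mathscr{B}(X)$ which SOT-converges to $T$; I must produce $N \in \mathbb{N}$ such that $T_n \in U$ for all $n \geqslant N$. I argue by contradiction: if no such $N$ exists, then the set $\{ n \in \mathbb{N} \colon T_n \in C \}$ is infinite, so I can extract a subsequence $(T_{n_k})_{k=1}^\infty$ lying entirely in $C$. Since $(T_n)$ SOT-converges to $T$, so does every subsequence, hence $(T_{n_k})_{k=1}^\infty$ SOT-converges to $T$; as $C$ is $\sigma$-SOT closed, this forces $T \in C$, contradicting $T \in U$. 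Therefore such an $N$ exists and $U$ is $\sigma$-SOT open.

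For the converse, suppose $U = \mathscr{B}(X) \setminus C$ is $\sigma$-SOT open and let $(T_n)_{n=1}^\infty$ be a sequence in $C$ which SOT-converges to some $T \in \mathscr{B}(X)$; I must show $T \in C$. Again argue by contradiction: if $T \notin C$, then $T \in U$, so by $\sigma$-SOT openness of $U$ there is $N \in \mathbb{N}$ with $T_n \in U$ for all $n \geqslant N$. But $T_N \in C = \mathscr{B}(X) \setminus U$ by hypothesis, a contradiction. Hence $T \in C$, so $C$ is $\sigma$-SOT closed.

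I expect no real obstacle here; the only thing to be careful about is the subsequence argument in the forward direction, where I use that SOT-convergence passes to subsequences — this is immediate from the definition (norm-convergence of $(T_n x)$ for each $x$ passes to subsequences) but should be stated, since the definition of $\sigma$-SOT closed quantifies over sequences, not subsequences. With that observation in hand, both directions are a two-line contradiction argument.
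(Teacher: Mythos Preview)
Your proof is correct and follows essentially the same route as the paper: both directions are handled by contradiction/contraposition, with the key step in the forward direction being the extraction of a subsequence lying in $C$ and the observation that SOT-convergence passes to subsequences. The paper phrases things as contrapositives rather than contradictions, but the logical content is identical.
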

\begin{proof}
	We prove both directions by way of a contraposition. 
	
	Suppose $U \subseteq \mathscr{B}(X)$ is not $\sigma$-SOT open. Hence there is a $T \in U$ and a sequence $(T_n)_{n=1}^\infty $ in $\mathscr{B}(X)$ which SOT-converges to $T$, but for every $N \in \mathbb{N}$ there is $n \geqslant N$ such that $T_n \notin U$. Hence there is a strictly monotone increasing function $\rho \colon  \mathbb{N} \rightarrow \mathbb{N}$ such that $T_{\rho(n)} \in C:= \mathscr{B}(X) \setminus U$ for each $n \in \mathbb{N}$. As $(T_{\rho(n)})_{n=1}^\infty $ also SOT-converges to $T$ and $T \notin C$, we obtain that $C$ is not $\sigma$-SOT closed.\smallskip
	
	Suppose that $C \subseteq \mathscr{B}(X)$ is not $\sigma$-SOT closed. Hence there is a $T \in U:= \mathscr{B}(X) \setminus C$ and a sequence $(T_n)_{n=1}^\infty $ in $C$ which SOT-converges to $T$. Thus $U$ cannot be $\sigma$-SOT open, as claimed.
\end{proof}

\begin{Prop}\label{sigmasotistop}
Let $X$ be a Banach space. Let $\sigma_{\rm SOT}$ be the collection of all $\sigma$-SOT open subsets of $\mathscr{B}(X)$. Then $\sigma_{\rm SOT}$ is a topology on $\mathscr{B}(X)$.
\end{Prop}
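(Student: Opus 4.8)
The plan is to check the three axioms of a topology directly against the sequential definition of $\sigma$-SOT openness, using nothing beyond that definition.

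The empty set and $\mathscr{B}(X)$ are handled at once: $\varnothing$ is $\sigma$-SOT open vacuously, since it contains no point $T$ against which the defining condition could fail, and $\mathscr{B}(X)$ is $\sigma$-SOT open because every sequence in $\mathscr{B}(X)$ trivially lies in $\mathscr{B}(X)$ (one may take $N=1$). For arbitrary unions, given $\sigma$-SOT open sets $(U_i)_{i \in I}$, a point $T$ in $U := \bigcup_{i \in I} U_i$, and a sequence $(T_n)_{n=1}^\infty$ in $\mathscr{B}(X)$ which SOT-converges to $T$, I would pick an index $i_0$ with $T \in U_{i_0}$, apply $\sigma$-SOT openness of $U_{i_0}$ to obtain $N \in \mathbb{N}$ with $T_n \in U_{i_0}$ for all $n \geqslant N$, and observe $U_{i_0} \subseteq U$; hence $U$ is $\sigma$-SOT open.

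The only step with any content is stability under \emph{finite} intersection. Given $\sigma$-SOT open sets $U_1, \dots, U_k$, a point $T \in U := \bigcap_{j=1}^{k} U_j$, and a sequence $(T_n)_{n=1}^\infty$ which SOT-converges to $T$, for each $j \in \{1, \dots, k\}$ one obtains $N_j \in \mathbb{N}$ with $T_n \in U_j$ for all $n \geqslant N_j$; then $N := \max\{N_1, \dots, N_k\}$ works simultaneously for all of the $U_j$, so $T_n \in U$ for $n \geqslant N$. Here finiteness of the index set is essential — it is exactly what guarantees $N \in \mathbb{N}$ — and this is precisely why the argument does not extend to arbitrary intersections. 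I do not anticipate any genuine obstacle: this is the routine verification that a sequentially-defined family of ``open'' sets forms a topology, and, in combination with Lemma~\ref{sigmasotclosedopen}, it legitimises speaking of $\sigma_{\rm SOT}$-closed subsets of $\mathscr{B}(X)$ in the subsequent sections.
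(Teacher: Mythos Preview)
Your proposal is correct and follows essentially the same approach as the paper's proof: a direct verification of the topology axioms using the sequential definition of $\sigma$-SOT openness, with the maximum of finitely many thresholds handling finite intersections and a single witnessing index handling arbitrary unions. The only cosmetic difference is that you explicitly treat $\varnothing$, which the paper omits as obvious.
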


\begin{proof}
It is evident that $\mathscr{B}(X) \in \sigma_{\rm SOT}$. The set $\sigma_{\rm SOT}$ is closed under taking finite intersections: Let $(U_i)_{i=1}^m$ be a finite collection in $\sigma_{\rm SOT}$ and let $U:= \cap_{i=1}^m U_i$. Let $T \in U$ and let $(T_n)_{n=1}^\infty $ be a~sequence in $\mathscr{B}(X)$ such that it SOT-converges to $T$. Fix $i \in \{1, \ldots, m\}$. As $U_i \in \sigma_{\rm SOT}$, there is an $N^{(i)} \in \mathbb{N}$ such that $T_n \in U_i$ for each $n \geqslant N^{(i)}$. Let $N:= \max_{1 \leqslant i \leqslant m} N^{(i)}$, then $T_n \in U_i$ for each $i \in \{1, \ldots, m\}$ and $n \geqslant N$. Hence $T_n \in U$ for all $n \geqslant N$, showing $U \in \sigma_{\rm SOT}$. The set $\sigma_{\rm SOT}$ is closed under taking arbitrary unions: Let $(U_i)_{i \in I}$ be a collection in $\sigma_{\rm SOT}$ and let $U:= \cup_{i \in I} U_i$. Let $T \in U$ and let $(T_n)_{n=1}^\infty $ be a sequence in $\mathscr{B}(X)$ that SOT-converges to $T$. In particular $T \in U_j$ for some $j \in I$, hence by $U_j \in \sigma_{\rm SOT}$ there is an $N \in \mathbb{N}$ such that $T_n \in U_j \subseteq U$ for each $n \geqslant N$. Thus $U \in \sigma_{\rm SOT}$, therefore $\sigma_{\rm SOT}$ is a topology as claimed.
\end{proof}

We remark in passing that the topology $\sigma_{\rm SOT}$ is an instance of the so-called `topology induced by $L^*$-convergence' (see, \emph{e.g.}, \cite{dudley}). Such a topology is automatically $T_1$ but need not be Hausdorff in general.\smallskip

\begin{Rem} The reader may wonder at this point whether the topology $\sigma_{\rm SOT}$ is characterised by SOT-convergent \textit{sequences} in $\mathscr{B}(X)$. The mere fact that results such as Lemma \ref{sigmasotclosedopen} and Proposition \ref{sigmasotistop} hold does not automatically yield this in general.\smallskip

Indeed, consider the space $\mathscr{M}_b[0,1]$ of real-valued, bounded, measurable functions on $[0,1]$. One can define a topology $\sigma_{\rm ae}$ on $\mathscr{M}_b[0,1]$ the following way: A set $C \subseteq \mathscr{M}_b[0,1]$ is a.e.- closed whenever for every sequence $(f_n)_{n=1}^\infty $ in $C$ which converges to some $f \in \mathscr{M}_b[0,1]$ almost everywhere, $f \in C$ follows. A set $U \subseteq \mathscr{M}_b[0,1]$ is a.e.-open if for any $f \in U$ whenever $(f_n)_{n=1}^\infty $ is a sequence in $\mathscr{M}_b[0,1]$ which converges to $f$ almost everywhere, there is an $N \in \mathbb{N}$ such that $f_n \in U$ for each $n \geqslant N$. It is easy to see that the corresponding versions of Lemma \ref{sigmasotclosedopen} and Proposition \ref{sigmasotistop} hold, that is $C \subseteq \mathscr{M}_b[0,1]$ is a.e.-closed if and only if $U:= \mathscr{M}_b[0,1] \setminus C$ is a.e.-open, and the collection of a.e.-open subsets is a $T_1$ topology on $\mathscr{M}_b[0,1]$, which we may denote by $\sigma_{\rm ae}$. However, as it is demonstrated by Ordman's argument from \cite{ordman}, there is a~sequence of functions $(f_n)_{n=1}^{\infty}$ in $\mathscr{M}_b[0,1]$ which converges to $0$ with respect to the topology $\sigma_{\rm ae}$, but it does not converge to $0$ almost everywhere.\smallskip

Fortunately, $\sigma_{\rm SOT}$ does not have this pathological property, as we shall see it from the next lemma.\end{Rem}

\begin{Lem}\label{equivconv}
	Let $X$ be a Banach space, let $T \in \mathscr{B}(X)$ and let $(T_n)_{n=1}^\infty $ be a sequence in $\mathscr{B}(X)$. Then $(T_n)_{n=1}^\infty $ SOT-converges to $T$ if and only if it converges to $T$ in the $\sigma_{\rm SOT}$ topology.
\end{Lem}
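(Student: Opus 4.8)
The plan is to prove the two implications separately, with the forward direction ("SOT-convergence implies $\sigma_{\rm SOT}$-convergence") being essentially formal and the reverse direction being the one that requires an argument. First I would dispose of the forward direction: suppose $(T_n)_{n=1}^\infty$ SOT-converges to $T$, and let $U$ be any $\sigma_{\rm SOT}$-open set containing $T$. By the very definition of $\sigma$-SOT open (and since convergence in the topology $\sigma_{\rm SOT}$ means: every $\sigma_{\rm SOT}$-open neighbourhood of $T$ eventually contains the sequence), there is $N$ with $T_n \in U$ for all $n \geqslant N$. Hence $(T_n)$ converges to $T$ in $\sigma_{\rm SOT}$. (One should note here that $\sigma_{\rm SOT}$ need not be Hausdorff, so a priori a sequence could $\sigma_{\rm SOT}$-converge to several operators; that does not affect this implication.)

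For the reverse direction I would argue by contraposition: assume $(T_n)_{n=1}^\infty$ does \emph{not} SOT-converge to $T$, and produce a $\sigma_{\rm SOT}$-open neighbourhood of $T$ that the sequence fails to be eventually inside, which shows $(T_n)$ does not $\sigma_{\rm SOT}$-converge to $T$. By negating the definition of SOT-convergence, there is a vector $x_0 \in X$, an $\varepsilon > 0$, and a strictly increasing $\rho\colon \mathbb{N} \to \mathbb{N}$ such that $\|T_{\rho(k)} x_0 - T x_0\| \geqslant \varepsilon$ for every $k$. The natural candidate for the separating open set is the basic SOT-open ball
\[
U := \{ R \in \mathscr{B}(X) \colon \|R x_0 - T x_0\| < \varepsilon \},
\]
which certainly contains $T$. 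The key step is then to check that $U$ is $\sigma_{\rm SOT}$-open in the sense of the paper, i.e.\ that whenever a sequence $(R_n)_{n=1}^\infty$ SOT-converges to some $R \in U$, it is eventually in $U$; but this is immediate, since $R_n x_0 \to R x_0$ in norm and $\|R x_0 - T x_0\| < \varepsilon$, so for large $n$ we have $\|R_n x_0 - T x_0\| < \varepsilon$. Thus $U \in \sigma_{\rm SOT}$, $T \in U$, yet $T_{\rho(k)} \notin U$ for all $k$, so no tail of $(T_n)$ lies in $U$; hence $(T_n)$ does not $\sigma_{\rm SOT}$-converge to $T$, completing the contrapositive.

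The only mildly delicate point — and the step I would flag as the "main obstacle" if there is one — is making sure the definition of convergence in the topology $\sigma_{\rm SOT}$ is being used correctly in the non-Hausdorff setting: a sequence converges to $T$ in a topological space precisely when every open neighbourhood of $T$ contains a tail of the sequence, and this is exactly what the $\sigma$-SOT open sets were designed to encode via Lemma~\ref{sigmasotclosedopen} and Proposition~\ref{sigmasotistop}. Once that is granted, both halves are short, and I would write the proof to emphasise that the forward implication is a tautological consequence of how the topology was constructed from SOT-convergent sequences, while the reverse implication rests on the single observation that SOT-basic balls of the form $\{R : \|Rx_0 - Tx_0\| < \varepsilon\}$ are themselves $\sigma_{\rm SOT}$-open. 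No further machinery (nets, metrizability, first-countability) is needed.
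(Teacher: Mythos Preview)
Your proof is correct and follows essentially the same route as the paper's: the forward direction is dismissed as immediate from the definition, and the reverse direction is done by contraposition using the basic SOT-ball $U=\{R:\|Rx_0-Tx_0\|<\varepsilon\}$, verified to be $\sigma_{\rm SOT}$-open by the same triangle-inequality argument. One minor remark: your caveat that $\sigma_{\rm SOT}$ ``need not be Hausdorff'' is harmless but unnecessary here, since $\tau_{\rm SOT}\subseteq\sigma_{\rm SOT}$ already makes $\sigma_{\rm SOT}$ Hausdorff.
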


\begin{proof}
	It is clear from the definition that strong operator convergence implies convergence in the $\sigma_{\rm SOT}$ topology. We show the other direction by way of a contraposition. Suppose $(T_n)_{n=1}^\infty $ does not SOT-converge to $T$. Hence there is $x \in X$ and $\varepsilon \in (0,1)$ such that for each $N \in \mathbb{N}$ there is $n \geqslant N$ with $\| Tx -T_nx \| \geqslant \varepsilon$. We set $U:= \{S \in \mathscr{B}(X)\colon  \|Sx-Tx \| < \varepsilon\}$. Clearly $T \in U$ and for each $N \in \mathbb{N}$ there is $n \geqslant N$ such that $T_n \notin U$. We \textit{claim} that $U \in \sigma_{\rm SOT}$. Indeed, let $S \in U$ and let $(S_n)_{n=1}^\infty $ be a sequence in $\mathscr{B}(X)$ which SOT-converges to $S$. As $\|Tx -Sx \| < \varepsilon$, for $\delta:= \varepsilon- \|Tx- Sx \| \in (0, 1)$ we can take $M \in \mathbb{N}$ such that $\|Sx -S_nx \| < \delta$ holds whenever $n \geqslant M$. Hence $\|Tx - S_n x \| < \varepsilon$ and thus $S_n \in U$ for each $n \geqslant M$, as claimed.
\end{proof}

The second part of the next proposition is a consequence of the fact that the weak*-topology of a Banach space $X^*$ is sequential (or even Fr\'echet--Urysohn) if and only if $X$ is finite-dimensional (see \textit{e.g.} \cite[Proposition~2.6.12]{Megginson}). Since our argument relies on the details of the proof of the aforementioned fact, we present it here in its entirety.

\begin{Prop}
    Let $X$ be a Banach space. Then $\sigma_{\rm SOT}$ is a Hausdorff topology on $\mathscr{B}(X)$ with
    $$\tau_{\rm SOT} \subseteq \sigma_{\rm SOT} \subseteq \tau_{\| \cdot \|_{\rm op}}.$$
    Both inclusions are proper if and only if $X$ is infinite-dimensional.
\end{Prop}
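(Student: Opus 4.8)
The statement has three parts: (1) $\sigma_{\rm SOT}$ is Hausdorff; (2) the chain of inclusions $\tau_{\rm SOT} \subseteq \sigma_{\rm SOT} \subseteq \tau_{\|\cdot\|_{\rm op}}$; (3) both inclusions are proper precisely when $\dim X = \infty$. I would handle these in order. For Hausdorffness, given distinct $S, T \in \mathscr{B}(X)$, pick $x \in X$ with $Sx \neq Tx$, set $\varepsilon := \frac{1}{2}\|Sx - Tx\|$, and take the balls $U := \{R : \|Rx - Sx\| < \varepsilon\}$ and $V := \{R : \|Rx - Tx\| < \varepsilon\}$. These are disjoint and, by exactly the argument given in the proof of Lemma~\ref{equivconv} (any sequence SOT-converging to a point of such a ball is eventually inside it), they are $\sigma_{\rm SOT}$-open. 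For the inclusion $\tau_{\rm SOT} \subseteq \sigma_{\rm SOT}$: a $\tau_{\rm SOT}$-open set $U$ contains, around each of its points $T$, a basic SOT-neighbourhood of the form $\{R : \|Rx_i - Tx_i\| < \varepsilon,\ i = 1,\dots,k\}$; since SOT-convergence of a sequence to $T$ forces the sequence eventually into every such basic neighbourhood, $U$ is $\sigma_{\rm SOT}$-open. For $\sigma_{\rm SOT} \subseteq \tau_{\|\cdot\|_{\rm op}}$: if $U$ is $\sigma_{\rm SOT}$-open and $T \in U$, then since norm-convergence implies SOT-convergence, every norm-convergent sequence to $T$ is eventually in $U$; but a set in a metric space all of whose convergent sequences (to points of the set) are eventually inside it is open, so $U \in \tau_{\|\cdot\|_{\rm op}}$.

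\textbf{Properness.} When $\dim X < \infty$, all three topologies coincide (norm, SOT, and hence $\sigma_{\rm SOT}$, all agree on $\mathscr{B}(X) \cong M_n(\mathbb{K})$), so neither inclusion is proper; the forward direction of the ``if and only if'' is thus trivial. The substantive content is the converse: assuming $\dim X = \infty$, exhibit witnesses showing strictness of both inclusions. For $\tau_{\rm SOT} \subsetneq \sigma_{\rm SOT}$, I would produce a $\sigma_{\rm SOT}$-open (equivalently, $\sigma_{\rm SOT}$-closed complement) set that is not $\tau_{\rm SOT}$-open; a natural candidate is something like the unit ball $\{T : \|T\| \le 1\}$ of $\mathscr{B}(X)$, which is norm-closed hence $\sigma_{\rm SOT}$-closed (its complement being $\sigma_{\rm SOT}$-open by the metric argument above) but is SOT-dense in all of $\mathscr{B}(X)$ by the Kaplansky-type density phenomenon — more elementarily, the net $(P_F)_{F \in {\rm Fin}\,X}$ of finite-rank idempotents used earlier shows that any norm-bounded-by-one approximation is SOT-dense, and in fact $\mathscr{F}(X)$ is SOT-dense, so the $\|\cdot\| \le 1$ ball cannot be $\tau_{\rm SOT}$-closed; its complement is a $\sigma_{\rm SOT}$-open set that is not $\tau_{\rm SOT}$-open. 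For $\sigma_{\rm SOT} \subsetneq \tau_{\|\cdot\|_{\rm op}}$, I would find a norm-open set that is not $\sigma_{\rm SOT}$-open, equivalently a norm-closed set whose complement fails to be $\sigma_{\rm SOT}$-open, equivalently a set $C$ that is \emph{not} $\sigma$-SOT closed — i.e. there is a sequence in $C$ SOT-converging to a point outside $C$ — but whose complement we want norm-open, so we want $C$ norm-closed yet not sequentially-SOT-closed. Equivalently: exhibit a sequence $(T_n)$ in $\mathscr{B}(X)$ with $\|T_n\| = 1$ for all $n$ that SOT-converges to $0$; then $C := \{T : \|T\| \ge 1\}$ is norm-closed but $0 \in \overline{C}^{\,\sigma_{\rm SOT}}$ (via Lemma~\ref{equivconv}), so $C$ is not $\sigma$-SOT closed, hence $\mathscr{B}(X)\setminus C$ is norm-open but not $\sigma_{\rm SOT}$-open. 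Producing such a sequence in an arbitrary infinite-dimensional $X$ is where I would lean on the quoted fact about the weak*-topology of $X^*$ not being Fréchet–Urysohn / sequential: taking $(f_n)$ a norm-one sequence in $X^*$ with $f_n \to 0$ weak* (which exists by e.g. Josefson–Nikodým, or more simply: in infinite dimensions one can always find a normalised sequence in $X^*$ that is weak*-null — pick a biorthogonal-type sequence), and $x_0 \in X$ with $\|x_0\| = 1$, set $T_n := f_n \otimes x_0$ (rank-one); then $\|T_n\| = \|f_n\| = 1$ and $T_n y = f_n(y) x_0 \to 0$ for every $y$, so $T_n \xrightarrow{\rm SOT} 0$. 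This single construction simultaneously settles $\sigma_{\rm SOT} \subsetneq \tau_{\|\cdot\|_{\rm op}}$.

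\textbf{Main obstacle.} The genuinely delicate point — and the reason the preamble to the proposition flags the sequentiality of the weak*-topology — is the strictness $\sigma_{\rm SOT} \subsetneq \tau_{\|\cdot\|_{\rm op}}$, or more precisely ensuring that we have a \emph{sequence} (not merely a net) that SOT-converges to $0$ while staying on the unit sphere of $\mathscr{B}(X)$. In separable or reflexive spaces a weak*-null normalised sequence in $X^*$ is immediate, but for a general infinite-dimensional $X$ one must invoke the Josefson–Nikodým theorem (every infinite-dimensional Banach space admits a weak*-null sequence of norm-one functionals), and it is exactly the proof of that — equivalently the proof that $(X^*, w^*)$ is not sequential for $\dim X = \infty$ — that the authors say they are reproducing. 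I would therefore structure the argument so that the weak*-topology fact is quoted once and converted into the rank-one operator sequence $T_n = f_n \otimes x_0$, from which both non-trivial strictness claims follow cleanly: the unit ball's complement for the first, the unit sphere sequence for the second. The remaining inclusions and the Hausdorff property are routine neighbourhood-chasing along the lines already displayed in Lemmas~\ref{sigmasotclosedopen}--\ref{equivconv}.
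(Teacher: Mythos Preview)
Your argument for the strictness $\tau_{\rm SOT} \subsetneq \sigma_{\rm SOT}$ does not work. The unit ball $\{T \in \mathscr{B}(X) : \|T\| \le 1\}$ is \emph{already} $\tau_{\rm SOT}$-closed: it is the intersection $\bigcap_{\|x\|\le 1} \varepsilon_x^{-1}(\overline{B_X})$ of preimages of the closed unit ball of $X$ under the defining $\tau_{\rm SOT}$-continuous maps $\varepsilon_x$; equivalently, if a net $(T_i)$ with $\|T_i\|\le 1$ SOT-converges to $T$, then $\|Tx\| = \lim_i \|T_i x\| \le \|x\|$ for every $x$. So the complement of the unit ball is $\tau_{\rm SOT}$-open and cannot witness strictness. (Your justification ``norm-closed hence $\sigma_{\rm SOT}$-closed'' also runs the implication backwards: from $\sigma_{\rm SOT} \subseteq \tau_{\|\cdot\|_{\rm op}}$ one deduces that $\sigma_{\rm SOT}$-closed implies norm-closed, not the converse. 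The unit ball happens to be $\sigma_{\rm SOT}$-closed, but for the direct reason above, not the one you give.)

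You have also swapped which inclusion is the delicate one. The Josefson--Nissenzweig theorem (not ``Josefson--Nikod\'ym'') and the resulting rank-one sequence $T_n = x_0 \otimes f_n$ handle $\sigma_{\rm SOT} \subsetneq \tau_{\|\cdot\|_{\rm op}}$, exactly as the paper does; that part of your proposal is fine. The fact flagged in the preamble---that the weak*-topology on $X^*$ is not sequential when $\dim X = \infty$---is what is needed for $\tau_{\rm SOT} \subsetneq \sigma_{\rm SOT}$, and its content is quite different: one must produce a set $S \subseteq X^*$ with $0$ in its weak*-closure but with \emph{no sequence} in $S$ weak*-convergent to $0$. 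The paper builds such an $S$ explicitly as a union of finite $(1/k)$-nets of spheres of radius $k$ in $k$-dimensional subspaces of $X^*$, and then transfers it to $\mathscr{B}(X)$ via $f \mapsto x_0 \otimes f$; the set $\{x_0\otimes f : f\in S\}$ has $0$ in its $\tau_{\rm SOT}$-closure but not in its $\sigma_{\rm SOT}$-closure. No ``ball''-type set can do this job, precisely because balls and spheres defined by norm inequalities are already $\tau_{\rm SOT}$-closed.
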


\begin{proof}
In Proposition \ref{sigmasotistop} we saw that $\sigma_{\rm SOT}$ is indeed a topology on $\mathscr{B}(X)$. The containment $\tau_{\rm SOT} \subseteq \sigma_{\rm SOT}$ is evident. Let us then show that $\sigma_{\rm SOT} \subseteq \tau_{\| \cdot \|_{\rm op}}$. Let $C \subseteq \mathscr{B}(X)$ be a $\sigma_{\rm SOT}$-closed subset. In view of Lemma \ref{sigmasotclosedopen} it is enough to show that $C$ is closed in the operator norm. This is immediate: Let $(T_n)_{n=1}^\infty $ be a sequence in $C$ which converges to some $T \in \mathscr{B}(X)$ in the operator norm. Then $(T_n)_{n=1}^\infty $ clearly SOT-converges to $T$, hence $T\in C$ and thus $C$ is closed in the operator norm. Lastly, $\sigma_{\rm SOT}$ is Hausdorff plainly because $\tau_{\rm SOT} \subseteq \sigma_{\rm SOT}$ holds and $\tau_{\rm SOT}$ itself is Hausdorff. \smallskip

Assume now that $X$ is infinite-dimensional, we show that $\sigma_{\rm SOT}$ differs from both $\tau_{\rm SOT}$ and $\tau_{\| \cdot \|_{\rm op}}$.\smallskip

\textit{$(1)$ We first show $\sigma_{\rm SOT} \subsetneq \tau_{\| \cdot \|_{\rm op}}$.} As $X$ is infinite-dimensional, by the Josefson--Nissenzweig Theorem (\cite[Theorem~3.27]{hajek}), we may take a normalised sequence $(f_n)_{n=1}^\infty $ in $X^*$ which converges to $0$ in the $\text{weak}^*$-topology. Let $x_0 \in X$ be a unit vector, and define $T_n:=x_0 \otimes f_n$ for each $n \in \mathbb{N}$. As $T_nz= \langle z, f_n \rangle x_0$ for each $z \in X$ and $n \in \mathbb{N}$, it readily follows that $(T_n)_{n=1}^\infty $ SOT-converges to $0 \in \mathscr{B}(X)$, which by Lemma \ref{equivconv} means that it converges to $0$ in the $\sigma_{\rm SOT}$ topology. As $\|T_n \|=1$ for all $n \in \mathbb{N}$ it follows that $(T_n)_{n=1}^\infty $ cannot be convergent in the operator norm, and hence $\sigma_{\rm SOT} \subsetneq \tau_{\| \cdot \|_{\rm op}}$.\smallskip

\textit{$(2)$ We now show $\tau_{\rm SOT} \subsetneq \sigma_{\rm SOT}$.} As $X$ is infinite-dimensional, so is $X^*$, hence for each $k \in \mathbb{N}$ we may pick a subspace $Y_k$ of $X^*$ with $\dim Y_k = k$. Moreover, by compactness, we may choose a finite $(1/k)$-net $S^{(k)}$ of the sphere $\{h\in Y_k\colon \|h\|=k\}$ in $Y_k$. Let $S:= \bigcup_{k =1}^{\infty} S^{(k)}$. We \textit{claim} that $0 \in X^*$ is in the weak*-closure of $S$. 

Indeed, let $U$ be a weak*-open neighbourhood of $0 \in X^*$. We can take $x_1, \ldots, x_n\in X$ and $\varepsilon > 0$ such that
$$\{f\in X^*\colon \max_{1 \leqslant i \leqslant n} |\langle x_i, f\rangle| < \varepsilon \}\subseteq U.$$
Let $C:= \max_{1 \leqslant i \leqslant n} \|x_i \|$, and take $k \in \mathbb{N}$ sufficiently large with $C/k <\varepsilon$ and $n<k$. Hence there is $g\in Y_k$ such that $\langle x_i, g\rangle = 0$ for each $1 \leqslant i \leqslant n$. We may assume without loss of generality that $\|g\|=k$. We can pick $f \in S^{(k)}$ so that $\|f - g\| \leqslant 1/k$. Consequently,
$$|\langle x_i, f\rangle| = |\langle x_i, f - g\rangle| \leqslant \| x_i \| \|f - g\|\leqslant C/k <\varepsilon \quad (1 \leqslant i \leqslant n),$$
hence $f\in U$. This shows $U \cap S \neq \varnothing$ and thus the claim follows. \smallskip

However, no sequence from $S$ can converge to $0 \in X^*$ in the weak*-topology. For assume towards a contradiction that $(f_n)_{n=1}^{\infty}$ is a sequence in $S$ which converges to $0 \in X^*$ in the weak*-topology. Then $(f_n)_{n=1}^{\infty}$ must be bounded by the Banach--Steinhaus Theorem, hence by the definition of $S$ there is $i_0 \in \mathbb{N}$ such that the set $\{n \in \mathbb{N} \colon f_n \in S^{(i_0)} \}$ is infinite. Thus we may choose a subsequence $(f_n')_{n=1}^{\infty}$ of $(f_n)_{n=1}^{\infty}$ such that $f_n' \in S^{(i_0)}$ for each $n \in \mathbb{N}$. But $S^{(i_0)}$ is finite so $(f'_n)_{n =1}^{\infty}$ must be eventually constant, which contradicts the fact that it converges to $0 \in X^*$ in the weak*-topology. \smallskip

Now, let us fix a unit vector $x_0\in X$. It is clear from the above that $0 \in \mathscr{B}(X)$ belongs to the $\tau_{\rm SOT}$-closure but not to the $\sigma_{\rm SOT}$-closure of the set $\{ x_0 \otimes f \colon f \in S \}$, and hence $\tau_{\rm SOT} \subsetneq \sigma_{\rm SOT}$.
\end{proof}

From now on we may (and do) interchangeably say that a sequence $(T_n)_{n=1}^\infty $ in $\mathscr{B}(X)$ `SOT-converges' or `converges in the $\sigma_{\rm SOT}$ topology' or even `converges in the strong operator topology' to some $T \in \mathscr{B}(X)$.

We recall that the ideal of finite-rank operators $\mathscr{F}(X)$ is $\tau_{\rm SOT}$-dense in $\mathscr{B}(X)$ for every Banach space $X$. This puts the following observation into context.

\begin{Rem}\label{classicsigmasot}
Let $X$ be a Banach space. Consider the following statements:
\begin{romanenumerate}
    \item\label{obs1} $X$ has a Schauder basis;
    \item\label{obs2} $\mathscr{F}(X)$ is $\sigma_{\rm SOT}$-dense in $\mathscr{B}(X)$;
    \item\label{obs3} $X$ has the bounded approximation property.
\end{romanenumerate}
Then \eqref{obs1} $ \Rightarrow$ \eqref{obs2} $\Rightarrow$ \eqref{obs3}.\smallskip

 The implication \eqref{obs1} $ \Rightarrow$ \eqref{obs2} is elementary, as the sequence of coordinate projections SOT-converges to $I_X$.\smallskip

To see \eqref{obs2} $\Rightarrow$ \eqref{obs3}, let us note first that by the hypothesis there is a sequence of finite-rank operators $(T_n)_{n=1}^\infty $ that SOT-converges to the identity operator $I_X$. By the Banach--Steinhaus Theorem, $M:= \sup_{n\in \mathbb N}  \| T_n \| < \infty$, hence \cite[Proposition~4.3]{Ryan} implies that $X$ has the bounded approximation property with constant $M$.
\end{Rem}

In particular, Remark~\ref{classicsigmasot} immediately shows that whenever $X$ has a Schauder basis, there is no non-trivial, proper, two-sided ideal of $\mathscr{B}(X)$ which is $\sigma_{\rm SOT}$-closed. We can, however, always find such ideals whenever $X$ is non-separable:

\begin{Lem}\label{seprangesotclosed}
The norm-closed, two-sided ideal $\mathscr{X}(X)$ of operators with separable range is $\sigma_{\rm SOT}$-closed for any Banach space $X$.
\end{Lem}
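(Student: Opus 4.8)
The plan is to verify directly that $\mathscr{X}(X)$ is closed under SOT-convergent sequences; by Lemma~\ref{sigmasotclosedopen} (together with the definition of the topology $\sigma_{\rm SOT}$, whose closed sets are precisely the complements of the $\sigma$-SOT open sets) this is exactly what it means for $\mathscr{X}(X)$ to be $\sigma_{\rm SOT}$-closed. So I would start by fixing a sequence $(T_n)_{n=1}^\infty$ in $\mathscr{X}(X)$ which SOT-converges to some $T \in \mathscr{B}(X)$, and aim to show that $\overline{\Ran(T)}$ is separable.

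The key observation is that the range of the limit operator is trapped inside the closure of the union of the ranges of the $T_n$. Concretely, for every $x \in X$ we have $T_n x \to Tx$ in norm, and $T_n x \in \overline{\Ran(T_n)}$ for each $n$; hence $Tx$ lies in the closure of $\bigcup_{n=1}^\infty \overline{\Ran(T_n)}$, so that
\[
\Ran(T) \subseteq Z := \overline{\bigcup_{n=1}^\infty \overline{\Ran(T_n)}}.
\]

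To finish, note that each $\overline{\Ran(T_n)}$ is separable by the assumption $T_n \in \mathscr{X}(X)$, so $\bigcup_{n=1}^\infty \overline{\Ran(T_n)}$ is a countable union of separable subsets of the metric space $X$ and is therefore itself separable; consequently its closure $Z$ is separable as well. Since separability is hereditary for subsets of metric spaces, the set $\overline{\Ran(T)} \subseteq Z$ is separable, and therefore $T \in \mathscr{X}(X)$, as required.

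There is essentially no genuine obstacle here: the argument is a soft one, combining the definition of SOT-convergence with the stability of separability under countable unions and passage to subspaces. The only point that deserves an explicit word is the reduction at the very start — that checking closedness under SOT-convergent \emph{sequences} suffices for $\sigma_{\rm SOT}$-closedness — which is supplied by Lemma~\ref{sigmasotclosedopen}.
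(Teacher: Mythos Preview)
Your argument is correct and follows exactly the same route as the paper: one shows $\Ran(T) \subseteq \overline{\bigcup_{n=1}^\infty \Ran(T_n)}$ from SOT-convergence and then uses that a countable union of separable sets has separable closure. The only difference is that you spell out the (harmless) reduction via Lemma~\ref{sigmasotclosedopen} and the hereditary nature of separability, whereas the paper leaves these implicit.
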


\begin{proof}
Let $(T_n)_{n=1}^\infty $ be a sequence in $\mathscr{X}(X)$ which SOT-converges to some $T \in \mathscr{B}(X)$. This immediately yields $\Ran(T) \subseteq \overline{\bigcup_{n=1}^\infty  \Ran(T_n)}$, where the closure is taken with respect to the norm topology of $X$. As $\Ran(T_n)$ is separable for each $n \in \mathbb{N}$, the claim readily follows.
\end{proof}

We note that even if $\mathscr{S}_X(X)$ is the unique maximal ideal of $\mathscr{B}(X)$ for some Banach space $X$, it may or may not be $\sigma_{\rm SOT}$-closed:

\begin{Rem}\label{classicalseqsotclosed}
Let $X:= c_0$ or $X:= \ell_p$, where $1 \leqslant p < \infty$. Then $\mathscr{S}_X(X) = \mathscr{K}(X)$ is the unique maximal ideal of $\mathscr{B}(X)$ by Corollary \ref{classicalideal}, but it cannot be $\sigma_{\rm SOT}$-closed by Remark \ref{classicsigmasot} as $X$ has a Schauder basis. Let $X:= \ell_{\infty}$, then $\mathscr{S}_X(X) = \mathscr{X}(X)$ is the unique maximal ideal of $\mathscr{B}(X)$ by Corollary \ref{classicalideal}, and it is $\sigma_{\rm SOT}$-closed by Lemma \ref{seprangesotclosed}.
\end{Rem}

We are now ready to prove Theorem B.
\begin{proof}[Proof of Theorem B]
	Let $(T_n)_{n=1}^\infty $ be a sequence in $\mathscr{S}_{E_{\kappa}}(X)$ which converges to $T \in \mathscr{B}(X)$ in the strong operator topology.\smallskip
	
	Assume towards a contradiction that $T \notin \mathscr{S}_{E_{\kappa}}(X)$. Then there is a closed subspace $Z$ of $X$ with $Z \cong E_{\kappa}$ such that $T |_Z$ is bounded below by, say, $\varepsilon \in (0,1)$. Let $(z_{\alpha})_{\alpha< \kappa}$ be a normalised transfinite sequence in $Z$ equivalent to the standard unit vector basis of $E_{\kappa}$. By the linear independence, we may set
	\begin{align}
	x^{(s)}_{\alpha, \beta}:= \dfrac{z_{\alpha} - s z_{\beta}}{ \| z_{\alpha} - s z_{\beta} \|} \in Z \quad \left(\{ \alpha, \beta \} \in [\kappa]^2, \; s \in \mathbb{C} \right).
	\end{align}%
	In particular, $\|Tx_{\alpha, \beta}^{(s)} \| \geqslant \varepsilon$ for all $\{ \alpha, \beta \} \in [\kappa]^2$ and $s \in \mathbb{C}$. For each $s \in \mathbb{C}$ and $N \in \mathbb{N}$, we define
	\begin{align}
	\Lambda^{(s)}_N:= \left\{ \{ \alpha, \beta \} \in [\kappa]^2 \colon  (\forall n \geqslant N) \left(\| Tx^{(s)}_{\alpha, \beta} - T_n x^{(s)}_{\alpha, \beta} \| < \varepsilon/2 \right) \right\}.
	\end{align}
	We \textit{claim} that for each $s \in \mathbb{C}$ there is $N \in \mathbb{N}$ with $ | \Lambda^{(s)}_N | = \kappa$. For assume towards a~contradiction that there is $s \in \mathbb{C}$ such that $| \Lambda^{(s)}_N | < \kappa$ for all $N \in \mathbb{N}$, then $| \bigcup_{n=1}^\infty  \Lambda^{(s)}_N | < \kappa$ by Lemma \ref{cofinal}. This is equivalent to saying that the set
	\begin{align}
	\left\{ \{ \alpha, \beta \} \in [\kappa]^2 \colon  (\exists N \in \mathbb{N})(\forall n \geqslant N) \left(\| Tx^{(s)}_{\alpha, \beta} - T_n x^{(s)}_{\alpha, \beta} \| < \varepsilon/2 \right) \right\}
	\end{align}
	has cardinality strictly less than $\kappa$. However, this is impossible. Indeed, for each $\{ \alpha, \beta \} \in [\kappa]^2$ the sequence $(T_n x^{(s)}_{\alpha, \beta})_{n=1}^\infty $ converges to $Tx^{(s)}_{\alpha, \beta} \in X$, hence there is $N \in \mathbb{N}$ such that $\| Tx^{(s)}_{\alpha, \beta} - T_n x^{(s)}_{\alpha, \beta} \| < \varepsilon /2$ for every $n \geqslant N$. This shows the claim.
	
	Fix $s \in \mathbb{C}$ and let us take $N \in \mathbb{N}$ with $| \Lambda^{(s)}_N | = \kappa$. Let $n \geqslant N$ be fixed. Then
	\begin{align}
	\varepsilon /2 = \varepsilon - \varepsilon /2 \leqslant \| T x^{(s)}_{\alpha, \beta} \| - \| Tx^{(s)}_{\alpha, \beta} - T_n x^{(s)}_{\alpha, \beta} \| \leqslant \| T_n x^{(s)}_{\alpha, \beta} \| \quad \left(\{ \alpha, \beta \} \in \Lambda^{(s)}_N \right).
	\end{align}
	We split the rest of the proof into cases.
	\begin{itemize}
		\item Suppose we are in case (1) or (2). We set the parameter $s:=0$, then simply $x^{(s)}_{\alpha, \beta} = z_{\alpha}$ for each $\{\alpha, \beta \} \in [\kappa]^2$. So there is $\Lambda \subseteq \kappa$ with $| \Lambda | = \kappa$ such that $\varepsilon /2 \leqslant \| T_n z_{\alpha} \|$ for each $\alpha \in \Lambda$.
		\begin{itemize}
			\item Suppose we are in the case of (1), that is, $E_{\kappa}= c_0(\kappa)$ and $X$ has an M-basis. Then Proposition \ref{disjointimage} yields that there is $\Gamma \subseteq \Lambda$ with $| \Gamma | = \kappa$ such that $(T_n z_{\alpha})_{\alpha \in \Gamma}$ consists of disjointly supported vectors. Then by \cite[Remark~1 on p.~30]{rosenthalmeasure}, there is a closed subspace $Y$ of $X$ such that $Y \cong c_0(\Gamma) \cong c_0(\kappa)$ and $T_n |_{Y}$ is bounded below. Hence $T_n \notin \mathscr{S}_{c_0(\kappa)}(X)$, a contradiction.
			\item Suppose we are in the case of (2), that is, $E_{\kappa}= \ell_p(\kappa)$ and $X= \ell_p(\lambda)$ where $\lambda \geqslant \kappa$ and $p \in (1, \infty)$. Then by Corollary \ref{ellprosenthal} we have $T_n \notin \mathscr{S}_{\ell_p(\kappa)}(\ell_p(\lambda))$, a~contradiction.
		\end{itemize}\smallskip
		\item Suppose we are in case (3), that is, $E_{\kappa}= \ell_1(\kappa)$ and $X$ is any Banach space. We set the parameter $s:=1$, then $x_{\alpha, \beta}^{(s)} = (z_{\alpha} - z_{\beta}) \| z_{\alpha} - z_{\beta} \|^{-1}$. As $(z_{\alpha})_{\alpha < \kappa}$ is equivalent to the standard unit vector basis of $\ell_1(\kappa)$, there is $\delta \in (0,1)$ such that $\delta \leqslant \| z_{\alpha} - z_{\beta} \|$ for each distinct $\alpha, \beta < \kappa$. Consequently 
		\begin{align}
		\varepsilon /2 &\leqslant \| T_n x^{(s)}_{\alpha, \beta} \| = \| z_{\alpha} - z_{\beta} \|^{-1} \| T_n z_{\alpha} -T_n z_{\beta} \| \notag \\
		&\leqslant \delta^{-1} \| T_n z_{\alpha} -T_n z_{\beta} \| \quad \left(\{ \alpha, \beta \} \in \Lambda^{(s)}_N \right),
		\end{align}
		hence $\varepsilon \delta /2 \leqslant \| T_n z_{\alpha} -T_n z_{\beta} \|$. Thus \cite[Corollary~on~p.~29]{rosenthalmeasure} implies that there is a~closed (complemented) subspace $Y$ of $X$ with $Y \cong \ell_1(\Lambda^{(s)}_N) \cong \ell_1(\kappa)$ such that $T_n |_Y$ is bounded below. Hence $T_n \notin \mathscr{S}_{\ell_1(\kappa)}(X)$, a contradiction.
	\end{itemize}
\end{proof}

One might wonder whether Theorem B holds for \textit{all} uncountable cardinals. We shall see in Lemma \ref{notsigmasotclosed} that this is not the case. To demonstrate this, we will use the following auxiliary lemma: \smallskip

\begin{Lem}\label{projinideal}
    Let $\lambda, \kappa$ be infinite cardinals with $\lambda \geqslant \kappa$, and let $\Lambda \subseteq \lambda$. Consider one of the following cases:
	\begin{itemize}
	    \item $E_{\lambda}:= \ell_p(\lambda)$ and $E_{\kappa}:= \ell_p(\kappa)$ for $p \in [1, \infty)$;
	    \item $E_{\lambda}:= \ell_{\infty}^c(\lambda)$ and $E_{\kappa}:= \ell_{\infty}^c(\kappa)$;
	    \item $E_{\lambda}:= c_0(\lambda)$ and $E_{\kappa}:= c_0(\kappa)$.
	\end{itemize}
	Then $P_{\Lambda} \in \mathscr{S}_{E_{\kappa}}(E_{\lambda})$ if and only if $| \Lambda | < \kappa$.
\end{Lem}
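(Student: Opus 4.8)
The plan is to prove the two implications separately, each by contraposition; the only facts about the spaces I will use are that $\Ran(P_{\Lambda})=\{x\in E_{\lambda}\colon\support(x)\subseteq\Lambda\}$ is isometrically isomorphic to $E_{|\Lambda|}$, and that $(e_{\alpha})_{\alpha<\kappa}$ denotes the canonical (normalised) unit vectors of $E_{\kappa}$. For the implication $|\Lambda|\geqslant\kappa\Rightarrow P_{\Lambda}\notin\mathscr{S}_{E_{\kappa}}(E_{\lambda})$, I would fix a subset $\Lambda_{0}\subseteq\Lambda$ with $|\Lambda_{0}|=\kappa$ and observe that $W:=\Ran(P_{\Lambda_{0}})$ is a closed subspace of $E_{\lambda}$, isometrically isomorphic to $E_{\kappa}$, on which $P_{\Lambda}$ acts as the identity (since $\support(x)\subseteq\Lambda_{0}\subseteq\Lambda$ for every $x\in W$). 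In particular $P_{\Lambda}\vert_{W}$ is bounded below, so $P_{\Lambda}\notin\mathscr{S}_{E_{\kappa}}(E_{\lambda})$. This settles one direction for all three families at once.

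For the converse, $P_{\Lambda}\notin\mathscr{S}_{E_{\kappa}}(E_{\lambda})\Rightarrow|\Lambda|\geqslant\kappa$, I would take a closed subspace $W\cong E_{\kappa}$ of $E_{\lambda}$ on which $P_{\Lambda}$ is bounded below; then $P_{\Lambda}\vert_{W}$ is an isomorphism onto its closed range, so $E_{\kappa}$ is isomorphic to a closed subspace of $\Ran(P_{\Lambda})\cong E_{|\Lambda|}$. If $|\Lambda|$ were finite this would embed an infinite-dimensional space into a finite-dimensional one, which is impossible, so $|\Lambda|$ is infinite. When $E_{\lambda}=\ell_{p}(\lambda)$ or $E_{\lambda}=c_{0}(\lambda)$ the proof ends here: the density character of $E_{\mu}$ equals $\mu$ for every infinite cardinal $\mu$, and density characters do not grow under passing to closed subspaces, whence $\kappa=\density(E_{\kappa})\leqslant\density(E_{|\Lambda|})=|\Lambda|$, contradicting $|\Lambda|<\kappa$.

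\textbf{The main obstacle} is the case $E_{\lambda}=\ell_{\infty}^{c}(\lambda)$, $E_{\kappa}=\ell_{\infty}^{c}(\kappa)$, where the density-character shortcut is unavailable: $\Ran(P_{\Lambda})$ contains an isometric copy of $\ell_{\infty}$ (supported on any countably infinite subset of $\Lambda$), so its density character is at least $2^{\aleph_{0}}$ and can exceed $\kappa$ even though $|\Lambda|<\kappa$. Instead I would argue directly with the unit vectors: fix an isomorphic embedding $\Psi\colon E_{\kappa}\to\Ran(P_{\Lambda})$, put $y_{\alpha}:=\Psi e_{\alpha}$ and $c:=\|\Psi^{-1}\|^{-1}>0$, so that $\|y_{\alpha}\|_{\infty}\geqslant c$ and $\support(y_{\alpha})\subseteq\Lambda$ for all $\alpha<\kappa$, and for each $\alpha$ pick $\delta(\alpha)\in\Lambda$ with $|y_{\alpha}(\delta(\alpha))|>c/2$. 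Since $|\Lambda|<\kappa$ and $|\Lambda|$ is infinite, the fibres of $\delta\colon\kappa\to\Lambda$ cannot all be finite (otherwise $\kappa\leqslant|\Lambda|\cdot\aleph_{0}=|\Lambda|$), so there are $\delta_{0}\in\Lambda$ and distinct ordinals $\alpha_{1},\alpha_{2},\dots<\kappa$ with $|y_{\alpha_{n}}(\delta_{0})|>c/2$ for every $n$. Choosing unimodular scalars $\theta_{n}$ with $\theta_{n}y_{\alpha_{n}}(\delta_{0})=|y_{\alpha_{n}}(\delta_{0})|$ and using $\bigl\|\sum_{n=1}^{N}\theta_{n}e_{\alpha_{n}}\bigr\|_{\ell_{\infty}^{c}(\kappa)}=1$, one gets, for every $N\in\mathbb{N}$,
\[
\|\Psi\|\;\geqslant\;\Bigl\|\Psi\bigl(\textstyle\sum_{n=1}^{N}\theta_{n}e_{\alpha_{n}}\bigr)\Bigr\|_{\infty}\;\geqslant\;\Bigl|\textstyle\sum_{n=1}^{N}\theta_{n}y_{\alpha_{n}}(\delta_{0})\Bigr|\;=\;\textstyle\sum_{n=1}^{N}|y_{\alpha_{n}}(\delta_{0})|\;>\;\tfrac{Nc}{2},
\]
which is absurd. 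Hence no such $\Psi$ exists once $|\Lambda|<\kappa$, and the converse holds in this case as well. Apart from this pigeonhole step everything is routine; the point worth flagging is precisely that for $\ell_{\infty}^{c}$ one must not fall back on density characters.
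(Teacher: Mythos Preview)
Your proof is correct and follows essentially the same route as the paper's: both directions are done by contraposition, and the key observation in each direction is the same (in one direction $P_{\Lambda}$ acts as the identity on a copy of $E_{\kappa}$ inside $\Ran(P_{\Lambda})$; in the other, an embedding $E_{\kappa}\hookrightarrow E_{|\Lambda|}$ forces $|\Lambda|\geqslant\kappa$).

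The one noteworthy difference is that the paper treats the implication ``$E_{\kappa}$ embeds into $E_{|\Lambda|}$ $\Rightarrow$ $|\Lambda|\geqslant\kappa$'' as evident in all three cases with a single sentence, whereas you correctly flag that for $E=\ell_{\infty}^{c}$ the density-character shortcut genuinely fails (since $\ell_{\infty}^{c}(\mu)$ already contains $\ell_{\infty}$ and hence has density at least $2^{\aleph_{0}}$) and supply a self-contained pigeonhole argument via the evaluation functionals. Your argument is a valid way to fill in what the paper leaves implicit; an alternative in the spirit of the paper would be to note that $c_{0}(\kappa)\subseteq\ell_{\infty}^{c}(\kappa)$, so an embedding of $\ell_{\infty}^{c}(\kappa)$ into $\ell_{\infty}^{c}(|\Lambda|)$ yields a bounded family $(y_{\alpha})_{\alpha<\kappa}$ equivalent to the $c_{0}(\kappa)$-basis inside $\ell_{\infty}^{c}(|\Lambda|)$, and then the same evaluation-functional trick (or Rosenthal's disjointification) forces $|\Lambda|\geqslant\kappa$. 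Either way, your extra care here is warranted and does not change the overall structure of the proof.
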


\begin{proof}
We prove both directions by way of a contraposition.

	Suppose $|\Lambda| \geqslant \kappa$. Let $E:= \Ran(P_{\Lambda}) \cong E_{|\Lambda|}$, clearly $P_{\Lambda} |_{E}$ is bounded below. Hence $P_{\Lambda} \notin \mathscr{S}_{E_{|\Lambda|}}(E_{\lambda})$, thus in particular $P_{\Lambda} \notin \mathscr{S}_{E_{\kappa}}(E_{\lambda})$.
	
	Suppose that $P_{\Lambda} \notin \mathscr{S}_{E_{\kappa}}(E_{\lambda})$. Then there is a closed subspace $E$ of $E_{\lambda}$ with $E \cong E_{\kappa}$ such that $P_{\Lambda} |_E$ is bounded below. Thus
\begin{align}
E_{\kappa} \cong E \cong \Ran(P_{\Lambda} |_E) \subseteq \Ran(P_{\Lambda}) \cong E_{|\Lambda|},
\end{align}
consequently $E_{\kappa}$ embeds into $E_{|\Lambda|}$. Hence $|\Lambda | \geqslant \kappa$ must hold.
\end{proof}

\begin{Lem}\label{notsigmasotclosed}
	Let $\lambda, \kappa$ be infinite cardinals such that $\lambda \geqslant \kappa$ and $\cf(\kappa)= \omega$.
	Consider one of the following cases:
	\begin{itemize}
	    \item $E_{\lambda}:= \ell_p(\lambda)$ and $E_{\kappa}:= \ell_p(\kappa)$ for $p \in [1, \infty)$;
	    \item $E_{\lambda}:= c_0(\lambda)$ and $E_{\kappa}:= c_0(\kappa)$.
	\end{itemize}
	The norm closed, two-sided ideal $\mathscr{S}_{E_{\kappa}}(E_{\lambda})$ is not $\sigma_{\rm SOT}$-closed.
\end{Lem}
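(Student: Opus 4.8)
The plan is to use the countable cofinality of $\kappa$ to manufacture a sequence of coordinate projections that lies in $\mathscr{S}_{E_{\kappa}}(E_{\lambda})$ but whose strong operator limit is a coordinate projection outside it; Lemma~\ref{projinideal} is exactly the device that decides membership at both ends.

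First I would invoke $\cf(\kappa) = \omega$ to fix a strictly increasing sequence of cardinals $(\kappa_n)_{n=1}^\infty$ with $\kappa_n < \kappa$ for every $n$ and $\sup_n \kappa_n = \kappa$. Since $\lambda \geqslant \kappa$, choose a subset $\Lambda \subseteq \lambda$ with $|\Lambda| = \kappa$ and express it as an increasing union $\Lambda = \bigcup_{n=1}^\infty \Lambda_n$ with $|\Lambda_n| = \kappa_n$ --- for instance by well-ordering $\Lambda$ in order type $\kappa$ and letting $\Lambda_n$ be the initial segment of cardinality $\kappa_n$. By Lemma~\ref{projinideal}, $P_{\Lambda_n} \in \mathscr{S}_{E_{\kappa}}(E_{\lambda})$ for each $n$ (because $|\Lambda_n| = \kappa_n < \kappa$), whereas $P_{\Lambda} \notin \mathscr{S}_{E_{\kappa}}(E_{\lambda})$ (because $|\Lambda| = \kappa$).

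Next I would verify that $(P_{\Lambda_n})_{n=1}^\infty$ SOT-converges to $P_{\Lambda}$. Fix $x \in E_{\lambda}$; then $P_{\Lambda}x - P_{\Lambda_n}x = P_{\Lambda \setminus \Lambda_n}x$, and the sets $\Lambda \setminus \Lambda_n$ decrease with empty intersection since $\bigcup_n \Lambda_n = \Lambda$. If $E_{\lambda} = \ell_p(\lambda)$, then given $\varepsilon > 0$ there is a finite $F \subseteq \Lambda$ with $\sum_{\alpha \in \Lambda \setminus F} |x(\alpha)|^p < \varepsilon^p$; as $F$ is finite and $(\Lambda_n)$ increases to $\Lambda$, we have $F \subseteq \Lambda_n$ for all large $n$, so $\|P_{\Lambda \setminus \Lambda_n}x\|^p = \sum_{\alpha \in \Lambda \setminus \Lambda_n}|x(\alpha)|^p \leqslant \sum_{\alpha \in \Lambda \setminus F}|x(\alpha)|^p < \varepsilon^p$. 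If $E_{\lambda} = c_0(\lambda)$, then the set $\{\alpha \in \Lambda : |x(\alpha)| \geqslant \varepsilon\}$ is finite, hence contained in some $\Lambda_n$, so $\|P_{\Lambda \setminus \Lambda_m}x\| = \sup_{\alpha \in \Lambda \setminus \Lambda_m}|x(\alpha)| < \varepsilon$ for all $m \geqslant n$. In either case $P_{\Lambda_n}x \to P_{\Lambda}x$ in norm, as wanted.

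Putting this together, $(P_{\Lambda_n})_{n=1}^\infty$ is a sequence in $\mathscr{S}_{E_{\kappa}}(E_{\lambda})$ that SOT-converges to $P_{\Lambda} \notin \mathscr{S}_{E_{\kappa}}(E_{\lambda})$, so $\mathscr{S}_{E_{\kappa}}(E_{\lambda})$ is not $\sigma_{\rm SOT}$-closed. The only step that is not pure bookkeeping is the SOT-convergence verification, where one uses the ``almost countably supported'' nature of vectors in $\ell_p(\lambda)$ and $c_0(\lambda)$; this is the mild technical core, and everything else is a direct application of Lemma~\ref{projinideal} and the definition of countable cofinality.
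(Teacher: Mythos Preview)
Your proof is correct and follows essentially the same approach as the paper: both arguments use the countable cofinality of $\kappa$ to exhaust a set of size $\kappa$ by an increasing chain of sets of strictly smaller cardinality, then apply Lemma~\ref{projinideal} to show the corresponding coordinate projections lie in $\mathscr{S}_{E_{\kappa}}(E_{\lambda})$ while their SOT-limit does not. The only cosmetic difference is that the paper works directly with initial ordinal segments $\kappa_n \subseteq \kappa \subseteq \lambda$ (using the von Neumann convention) and treats the $c_0$ case as ``entirely analogous'', whereas you pick an abstract $\Lambda$ of cardinality $\kappa$ and spell out the $c_0$ case explicitly.
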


\begin{proof}
	We only show the $E_{\lambda}= \ell_p(\lambda)$, $E_{\kappa}= \ell_p(\kappa)$-case, the other case follows from an entirely analogous argument. As $\cf(\kappa) = \omega$, we can take a sequence of (regular) cardinals $(\kappa_n)_{n=1}^\infty $ such that $\kappa = \lim_{n\to \infty} \kappa_n$ and $\kappa_n < \kappa_{n+1} < \kappa$ for each $n \in \mathbb{N}$. (If $\kappa = \omega$ then we set $\kappa_n := n$, if $\kappa$ is uncountable then $\kappa_n$ can be chosen uncountable for each $n \in \mathbb{N}$.) We \textit{claim} that $(P_{\kappa_n})_{n=1}^\infty $ converges to $P_{\kappa} \in \mathscr{B}(E_{\lambda})$ in the strong operator topology. Let us note that this is sufficient to prove the lemma. Indeed, $P_{\kappa} \notin \mathscr{S}_{E_{\kappa}}(E_{\lambda})$ and $P_{\kappa_n} \in \mathscr{S}_{E_{\kappa}}(E_{\lambda})$ for each $n \in \mathbb{N}$ by Lemma \ref{projinideal}. In order to show the claim, let us fix an $x \in E_{\lambda}$ and $\varepsilon \in (0,1)$. Take a finite set $F \subseteq \lambda$ such that $\sum_{\alpha \in \lambda \setminus F} |x(\alpha)|^p < \varepsilon^p$. As $\kappa = \lim_{n\to \infty}\kappa_n$ and $F$ is finite, there is $N \in \mathbb{N}$ such that $F \cap \kappa \subseteq \kappa_N$. Consequently
	\begin{align}
	\left\| P_{\kappa}x - P_{\kappa_n}x \right\|^p = \|P_{\kappa \setminus \kappa_n} x \|^p = \sum\limits_{\alpha \in \kappa \setminus \kappa_n} |x(\alpha)|^p \leqslant \sum\limits_{\alpha \in \lambda \setminus F} |x(\alpha)|^p < \varepsilon^p \quad (n \geqslant N),
	\end{align}
	which concludes the claim.\end{proof}

We leave open the question of whether the above lemma holds for $E_{\lambda} =\ell_{\infty}^c(\lambda)$ and $E_{\kappa} = \ell_{\infty}^c(\kappa)$ when $\lambda$ is uncountable and $\lambda \geqslant \kappa$ with $\cf(\kappa)= \omega$. Nonetheless, we make the following remark:

\begin{Rem}
We note that the proof of Lemma \ref{notsigmasotclosed} does not carry over to the $\ell_{\infty}^c$-case. Indeed, let $\lambda$ be an uncountable cardinal and let $\kappa$ be a cardinal with $\lambda \geqslant \kappa$ and $\cf(\kappa)= \omega$. Let $E_{\lambda}:= \ell_{\infty}^c(\lambda)$ and $E_{\kappa}:= \ell_{\infty}^c(\kappa)$. Take a sequence of cardinals $(\kappa_n)_{n=1}^\infty $ with $\kappa = \lim_{n\to\infty} \kappa_n$ and $\kappa_n < \kappa_{n+1} < \kappa$ for each $n \in \mathbb{N}$. We \textit{claim} that $(P_{\kappa_n})_{n=1}^\infty $ does not $\sigma_{\rm SOT}$-converge to $P_{\kappa} \in \mathscr{B}(E_{\lambda})$. To see this we first note that $\mathds{1}_C \in E_{\lambda}$ for each countable $C \subseteq \lambda$. Hence for any countable set $C \subseteq \lambda$ and any $n \in \mathbb{N}$,
\begin{align}
(P_{\kappa} \mathds{1}_C - P_{\kappa_n} \mathds{1}_C)(\alpha) = (P_{\kappa \setminus \kappa_n} \mathds{1}_C)(\alpha) = \left\{
\begin{array}{l l}
1 & \quad \text{if  } \alpha \in C \cap (\kappa \setminus \kappa_n) \\
0 & \quad \text{otherwise} \\
\end{array} \right. \quad (\alpha<\lambda).
\end{align}
Assume towards a contradiction that $(P_{\kappa_n})_{n=1}^\infty $ does $\sigma_{\rm SOT}$-converge to $P_{\kappa} \in \mathscr{B}(E_{\lambda})$. Then from the above we conclude that for each countable $C \subseteq \lambda$ there is $N \in \mathbb{N}$ such that $C \cap (\kappa \setminus \kappa_n) = \varnothing$ whenever $n \geqslant N$. Let $C:= \{ \kappa_m \colon  m \in \mathbb{N} \}$, then there is $N \in \mathbb{N}$ such that $C \subseteq (\lambda \setminus \kappa) \cup \kappa_N$. This is clearly impossible as $ \kappa_n \in \kappa$ and $\kappa_n \notin \kappa_N$ for each $n > N$.
\end{Rem}

Theorem B, Remark \ref{classicalseqsotclosed}, and Lemma \ref{notsigmasotclosed} yield together a characterisation of $\sigma_{\rm SOT}$-closedness of ideals of the form $\mathscr{S}_{E_{\kappa}}(E_{\lambda})$.

\begin{Cor}
	Let $\lambda, \kappa$ be infinite cardinals such that $\lambda \geqslant \kappa$. Consider one of the following cases:
	\begin{itemize}
	    \item $E_{\lambda}:= \ell_p(\lambda)$ and $E_{\kappa}:= \ell_p(\kappa)$ for $p \in [1, \infty)$;
	    \item $E_{\lambda}:= c_0(\lambda)$ and $E_{\kappa}:= c_0(\kappa)$.
	\end{itemize}
	The norm-closed, two-sided ideal $\mathscr{S}_{E_{\kappa}}(E_{\lambda})$ is $\sigma_{\rm SOT}$-closed if and only if $\cf(\kappa) > \omega$.
\end{Cor}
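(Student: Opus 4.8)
The plan is to derive the equivalence by assembling the two implications from results already in hand; the corollary contains essentially no new content beyond bookkeeping. Throughout I would keep in mind that in each of the two listed cases $\mathscr{S}_{E_\kappa}(E_\lambda)$ is indeed a norm-closed two-sided ideal, so that only its behaviour with respect to $\sigma_{\rm SOT}$ is at issue.

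For the implication ``$\cf(\kappa) > \omega \Rightarrow \mathscr{S}_{E_\kappa}(E_\lambda)$ is $\sigma_{\rm SOT}$-closed'' I would check that each case of the corollary is literally subsumed under a case of Theorem~B. First observe that the standard unit vector basis $(e_\alpha)_{\alpha<\lambda}$ of $c_0(\lambda)$ (respectively $\ell_p(\lambda)$), paired with the coordinate functionals, is an M-basis. Hence if $E_\lambda = c_0(\lambda)$ and $E_\kappa = c_0(\kappa)$ we are in case~(1) of Theorem~B with $X := c_0(\lambda)$; if $E_\lambda = \ell_p(\lambda)$ with $p \in (1,\infty)$ we are in case~(2) of Theorem~B (the hypothesis $\lambda \geqslant \kappa$ there being exactly our standing assumption); and if $p = 1$ we are in case~(3) of Theorem~B with $X := \ell_1(\lambda)$. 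In every instance Theorem~B gives that $\mathscr{S}_{E_\kappa}(E_\lambda)$ is $\sigma_{\rm SOT}$-closed.

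For the converse, suppose $\cf(\kappa) = \omega$. This is precisely the hypothesis of Lemma~\ref{notsigmasotclosed}, whose two bulleted cases coincide verbatim with those of the present corollary, so Lemma~\ref{notsigmasotclosed} directly yields that $\mathscr{S}_{E_\kappa}(E_\lambda)$ is not $\sigma_{\rm SOT}$-closed (a sequence of coordinate projections $(P_{\kappa_n})_{n=1}^\infty$ inside the ideal SOT-converging to $P_\kappa$ outside it). In the degenerate sub-case $\kappa = \lambda = \omega$ one may additionally note consistency with Remark~\ref{classicalseqsotclosed}, where $\mathscr{S}_{E_\omega}(E_\omega) = \mathscr{K}(E_\omega)$ was seen not to be $\sigma_{\rm SOT}$-closed since $E_\omega$ has a Schauder basis. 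Combining the two implications gives the asserted equivalence.

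I do not expect a genuine obstacle here: all the real work was done in Theorem~B (through Proposition~\ref{disjointimage}, Corollary~\ref{ellprosenthal}, Lemma~\ref{ell1approx}, and Rosenthal's disjointification results) and in Lemma~\ref{notsigmasotclosed} (through Lemma~\ref{projinideal}). The only care required is in the matching of hypotheses — in particular confirming that $c_0(\lambda)$ carries an M-basis so that case~(1) of Theorem~B is applicable, and observing that the exclusion of $\ell_\infty^c(\lambda)$ from the corollary mirrors its exclusion from Lemma~\ref{notsigmasotclosed}, consistent with the open question recorded there.
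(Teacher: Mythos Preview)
Your proposal is correct and matches the paper's own argument essentially verbatim: the paper states (without a displayed proof) that the corollary follows by combining Theorem~B, Remark~\ref{classicalseqsotclosed}, and Lemma~\ref{notsigmasotclosed}, which is precisely the case-matching you carry out. Your observation that Lemma~\ref{notsigmasotclosed} already absorbs the $\kappa=\omega$ sub-case (so that Remark~\ref{classicalseqsotclosed} is merely a consistency check) is also accurate.
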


To conclude this section, we demonstrate how the topology $\sigma_{\rm SOT}$ may be used to gain `algebraic' information about ideals of $\mathscr{B}(E_{\lambda})$.

\begin{Prop}
	Let $\lambda,\kappa$ be infinite cardinals with $\lambda \geqslant \kappa$. Consider one of the following cases:
	\begin{itemize}
	    \item $E_{\lambda}:= \ell_p(\lambda)$ and $E_{\kappa}:= \ell_p(\kappa)$ for $p \in [1, \infty)$;
	    \item $E_{\lambda}:= c_0(\lambda)$ and $E_{\kappa}:= c_0(\kappa)$.
	\end{itemize}
	If $\cf(\kappa) = \omega$, then $\mathscr{S}_{E_{\kappa^+}}(E_{\lambda})$ is singly generated as a (norm-)closed, two-sided ideal.
\end{Prop}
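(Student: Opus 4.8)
The plan is to exhibit an explicit single generator, namely the coordinate projection $P_{\kappa}\in\mathscr{B}(E_{\lambda})$ attached to the set $\kappa\subseteq\lambda$ in the notation fixed just before Lemma~\ref{ell1approx}, so that $\Ran(P_{\kappa})$ is isometrically isomorphic to $E_{\kappa}$. Writing $\mathscr{I}$ for the norm-closed, two-sided ideal of $\mathscr{B}(E_{\lambda})$ generated by $P_{\kappa}$, I would prove $\mathscr{I}=\mathscr{S}_{E_{\kappa^{+}}}(E_{\lambda})$ by establishing the two inclusions separately.

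For $\mathscr{I}\subseteq\mathscr{S}_{E_{\kappa^{+}}}(E_{\lambda})$: since $\mathscr{S}_{E_{\kappa^{+}}}(E_{\lambda})$ is a norm-closed two-sided ideal, it suffices to check that $P_{\kappa}$ itself lies in it. If $P_{\kappa}$ were bounded below on a closed subspace $W$ of $E_{\lambda}$ with $W\cong E_{\kappa^{+}}$, then $E_{\kappa^{+}}\cong\Ran(P_{\kappa}\vert_{W})\subseteq\Ran(P_{\kappa})\cong E_{\kappa}$, which is impossible since the density character of $E_{\kappa^{+}}$ strictly exceeds that of $E_{\kappa}$; hence $P_{\kappa}\in\mathscr{S}_{E_{\kappa^{+}}}(E_{\lambda})$. (When $\lambda\geqslant\kappa^{+}$ this is precisely Lemma~\ref{projinideal} applied with $|\kappa|=\kappa<\kappa^{+}$.)

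For the reverse inclusion $\mathscr{S}_{E_{\kappa^{+}}}(E_{\lambda})\subseteq\mathscr{I}$, I would invoke Theorem~\ref{kappasingingen} with $T:=P_{\kappa}$. Its hypothesis holds because $P_{\kappa}$ restricted to $\Ran(P_{\kappa})\cong E_{\kappa}$ is the identity, and in particular bounded below, so $P_{\kappa}\notin\mathscr{S}_{E_{\kappa}}(E_{\lambda})$ (the other direction of Lemma~\ref{projinideal}); the theorem then gives that $\mathscr{S}_{E_{\kappa^{+}}}(E_{\lambda})$ is contained in the closed two-sided ideal generated by $P_{\kappa}$, i.e.\ in $\mathscr{I}$. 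Combining the two inclusions yields $\mathscr{I}=\mathscr{S}_{E_{\kappa^{+}}}(E_{\lambda})$, so this ideal is singly generated.

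The hypothesis $\cf(\kappa)=\omega$ is what makes this generator a genuinely $\sigma_{\rm SOT}$-flavoured object, which is the point of placing the proposition here: writing $\kappa=\sup_{n}\kappa_{n}$ with $\kappa_{n}<\kappa_{n+1}<\kappa$ as in the proof of Lemma~\ref{notsigmasotclosed}, each $P_{\kappa_{n}}$ belongs to $\mathscr{S}_{E_{\kappa}}(E_{\lambda})$ by Lemma~\ref{projinideal}, while $(P_{\kappa_{n}})_{n=1}^{\infty}$ SOT-converges to $P_{\kappa}$; thus the single generator of $\mathscr{S}_{E_{\kappa^{+}}}(E_{\lambda})$ is exactly an operator exhibiting the failure of $\mathscr{S}_{E_{\kappa}}(E_{\lambda})$ to be $\sigma_{\rm SOT}$-closed. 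I do not expect a real obstacle here: all the substantive work is already contained in Theorem~\ref{kappasingingen} (and the complementable-homogeneity results it rests on), and the only point needing a moment's care is the verification via Lemma~\ref{projinideal} that $P_{\kappa}$ sits strictly between $\mathscr{S}_{E_{\kappa}}(E_{\lambda})$ and $\mathscr{S}_{E_{\kappa^{+}}}(E_{\lambda})$ — equivalently, that $\mathscr{S}_{E_{\kappa^{+}}}(E_{\lambda})$ admits no generator already lying in $\mathscr{S}_{E_{\kappa}}(E_{\lambda})$.
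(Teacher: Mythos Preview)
Your argument is correct, and in fact it proves more than the proposition as stated: nothing in your two inclusions uses $\cf(\kappa)=\omega$, so you have shown that $\mathscr{S}_{E_{\kappa^{+}}}(E_{\lambda})$ is singly generated (by $P_{\kappa}$) for \emph{every} infinite $\kappa\leqslant\lambda$. The paper's proof is different in spirit. It invokes Lemma~\ref{notsigmasotclosed} to produce an abstract SOT-limit $T\notin\mathscr{S}_{E_{\kappa}}(E_{\lambda})$ of a sequence in $\mathscr{S}_{E_{\kappa}}(E_{\lambda})$, uses Theorem~\ref{kappasingingen} as you do for the inclusion $\mathscr{S}_{E_{\kappa^{+}}}(E_{\lambda})\subseteq\langle T\rangle$, and then---this is the key distinction---obtains $T\in\mathscr{S}_{E_{\kappa^{+}}}(E_{\lambda})$ from Theorem~B (the $\sigma_{\rm SOT}$-closedness of $\mathscr{S}_{E_{\kappa^{+}}}(E_{\lambda})$, valid since $\cf(\kappa^{+})>\omega$), rather than from the direct density-character computation of Lemma~\ref{projinideal}. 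So the paper's route is there to \emph{showcase} the $\sigma_{\rm SOT}$ machinery developed in that subsection, and the cofinality hypothesis is what makes Lemma~\ref{notsigmasotclosed} applicable; your route is more elementary, names the generator explicitly, and does not need the hypothesis at all. You clearly saw this yourself in your final paragraph: the proposition, as placed, is an application of the $\sigma_{\rm SOT}$ theory rather than the sharpest possible statement.
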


\begin{proof}
	By Lemma \ref{notsigmasotclosed}, the ideal $\mathscr{S}_{E_{\kappa}}(E_{\lambda})$ is not $\sigma_{\rm SOT}$-closed, hence there is a sequence $(T_n)_{n=1}^\infty $ in $\mathscr{S}_{E_{\kappa}}(E_{\lambda})$ which converges to some $T \in \mathscr{B}(E_{\lambda})$ in the strong operator topology, where $T \notin \mathscr{S}_{E_{\kappa}}(E_{\lambda})$. On the one hand Theorem \ref{kappasingingen} implies that $\mathscr{S}_{E_{\kappa^+}}(E_{\lambda})$ is contained in the closed, two-sided ideal generated by $T$. On the other hand $\cf(\kappa^+)= \kappa^+ > \omega$ and hence $\mathscr{S}_{E_{\kappa^+}}(E_{\lambda})$ is $\sigma_{\rm SOT}$-closed by Theorem B, therefore $T \in \mathscr{S}_{E_{\kappa^+}}(E_{\lambda})$. Thus $\mathscr{S}_{E_{\kappa^+}}(E_{\lambda})$ and the closed, two-sided ideal generated by $T$ must coincide.
\end{proof}

The proposition above should be compared with \cite[Proposition~4.3]{dawsideal}. In the said result, it is shown that any element of $\mathscr{X}(E_{\lambda})$ generates $\mathscr{X}(E_{\lambda})$ as a closed, two-sided ideal (here $E_{\lambda}= c_0(\lambda)$ or $E_{\lambda}= \ell_p(\lambda)$, where $\lambda$ is any uncountable cardinal and  $p \in [1, \infty)$.) It should be noted that $\mathscr{X}(E_{\lambda})= \mathscr{S}_{E_{\omega_1}}(E_{\lambda})$ for $E_{\lambda}= c_0(\lambda)$ and $E_{\lambda}= \ell_p(\lambda)$ for $p \in [1, \infty)$, by Lemma \ref{seprangeomega1}.

\subsection{The SHAI property of long sequence spaces}

We recall that (a slightly more general version of) the following result was proved in \cite[Lemma~2.6]{horvath2}.

\begin{Lem}\label{noninjbigkernel}
Let $X$ and $Y$ be non-zero Banach spaces, and let $\psi\colon  \mathscr{B}(X) \rightarrow \mathscr{B}(Y)$ be a~surjective, non-injective algebra homomorphism. Then $$\mathscr{E}(X) \subseteq \Ker(\psi).$$
\end{Lem}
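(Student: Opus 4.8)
The plan is to exploit two standard facts: first, that $\mathscr{E}(X)$ is the largest two-sided ideal of $\mathscr{B}(X)$ consisting of inessential operators, and second, that the kernel of $\psi$ is a closed, proper two-sided ideal of $\mathscr{B}(X)$ which is non-trivial because $\psi$ is not injective. The key point will be a spectral/Fredholm-theoretic argument: if $T \in \mathscr{B}(X)$ is \emph{not} inessential, then by definition there is $A \in \mathscr{B}(X)$ such that $I_X - AT$ (say) fails to be Fredholm, and one can leverage this to show that $T$ cannot lie in any proper closed ideal that meets $\mathscr{B}(X)\setminus\{0\}$ in a suitable way — more precisely, one shows that the ideal generated by a non-inessential operator, together with any non-zero ideal, exhausts $\mathscr{B}(X)$ after quotienting.

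First I would recall that $\Ker(\psi)$ is a norm-closed (automatic continuity of $\psi$, via Johnson's theorem as cited) two-sided ideal of $\mathscr{B}(X)$, and that it is non-zero since $\psi$ is non-injective. Next I would pass to the Calkin-type quotient: consider the composite $\mathscr{B}(X) \to \mathscr{B}(X)/\mathscr{E}(X)$. The crucial algebraic fact is that $\mathscr{E}(X)$ is characterised as $\{T : \pi(T) \in \jacrad(\mathscr{B}(X)/\mathscr{E}(X))\}$ is trivial, i.e. $\mathscr{B}(X)/\mathscr{E}(X)$ has the property that every non-zero ideal contains an element whose image is not a topological zero divisor — equivalently, by the Fredholm characterisation of $\mathscr{E}(X)$, an operator $T \notin \mathscr{E}(X)$ has the feature that $\pi(T)$ generates (modulo $\mathscr{E}(X)$) an ideal containing an invertible element of some unitisation, or more directly: if $T \notin \mathscr{E}(X)$ then there is $A$ with $I_X - AT$ not Fredholm, hence $I_X - AT$ lies outside $\mathscr{E}(X)$ but its coset is a zero divisor, forcing, when combined with a non-zero element of $\Ker(\psi)$, the conclusion $I_Y \in \Ran$ contradictions. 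The cleanest route, which I would actually pursue, is the one from \cite{horvath2}: show the contrapositive at the level of the quotient algebra $\mathscr{B}(X)/\Ker(\psi) \cong \mathscr{B}(Y)$, using that a non-zero ideal $\mathscr{J} = \Ker(\psi)$ of $\mathscr{B}(X)$ necessarily contains a non-zero finite-rank operator's worth of ``room'' — indeed any non-zero ideal of $\mathscr{B}(X)$ contains $\mathscr{F}(X)$ — so $\mathscr{F}(X) \subseteq \Ker(\psi)$, hence $\mathscr{A}(X) = \overline{\mathscr{F}(X)} \subseteq \Ker(\psi)$ by closedness, and then upgrade $\mathscr{A}(X)$ to $\mathscr{E}(X)$.

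For the upgrade step, I would argue that $\psi$ induces a surjective homomorphism $\overline{\psi}\colon \mathscr{B}(X)/\mathscr{A}(X) \to \mathscr{B}(Y)$ (well-defined since $\mathscr{A}(X) \subseteq \Ker\psi$), and that $\mathscr{E}(X)/\mathscr{A}(X)$ is precisely the Jacobson radical of $\mathscr{B}(X)/\mathscr{A}(X)$ — this is a classical fact (the inessential operators are exactly those whose coset is quasi-nilpotent-generating / lies in the radical of the Calkin algebra sense). A surjective homomorphism maps the radical into the radical, so $\overline{\psi}[\jacrad(\mathscr{B}(X)/\mathscr{A}(X))] \subseteq \jacrad(\mathscr{B}(Y))$; but $\mathscr{B}(Y)$ is unital and $\psi$ surjective forces — here is where non-injectivity is not yet used — actually I need that $\Ker\psi \supsetneq \mathscr{A}(X)$ is large, so instead: since $\mathscr{A}(X) \subseteq \Ker\psi$ and $\Ker\psi$ is an ideal, $\Ker\psi/\mathscr{A}(X)$ is a non-zero ideal of $\mathscr{B}(X)/\mathscr{A}(X)$; I claim it must contain the radical $\mathscr{E}(X)/\mathscr{A}(X)$. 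This follows because in $\mathscr{B}(X)/\mathscr{A}(X)$ the radical is contained in every non-zero closed ideal — a consequence of the fact (provable from Fredholm theory, cf. the structure of the Calkin algebra) that the radical here is the minimal closed ideal, or more robustly, because any non-zero ideal of $\mathscr{B}(X)$ contains $\mathscr{F}(X)$ and hence, after closure, contains $\mathscr{A}(X)$, and the finer claim that it contains all of $\mathscr{E}(X)$ reduces to: an inessential operator is in the closed ideal generated by any single non-zero operator modulo $\mathscr{A}(X)$, which one checks using that $T \in \mathscr{E}(X)$ iff $\sigma(ST)$ is finite or sequence-to-$0$ for all $S$, combined with a Riesz-decomposition / spectral projection argument. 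I expect \textbf{this last reduction — that $\mathscr{E}(X)$ lies inside every non-zero closed two-sided ideal of $\mathscr{B}(X)$, or the precise radical identification — to be the main obstacle}, and I would handle it by citing the relevant structure theory (Fredholm theory of operators, the fact that $\mathscr{E}(X)/\mathscr{A}(X) = \jacrad(\mathscr{B}(X)/\mathscr{A}(X))$, e.g. from Pietsch or Dales) and then invoking that a surjective algebra homomorphism between (unital) Banach algebras carries the radical into the radical, together with the observation that $\Ker\psi$, being a non-zero ideal hence containing $\mathscr{A}(X)$, corresponds to a non-zero ideal of the quotient which therefore swallows the radical $\mathscr{E}(X)/\mathscr{A}(X)$.
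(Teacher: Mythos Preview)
The paper does not actually prove this lemma; it simply recalls it from \cite[Lemma~2.6]{horvath2}. So there is no in-paper argument to compare your proposal against.

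On the merits of your proposal: you reached the right approach and then abandoned it at the very moment it concludes. After establishing $\mathscr{A}(X) \subseteq \Ker(\psi)$ and invoking Kleinecke's identification $\mathscr{E}(X)/\mathscr{A}(X) = \jacrad\bigl(\mathscr{B}(X)/\mathscr{A}(X)\bigr)$, the induced surjection $\overline{\psi}\colon \mathscr{B}(X)/\mathscr{A}(X) \to \mathscr{B}(Y)$ carries the radical into $\jacrad(\mathscr{B}(Y))$. The missing observation is simply that $\mathscr{B}(Y)$ is \emph{semisimple} for every non-zero Banach space $Y$ (classical: if $T\neq 0$, pick $y$ with $Ty\neq 0$ and $f\in Y^*$ with $\langle Ty,f\rangle=1$; then $(y\otimes f)T$ has eigenvalue $1$, so $T$ is not in the radical). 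Hence $\overline{\psi}\bigl(\mathscr{E}(X)/\mathscr{A}(X)\bigr)=\{0\}$, i.e.\ $\mathscr{E}(X)\subseteq\Ker(\psi)$. Your worry that ``non-injectivity is not yet used'' was misplaced: non-injectivity was already spent exactly where you first used it, to obtain $\Ker(\psi)\neq\{0\}$ and hence $\mathscr{F}(X)\subseteq\mathscr{A}(X)\subseteq\Ker(\psi)$.

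The alternative route you then propose --- that $\mathscr{E}(X)$ lies inside \emph{every} non-zero closed two-sided ideal of $\mathscr{B}(X)$, equivalently that $\mathscr{E}(X)/\mathscr{A}(X)$ is the minimal non-zero closed ideal of the quotient --- is false in general and is a genuine dead end. Whenever $\mathscr{K}(X)\subsetneq\mathscr{E}(X)$ (which occurs for many classical spaces, e.g.\ any $X$ admitting a strictly singular non-compact operator), the ideal $\mathscr{K}(X)$ is a non-zero closed two-sided ideal not containing $\mathscr{E}(X)$. The Jacobson radical is the intersection of maximal (left) ideals, not a minimal ideal; there is no reason for it to sit inside every non-zero ideal. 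Stay with the semisimplicity argument.
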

We might wonder what other ideals the kernel of a surjective, non-injective algebra homomorphism must contain. Let us recall the following standard terminology. { If $X$ and $W$ are Banach spaces, then the set
\[
\overline{\mathscr{G}}_{W}(X):= \overline{\spanning}\big\{ST \colon T \in \mathscr{B}(X,W), S \in \mathscr{B}(W,X) \big\}
\]
is a closed, two-sided ideal of $\mathscr{B}(X)$ and it is called the \textit{ideal of operators that approximately factor through $W$}. In particular, if $X$ has a complemented subspace isomorphic to $W$, and $P \in \mathscr{B}(X)$ is an idempotent with $\Ran(P) \cong W$ then $\overline{\mathscr{G}}_{W}(X)$ coincides with the closed, two-sided ideal generated by $P$.} 

\begin{Prop}\label{complshai}
	Let $X$ be a Banach space and suppose that $W$ is a non-zero, complemented subspace of $X$ such that $W$ has the SHAI property. Let $Y$ be a non-zero Banach space and let $\psi\colon  \mathscr{B}(X) \rightarrow \mathscr{B}(Y)$ be a surjective, non-injective algebra homomorphism. Then $$\overline{\mathscr{G}}_{W}(X) \subseteq \Ker (\psi).$$
\end{Prop}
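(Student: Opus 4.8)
I want to show that if $\psi\colon \mathscr B(X)\to \mathscr B(Y)$ is a surjective, non-injective algebra homomorphism and $W$ is a complemented subspace of $X$ with the SHAI property, then $\overline{\mathscr G}_W(X)\subseteq \Ker(\psi)$. Since $\overline{\mathscr G}_W(X)$ is the closed two-sided ideal generated by any idempotent $P\in\mathscr B(X)$ with $\Ran(P)\cong W$ (as recalled just before the statement), and since $\Ker(\psi)$ is a closed two-sided ideal (closed because $\psi$ is automatically continuous by Johnson's theorem, invoked in the preliminaries), it suffices to prove that $\psi(P)=0$ for one such idempotent $P$. So the whole problem reduces to: \emph{a single idempotent whose range is isomorphic to $W$ must be killed by $\psi$}.

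\textbf{Reducing to $\mathscr B(W)$.} Fix an idempotent $P\in\mathscr B(X)$ with $E:=\Ran(P)\cong W$; write $\iota\colon E\to X$ for the inclusion and $P|^E\colon X\to E$ for the corestriction, so that $P = \iota\circ (P|^E)$ and $(P|^E)\circ\iota = I_E$. The \emph{corner algebra} $P\mathscr B(X)P$ is isomorphic as a unital Banach algebra to $\mathscr B(E)\cong\mathscr B(W)$ via $T\mapsto (P|^E)\circ T\circ\iota$, with inverse $S\mapsto \iota\circ S\circ (P|^E)$; under this identification the unit of $P\mathscr B(X)P$ is $P$ itself. Now set $Q:=\psi(P)$, which is an idempotent in $\mathscr B(Y)$. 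I claim $\psi$ restricts to a surjective algebra homomorphism $\psi_P\colon P\mathscr B(X)P\to Q\mathscr B(Y)Q$: indeed $\psi(PTP)=QQ(P)\psi(T)\psi(P)$... more carefully, $\psi(P T P)=\psi(P)\psi(T)\psi(P)=Q\psi(T)Q\in Q\mathscr B(Y)Q$, and conversely any $Q S Q$ with $S\in\mathscr B(Y)$ equals $\psi(P)\psi(R)\psi(P)=\psi(PRP)$ for some $R$ with $\psi(R)=S$, using surjectivity of $\psi$. Composing with the corner isomorphisms, $\psi_P$ induces a surjective algebra homomorphism $\Phi\colon \mathscr B(W)\to Q\mathscr B(Y)Q$.

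\textbf{Invoking SHAI of $W$.} The target $Q\mathscr B(Y)Q$ is a unital Banach algebra (with unit $Q$); the point is to realise it as $\mathscr B(\widehat Y)$ for the Banach space $\widehat Y:=\Ran(Q)$, \emph{provided $Q$ is a bounded idempotent}, which it is. Indeed $Q\mathscr B(Y)Q\cong \mathscr B(\Ran Q)$ canonically via the same corner construction (corestrict and restrict along $Q$), so $\Phi$ becomes a surjective algebra homomorphism $\mathscr B(W)\to\mathscr B(\widehat Y)$. Since $W$ has the SHAI property, either $\widehat Y=\{0\}$ or $\Phi$ is injective, hence an isomorphism. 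I must rule out the injective alternative. If $\Phi$ were injective, then $\psi_P$ is injective on $P\mathscr B(X)P$; but the kernel of $\psi$ is a nonzero closed two-sided ideal of $\mathscr B(X)$ and by Lemma~\ref{noninjbigkernel} it contains $\mathscr E(X)$, in particular it contains all finite-rank operators, and in particular it contains a rank-one idempotent $R$ sitting inside $E=\Ran P$ (pick any $0\neq e\in E$ and $f\in X^*$ with $\langle e,f\rangle=1$, then $R:=e\otimes f$ is a rank-one idempotent with $R=PRP$ since $\Ran R\subseteq E$ and one may further adjust $f$ to vanish off $E$, e.g.\ replace $f$ by $f\circ P$). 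Then $R\in P\mathscr B(X)P$, $R\neq 0$, yet $\psi(R)=0$ — contradicting injectivity of $\psi_P$. Hence $\widehat Y=\Ran(Q)=\{0\}$, i.e.\ $Q=\psi(P)=0$, which is exactly what we needed: $P\in\Ker(\psi)$, so $\overline{\mathscr G}_W(X)$, being the closed two-sided ideal generated by $P$, is contained in the closed two-sided ideal $\Ker(\psi)$.

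\textbf{Main obstacle.} The delicate point is the passage from ``surjective homomorphism out of the corner algebra $P\mathscr B(X)P$'' to ``surjective homomorphism $\mathscr B(W)\to\mathscr B(\widehat Y)$ of the form to which the SHAI hypothesis literally applies'': one must check that the compression $\psi_P$ genuinely lands in $Q\mathscr B(Y)Q$ and is onto it (the surjectivity uses $\psi(I_X)=I_Y$ together with $Q=\psi(P)$), that the corner-algebra identifications on both sides are algebra isomorphisms respecting units, and that $\widehat Y=\Ran Q$ with its induced norm is a genuine Banach space so that $Q\mathscr B(Y)Q\cong\mathscr B(\widehat Y)$. Everything else is a routine diagram-chase once one produces the rank-one idempotent inside $E$ lying in the kernel, for which Lemma~\ref{noninjbigkernel} is exactly the input needed; note the hypothesis that $W$ (hence $E$, hence $X$) is nonzero guarantees such a rank-one idempotent exists.
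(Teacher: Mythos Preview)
Your proof is correct and follows essentially the same approach as the paper's: both reduce to showing $\psi(P)=0$, construct a surjective algebra homomorphism $\mathscr{B}(W)\to\mathscr{B}(\Ran\psi(P))$ via the corner/compression map, invoke the SHAI property of $W$ to force injectivity if $\Ran\psi(P)\neq\{0\}$, and then obtain a contradiction from a rank-one operator in $P\mathscr{B}(X)P\cap\Ker(\psi)$. The only cosmetic differences are that the paper phrases the contradiction as ``$PAP=0$ for all $A\in\Ker(\psi)$'' before specialising to a rank-one $A$, and does not bother to adjust the functional so that the rank-one operator literally lies in the corner (it just evaluates at $x$), whereas you ensure $R=PRP$ explicitly.
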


\begin{proof}
	 Let $P \in \mathscr{B}(X)$ be an idempotent with $W = \Ran(P)$.\smallskip
	
	Let us observe that in order to prove the proposition it is enough to show that $P \in \Ker(\psi)$. Indeed; if this holds then $\overline{\mathscr{G}}_{W}(X) \subseteq \Ker (\psi)$ by definition, as $\Ker(\psi)$ is a closed, two-sided ideal of $\mathscr{B}(X)$.\smallskip
	
	Assume in search of a contradiction that $P \notin \Ker(\psi)$. Then $Z:= \Ran(\psi(P))$ is a non-zero, closed (complemented) subspace of $Y$. Let us fix $T \in \mathscr{B}(W)$, we observe that $$\psi(P \vert_W \circ T \circ P \vert^W) |_Z^Z \in \mathscr{B}(Z).$$ The only thing we need to check is that the range of $\psi(P \vert_W T P \vert^W) \vert_Z$ is contained in $Z$ which is clearly true since $\psi(P) \psi(P \vert_W T P \vert^W) \psi(P) = \psi(P \vert_W T P \vert^W)$. Consequently the map
	\begin{align}
	\theta\colon  \mathscr{B}(W) \rightarrow \mathscr{B}(Z); \quad T \mapsto \psi(P \vert_W \circ T \circ P \vert^W) \vert_Z^Z
	\end{align}
	is well-defined. It is immediate to see that $\theta$ is a linear map. To see that it is multiplicative, it is enough to recall that $P \vert^W P \vert_W = I_W$, thus by multiplicativity of $\psi$, we obtain $\theta
	(T) \theta(S) = \theta(TS)$ for any $T,S \in \mathscr{B}(W)$.\smallskip
	
	We show that $\theta$ is surjective. To see this we fix $R \in \mathscr{B}(Z)$. Then $\psi(P) \vert_Z R \psi(P) \vert^Z \in \mathscr{B}(Y)$ so by surjectivity of $\psi$ it follows that there exists $A \in \mathscr{B}(X)$ such that $\psi(A) = \psi(P) \vert_Z R \psi(P) \vert^Z$. Consequently $\psi(PAP) = \psi(P) \psi(A) \psi(P) = \psi(P) \vert_Z R \psi(P) \vert^Z$ and thus by the definition of $\theta$ we obtain 
	\begin{align}
	\theta(P \vert^W \circ A \circ P \vert_W) &=  \psi(P \vert_W \circ P \vert^W \circ A \circ P \vert_W \circ P \vert^W) \vert_Z^Z = \psi(P \circ A \circ P) \vert_Z^Z \notag \\
	&= \left( \psi(P) \vert_Z \circ R \circ \psi(P) \vert^Z \right) \Big\vert_Z^Z = R.
	\end{align}
	This proves that $\theta$ is a surjective algebra homomorphism. Since $Z$ is non-zero, from the SHAI property of $W$ it follows that $\theta$ is injective.
	
	Now let $A \in \mathscr{B}(X)$ be such that $A \in \Ker(\psi)$. Then $\psi(A)=0$ implies 
	\begin{align}
	\theta (P \vert^W \circ A \circ P \vert_W) &=  \psi (P \circ A \circ P) \vert_Z^Z = \left( \psi(P) \circ \psi(A) \circ  \psi(P) \right) \big\vert_Z^Z = 0.
	\end{align}
	Since $\theta$ is injective it follows that $P \vert^W A P \vert_W =0$ or equivalently $PAP =0$. We apply this in the following specific situation: We choose $x \in W = \Ran(P) \subseteq X$ and $\xi \in  X^*$ norm one vectors with $\langle x, \xi \rangle =1$. As $\psi$ is not injective, in particular we have $x \otimes \xi \in \mathscr{F}(X) \subseteq \Ker(\psi)$, consequently $P(x \otimes \xi)P =0$. Thus $0 = (P(x \otimes \xi)P)x = \langle Px, \xi \rangle Px = \langle x, \xi \rangle x =x$, a contradiction.
	
	Consequently $P \in \Ker(\psi)$ must hold, as required.
\end{proof}

We obtain the following corollary for Banach spaces of continuous functions, which can be viewed as a strengtening of the first part of \cite[Proposition~44]{lkad}.

\begin{Cor}
	Let $K$ be a compact Hausdorff space. Let $Y$ be a non-zero Banach space and let $\psi\colon  \mathscr{B}(C(K)) \rightarrow \mathscr{B}(Y)$ be a surjective, non-injective algebra homomorphism. Then $\overline{\mathscr{G}}_{c_0}(C(K)) \subseteq \Ker(\psi)$.
\end{Cor}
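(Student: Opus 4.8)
The plan is to obtain this as a corollary of Proposition~\ref{complshai}, applied with $X:=C(K)$ and $W:=c_0$; recall that $c_0$ has the SHAI property (it appears in the list of examples in the introduction). Proposition~\ref{complshai} would then give $\overline{\mathscr{G}}_{c_0}(C(K))\subseteq\Ker(\psi)$ \emph{as soon as $c_0$ is a non-zero complemented subspace of $C(K)$}, so the real content lies in securing that complementation — and this is where I expect the difficulty, since it genuinely fails for some $K$ (when $K=\beta\mathbb{N}$ we have $C(K)=\ell_\infty$, in which $c_0$ is uncomplemented by Phillips' lemma). I would therefore organise the proof around a case distinction, after first clearing the degenerate cases: if $K$ is empty or finite, then $C(K)$ is finite-dimensional, $\mathscr{B}(C(K))$ is a simple algebra, and a surjective homomorphism onto $\mathscr{B}(Y)$ with $Y\neq 0$ cannot be non-injective, so the statement is vacuous. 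Thus one may assume $K$ infinite, in which case $C(K)$ contains an isometric copy of $c_0$ in the usual way: choose a sequence $(U_n)_{n=1}^\infty$ of pairwise disjoint non-empty open subsets of $K$ (built by an elementary induction, peeling off isolated points or, in their absence, splitting off open sets via the Hausdorff property while retaining an infinite open remainder) and, by Urysohn's lemma, norm-one bump functions $f_n\in C(K)$ supported in $U_n$ with $f_n(x_n)=1$ for some $x_n\in U_n$; then $\overline{\spanning}\{f_n\colon n\in\mathbb{N}\}$ is isometric to $c_0$.

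If $c_0$ is complemented in $C(K)$, Proposition~\ref{complshai} applies verbatim and we are done. The substantive case is when $c_0$ is \emph{not} complemented in $C(K)$. The key step there will be to show that then \emph{every} operator $T\in\mathscr{B}(C(K),c_0)$ is weakly compact. Indeed, if some such $T$ were bounded below on a closed subspace $W\cong c_0$ of $C(K)$, then $T(W)$ would be a closed copy of $c_0$ inside the separable space $c_0$, hence complemented in $c_0$ by Sobczyk's theorem; composing the resulting projection $c_0\to T(W)$ with $T$ and with the isomorphism $(T|_W)^{-1}\colon T(W)\to W$ would produce a bounded projection of $C(K)$ onto $W$, contradicting the assumption. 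Hence no $T\in\mathscr{B}(C(K),c_0)$ fixes a copy of $c_0$; since $C(K)$ has Pe\l czy\'{n}ski's property~(V), every such $T$ is weakly compact.

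From this, each generator $S\circ T$ (with $T\in\mathscr{B}(C(K),c_0)$ and $S\in\mathscr{B}(c_0,C(K))$) of the ideal $\overline{\mathscr{G}}_{c_0}(C(K))$ is weakly compact, so $\overline{\mathscr{G}}_{c_0}(C(K))\subseteq\mathscr{W}(C(K))$, as $\mathscr{W}(C(K))$ is a norm-closed linear subspace. Now I would invoke the Dunford--Pettis property of $C(K)$: the product of two weakly compact operators on $C(K)$ is compact, so for any $U\in\mathscr{W}(C(K))$ and any $A\in\mathscr{B}(C(K))$ the operators $(AU)^2$ and $(UA)^2$ lie in $\mathscr{K}(C(K))$, whence $I-AU$ and $I-UA$ are Fredholm (their Calkin-images being invertible, with inverses the images of $I+AU$ and $I+UA$); that is, $\mathscr{W}(C(K))\subseteq\mathscr{E}(C(K))$. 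Finally Lemma~\ref{noninjbigkernel} gives $\mathscr{E}(C(K))\subseteq\Ker(\psi)$, and chaining the inclusions completes this case — and the proof.

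The main obstacle, as flagged, is that there is no uniform complementation statement for $c_0$ inside $C(K)$, so the argument must split; the heavier half is the uncomplemented case, which leans on Sobczyk's theorem, property~(V) of $C(K)$, and the Dunford--Pettis property. It is also there that the relation to \cite[Proposition~44]{lkad} is most apparent, the present statement concerning the larger ideal $\overline{\mathscr{G}}_{c_0}(C(K))$.
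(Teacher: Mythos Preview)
Your proof is correct and follows the same overall architecture as the paper's: split according to whether $C(K)$ has a complemented copy of $c_0$, apply Proposition~\ref{complshai} in the complemented case, and in the uncomplemented case show that $\overline{\mathscr{G}}_{c_0}(C(K))$ lies inside $\mathscr{E}(C(K))$ and then invoke Lemma~\ref{noninjbigkernel}. The difference is entirely in how the uncomplemented case is handled. The paper recognises this case structurally: by Cembranos' theorem, $C(K)$ has no complemented copy of $c_0$ precisely when $C(K)$ is Grothendieck, whence (Diestel) $\mathscr{X}(C(K))\subseteq\mathscr{W}(C(K))$, and then Pe\l czy\'nski's identity $\mathscr{W}(C(K))=\mathscr{S}(C(K))$ gives $\overline{\mathscr{G}}_{c_0}(C(K))\subseteq\mathscr{X}(C(K))\subseteq\mathscr{S}(C(K))\subseteq\mathscr{E}(C(K))$. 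You instead argue directly at the level of operators: Sobczyk's theorem plus property~(V) force every operator $C(K)\to c_0$ to be weakly compact, so $\overline{\mathscr{G}}_{c_0}(C(K))\subseteq\mathscr{W}(C(K))$, and then the Dunford--Pettis property yields $\mathscr{W}(C(K))\subseteq\mathscr{E}(C(K))$. Your route avoids the Cembranos characterisation and is arguably more self-contained; the paper's route is shorter and, by identifying Case~2 with the Grothendieck property, more informative about \emph{which} $K$ fall into it. (Your preliminary construction of an isometric copy of $c_0$ inside $C(K)$ is correct but not actually used in either branch, so you may safely drop it.)
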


\begin{proof}
	If $C(K)$ has a complemented subspace isomorphic to $c_0$, then \cite[Proposition~1.2]{horvath2} and Proposition \ref{complshai} yield the claim. Now assume that $C(K)$ does not contain a complemented copy of $c_0$. By \cite[Corollary~2]{cembranos} this is equivalent to saying that $C(K)$ is a Grothendieck space. By \cite{diestel73} we thus have $\mathscr{X}(C(K)) \subseteq \mathscr{W}(C(K))$. By Pełczy\'{n}ski's theorem \cite[Theorem~1]{pel}, we also know that  $\mathscr{W}(C(K)) = \mathscr{S}(C(K))$. Consequently, with Lemma \ref{noninjbigkernel} we conclude
	\begin{align}
	\overline{\mathscr{G}}_{c_0}(C(K)) \subseteq \mathscr{X}(C(K)) \subseteq \mathscr{W}(C(K)) = \mathscr{S}(C(K)) \subseteq \mathscr{E}(C(K)) \subseteq \Ker(\psi),
	\end{align}
	which finishes the proof.
\end{proof}
	
We are now ready to prove Theorem A.

\begin{proof}[Proof of Theorem A]
	We prove by transfinite induction. Let $\lambda$ be a fixed infinite cardinal and let $E_{\lambda}$ be $c_0(\lambda)$, $\ell_{\infty}^c(\lambda)$ or $\ell_p(\lambda)$, where $p \in [1, \infty)$. Suppose $E_{\kappa}$ has the SHAI property for each cardinal $\kappa < \lambda$. \smallskip 
	
	Assume towards a contradiction that there is a non-zero Banach space $Y$ and a surjective, non-injective algebra homomorphism $\psi\colon  \mathscr{B}(E_{\lambda}) \rightarrow \mathscr{B}(Y)$. We first observe that $\Ker(\psi) \neq \mathscr{B}(E_{\lambda})$, since $Y$ is non-zero. Secondly $Y$ cannot be finite-dimensional. Indeed, otherwise $\mathscr{B}(Y)$ would be finite-dimensional, hence $\Ker(\psi)$ were finite-codimensional in $\mathscr{B}(E_{\lambda})$. But $E_{\lambda} \cong E_{\lambda} \oplus E_{\lambda}$ therefore $\mathscr{B}(E_{\lambda})$ cannot have finite-codimensional proper two-sided ideals, as it follows, for example, from applying \cite[Propositions~1.9 and 2.3]{ringfinofopalgs} and \cite[Proposition~1.3.34]{Dales} successively. Fix a cardinal $\kappa < \lambda$. As $E_{\kappa}$ is isomorphic to a complemented subspace of $E_{\lambda}$, there is an~idempotent $P_{(\kappa)} \in \mathscr{B}(E_{\lambda})$ with $\Ran(P_{(\kappa)}) \cong E_{\kappa}$. Clearly $P_{(\kappa)} \notin \mathscr{S}_{E_{\kappa}}(E_{\lambda})$, hence by Theorem \ref{kappasingingen} it follows that $\mathscr{S}_{E_{\kappa^+}}(E_{\lambda}) \subseteq \overline{\mathscr{G}}_{E_{\kappa}}(E_{\lambda})$. As $E_{\kappa}$ has the SHAI property by the inductive hypothesis, we conclude from Proposition \ref{complshai} that
	\begin{align}\label{forinduction}
	\mathscr{S}_{E_{\kappa^+}}(E_{\lambda}) \subseteq \overline{\mathscr{G}}_{E_{\kappa}}(E_{\lambda}) \subseteq \Ker(\psi).
	\end{align}

	We \textit{claim} that $\mathscr{S}_{E_{\lambda}}(E_{\lambda}) \subseteq \Ker(\psi)$. We consider three cases:
	
	\begin{enumerate}
		\item $\lambda = \omega$;
		\item $\lambda$ is a successor cardinal;
		\item $\lambda$ is uncountable and not a successor cardinal.
	\end{enumerate}
	
	(1) If $\lambda = \omega$ then $E_{\lambda} = c_0$ or $E_{\lambda} = \ell_p$, where $p \in [1, \infty]$. As Lemma \ref{noninjbigkernel} yields that we have $\mathscr{E}(E_{\lambda}) \subseteq \Ker(\psi)$, the claim follows from Corollary \ref{classicalideal}.
	
	(2) If $\lambda$ is a successor cardinal then $\lambda = \kappa^+$ for some cardinal $\kappa < \lambda$. From \eqref{forinduction} we thus conclude
	\[ \mathscr{S}_{E_{\lambda}}(E_{\lambda}) = \mathscr{S}_{E_{\kappa^+}}(E_{\lambda}) \subseteq \Ker(\psi). \]

	(3) Lastly, let $\lambda$ be an uncountable cardinal which is not a successor of any cardinal. By \eqref{forinduction} we clearly have $\mathscr{S}_{E_{\kappa}}(E_{\lambda}) \subseteq \mathscr{S}_{E_{\kappa^+}}(E_{\lambda}) \subseteq \Ker(\psi)$ for each $\kappa < \lambda$. As $\Ker(\psi)$ is (norm-)closed, in view of \cite[Lemma~3.15]{jksch} we obtain
	\[ \mathscr{S}_{E_{\lambda}}(E_{\lambda}) = \overline{ \bigcup\limits_{\kappa < \lambda} \mathscr{S}_{E_{\kappa}}(E_{\lambda})} \subseteq \Ker(\psi). \]
	Hence the claim is proved. Observe that $\mathscr{S}_{E_{\lambda}}(E_{\lambda})$ is the unique maximal ideal of $\mathscr{B}(E_{\lambda})$ by Corollary \ref{complhommaxideal} and \cite[Theorem~3.14]{jksch}, or \cite[Theorem~1.1]{jksch} in the case of $E_{\lambda} = \ell_{\infty}^c(\lambda)$. Since $\Ker(\psi)$ is a proper, two-sided ideal of $\mathscr{B}(E_{\lambda})$, we must have $\mathscr{S}_{E_{\lambda}}(E_{\lambda}) = \Ker(\psi)$. This is however equivalent to $\mathscr{B}(E_{\lambda}) / \mathscr{S}_{E_{\lambda}}(E_{\lambda}) \cong \mathscr{B}(Y)$, which is impossible. Indeed; the left-hand side is simple, since $\mathscr{S}_{E_{\lambda}}(E_{\lambda})$ is a maximal two-sided ideal of $\mathscr{B}(E_{\lambda})$; whereas $\mathscr{B}(Y)$ is not simple since $Y$ is infinite-dimensional. Thus $\psi$ must be injective and the proof is complete.
\end{proof}

\subsection{The SHAI property is not a three-space property}

We remind the reader that it follows from \cite[Proposition~1.6]{horvath2} that if $E$ is a Banach space and $F$ is a complemented subspace of $E$ such that both $F$ and $E/F$ have the SHAI property then $E$ itself has the SHAI property. Until now, however, we were not able to determine whether this holds without insisting on $F$ being complemented in $E$.\smallskip

In light of a recent deep result due to Koszmider and Laustsen (\cite{lkad}) and with the aid of Theorem A, we can conclude now that this is not the case.\smallskip

 Briefly speaking, an Isbell--Mr\'owka space $K_{\mathscr{A}}$ was constructed in \cite{lkad} such that the algebra of operators of $C_0(K_{\mathscr{A}})$ ---the Banach space of continuous functions on $K_{\mathscr{A}}$ vanishing at infinity--- admits a character (see \cite[Theorem~2~(iii)]{lkad}). Let us recall some terminology and the details of the constriction, for more details we refer the reader to \cite[Section~1]{lkad}. Given an almost disjoint family $\mathscr{A} \subseteq [\mathbb{N}]^{\omega}$, consider the Banach space $$\mathscr{X}_{\mathscr{A}}:= \overline{\spanning} \{ \mathds{1}_{B}\colon B \in \mathscr{A} \cup [\mathbb{N}]^{< \omega} \}.$$ Clearly $c_0 \subseteq \mathscr{X}_{\mathscr{A}}.$ In fact, $\mathscr{X}_{\mathscr{A}}$ is a closed, self-adjoint, non-unital subalgebra of the $C^*$-algebra $\ell_{\infty}$.\smallskip

On the one hand, a routine argument shows that $\mathscr{X}_{\mathscr{A}} / c_0$ and $c_0(\mathscr{A})$ are isometrically isomorphic as (non-unital) $C^*$-algebras. On the other hand, the Gel'fand--Naimark Theorem yields a (non-compact) locally compact, Hausdorff, scattered space $K_{\mathscr{A}}$ such that $\mathscr{X}_{\mathscr{A}}$ and $C_0(K_{\mathscr{A}})$ are isometrically isomorphic as $C^*$-algebras, hence as Banach spaces. Topological spaces of the form $K_{\mathscr{A}}$ are called \emph{Isbell--Mr\'owka spaces}.\smallskip

Armed with Theorem A and \cite[Theorem~2~(iii)]{lkad}, we are ready to demonstrate that the SHAI property fails to be a three-space property in every possible way.\pagebreak

\begin{Prop}{\,}\label{not3sp}
	\begin{romanenumerate}
		\item \label{mr1} There is a Banach space $E$ with the SHAI property that has a closed subspace $F$ which does not have the SHAI property.
		\item \label{mr2} There is a Banach space $E$ with the SHAI property that has a closed subspace $F$ such that $E/F$ does not have the SHAI property.
		\item \label{mr3} There is a Banach space $E$ with a subspace $F$ such that both $F$ and $E/F$ have the SHAI property, but $E$ does not.
	\end{romanenumerate}
\end{Prop}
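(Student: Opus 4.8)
The plan is to build all three examples on top of the single object supplied by Koszmider and Laustsen: an Isbell–Mr\'owka space $K_{\mathscr A}$ for which $\mathscr B(C_0(K_{\mathscr A}))$ admits a character $\chi$. Write $Z:=C_0(K_{\mathscr A})$. Since $\mathscr B(Z)$ has a character, $Z$ fails the SHAI property (this is exactly the elementary observation recalled in the introduction: a character is a surjective, non-injective homomorphism onto $\mathscr B(\mathbb K)=\mathbb K$). The key structural features of $Z$ that I would use are the ones spelled out in the excerpt: $c_0\subseteq \mathscr X_{\mathscr A}\cong Z$ isometrically as Banach spaces, the short exact sequence $0\to c_0\to Z\to c_0(\mathscr A)\to 0$, and the fact that $c_0$ and $c_0(\mathscr A)$ both have the SHAI property by Theorem~A (with $\lambda=\omega$ and $\lambda=|\mathscr A|$ respectively — note $c_0(\mathscr A)\cong c_0(\mathfrak c)$ or $c_0$ depending on $|\mathscr A|$, and in either case Theorem~A applies since $|\mathscr A|$ is an infinite cardinal).

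For \ref{mr3}, take $E:=Z$, $F:=c_0$ (the copy inside $\mathscr X_{\mathscr A}$). Then $F=c_0$ has SHAI by Theorem~A, $E/F\cong c_0(\mathscr A)$ has SHAI by Theorem~A, but $E=Z$ fails SHAI as above; this is precisely the failure of the three-space property in the most literal sense. For \ref{mr1}, I would take $E$ to be a Banach space with the SHAI property which contains an isomorphic copy of $Z$ as a \emph{closed} (not necessarily complemented) subspace; the natural choice is $E:=\ell_\infty$, which has SHAI by Theorem~A (or already by \cite{horvath2}), and which contains an isometric copy of every separable Banach space, in particular of $Z$. Set $F$ to be that copy of $Z$; then $F$ fails SHAI. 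For \ref{mr2}, I need a SHAI space $E$ with a quotient failing SHAI: again $E:=\ell_\infty$ works, since every separable Banach space is a quotient of $\ell_1$, but here I want $\ell_\infty$ — instead I would use that $Z$ is separable, hence a quotient of $\ell_1(\mathbb N)$; but $\ell_1$ has SHAI by Theorem~A. So take $E:=\ell_1$ and let $F:=\ker(q)$ for a quotient map $q\colon \ell_1\twoheadrightarrow Z$; then $E/F\cong Z$ fails SHAI while $E=\ell_1$ has it.

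The main obstacle is \ref{mr1}: I must ensure that $Z=C_0(K_{\mathscr A})$ genuinely embeds isomorphically as a closed subspace of some SHAI space, and the cleanest route is the classical fact that every separable Banach space embeds isometrically into $\ell_\infty$ (equivalently into $C[0,1]$), combined with knowing $\ell_\infty$ has the SHAI property — which is item (i) of the list from \cite{horvath2} reproduced in the introduction, and is also a special case of Theorem~A for $p=\infty$ is \emph{not} covered by Theorem~A as stated, so one should cite \cite[Theorem~1.4 or the relevant statement]{horvath2} rather than Theorem~A for the $\ell_\infty$ part. An entirely parallel remark applies to \ref{mr2} with $\ell_1$, but there $p=1\in[1,\infty)$ is covered directly by Theorem~A. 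The only other point requiring a word of care is verifying that $Z$ is separable (it is: $\mathscr X_{\mathscr A}=\overline{\operatorname{span}}\{\mathds 1_B: B\in\mathscr A\cup[\mathbb N]^{<\omega}\}$ need not be separable if $\mathscr A$ is uncountable!). To repair this I would instead, for \ref{mr1} and \ref{mr2}, not insist on $Z$ being separable but rather embed $Z$ into $\ell_\infty(\Gamma)$ for a suitable set $\Gamma$ of density character $|\mathscr A|$ — and $\ell_\infty(\Gamma)\cong \ell_\infty(|\mathscr A|)$ has SHAI by Theorem~A only in the $\ell_p$ and $c_0$ and $\ell_\infty^c$ cases, not $\ell_\infty(\Gamma)$; so the honest fix is: for \ref{mr2} use that $Z$ (of density $\kappa:=|\mathscr A|$) is a quotient of $\ell_1(\kappa)$, which has SHAI by Theorem~A, giving $E:=\ell_1(\kappa)$, $F:=\ker q$, $E/F\cong Z$; and for \ref{mr1} one must produce a SHAI space containing $Z$ as a subspace — here I would invoke that $Z$ embeds into $\ell_\infty^c(\kappa)$ when $\mathscr A\subseteq[\mathbb N]^\omega$ forces every element of $Z$ to be countably supported, which it is since each $\mathds 1_B$ with $B\in[\mathbb N]^{\le\omega}$ is countably supported and these span a dense set; then $\ell_\infty^c(\kappa)$ has SHAI by Theorem~A, and $F:=Z\subseteq \ell_\infty^c(\kappa)$ fails SHAI. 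That is the delicate bookkeeping step; everything else is immediate from Theorem~A and the Koszmider–Laustsen character.
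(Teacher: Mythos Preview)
Your proof of \romanref{mr3} coincides with the paper's. For \romanref{mr1} and \romanref{mr2}, however, the paper takes a much lighter route: it uses the James space $J_2$ rather than the Koszmider--Laustsen space $Z$. Since $J_2$ is separable and $\mathscr{B}(J_2)$ has a character, one simply embeds $J_2$ into $\ell_\infty$ for \romanref{mr1} and realises $J_2$ as a quotient of $\ell_1$ for \romanref{mr2}; both $\ell_\infty$ and $\ell_1$ have SHAI by \cite{horvath2}, and no density bookkeeping is needed.

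Your approach via $Z$ is also correct in the end, but you made it harder than necessary. For \romanref{mr1} you went through a convoluted detour (separability, then $\ell_\infty^c(\kappa)$) because you justified $Z\hookrightarrow\ell_\infty$ via universality for separable spaces. This was unnecessary: by definition $\mathscr{X}_{\mathscr A}$ is a closed subalgebra of $\ell_\infty(\mathbb N)$, so $Z\cong\mathscr{X}_{\mathscr A}\subseteq\ell_\infty$ tautologically, regardless of the cardinality of $\mathscr A$. With that observation your original choice $E=\ell_\infty$, $F=Z$ works immediately, citing \cite{horvath2} for SHAI of $\ell_\infty$. Your fix through $\ell_\infty^c(\kappa)$ does succeed (extend by zero from $\mathbb N$ into $\kappa$), but it is a roundabout way of recovering what was already there. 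For \romanref{mr2} your argument via $\ell_1(|\mathscr A|)$ is fine; the paper's use of $J_2$ just avoids having to track the density character.

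In short: your arguments are valid, but the paper economises by reserving the Koszmider--Laustsen machinery for \romanref{mr3}, where it is genuinely needed, and handling \romanref{mr1}--\romanref{mr2} with the classical, separable James space.
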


\begin{proof}
	\eqref{mr1} Let $E:= \ell_{\infty}$ and let $F$ be an isomorphic copy of the James space $J_2$ in $E$. (Such an $F$ we can always find due to separability of $J_2$.) Now $E$ has the SHAI property by \cite[Proposition~1.2]{horvath2}, but $F$ does not have the SHAI property by \cite[Example~3.4~(2)]{horvath2}.\smallskip
	
	\eqref{mr2} Let $E:= \ell_1$ and let $F$ be a closed subspace of $E$ such that $E/F \cong J_2$. (Such $F$ exists again by separability of $J_2$.) Now $E$ has the SHAI property by \cite[Proposition~1.2]{horvath2}, but $E/F$ does not, as seen above.\smallskip
	
	\eqref{mr3} By \cite[Theorem~2]{lkad}, there is an uncountable almost disjoint family $\mathscr{A} \subseteq [\mathbb{N}]^{\omega}$ such that $\mathscr{B}(C_0(K_{\mathscr{A}}))$ has a character, where $K_{\mathscr{A}}$ is the Isbell--Mr\'owka space corresponding to $\mathscr{A}$. Consequently by \cite[Lemma~2.2]{horvath2} the Banach space $C_0(K_{\mathscr{A}})$ does not have the SHAI property. On the one hand, as $\mathscr{X}_{\mathscr{A}} \cong C_0(K_{\mathscr{A}})$, it follows that $\mathscr{X}_{\mathscr{A}}$ does not have the SHAI property either. On the other hand $\mathscr{X}_{\mathscr{A}} / c_0 \cong c_0(\mathscr{A})$, and it follows from \cite[Proposition~1.2]{horvath2} and Theorem A that both $c_0$ and $c_0(\mathscr{A})$ have the SHAI property. Setting $E:= \mathscr{X}_{\mathscr{A}}$ and $F:= c_0$ concludes the proof.
\end{proof}

\subsection{Open problems}

We conclude this section with some open problems.\smallskip

As discussed before, $C(K)$-spaces may or may not have the SHAI property. Indeed, the spaces $c_0(\lambda)$, $C(\beta \mathbb{N})\cong \ell_\infty$ have the SHAI property (Theorem A), whereas $C[0, \omega_1]$, $C_0(K_{\mathscr{A}})$ and $C(K_0)$ do not have the SHAI property; here $K_0$ is a Koszmider space without isolated points (\cite[Example 2.4 (3) and (6)]{horvath2}). Further naturally arising problems are:

\begin{Que}Does the space $C(K)$ have the SHAI property, where
\begin{romanenumerate}
    \item $K = [0,1]$,
    \item $K = [0, \omega^{\omega}]$,
    \item \label{stcremainder} $K = \beta \mathbb N\setminus \mathbb N$,
    \item $K = \beta \Gamma$ for an uncountable discrete space $\Gamma$?
\end{romanenumerate}
\end{Que}
Let us also ask a question that, if answered negatively, would make various arguments concerning SHAI easier.

\begin{Que}
Do there exist Banach spaces $X$ and $Y$ with $X$ separable and $Y$ non-separable such that there exists a surjective (but not injective) algebra homomorphism $\psi\colon  \mathscr{B}(X) \rightarrow \mathscr{B}(Y)$?
\end{Que}
We have been told by W.~B.~Johnson that this very question had been considered before by various researchers.\smallskip

\begin{Fun}
This work was supported by the Czech Science Foundation (GA\v{C}R) [grant number 19-07129Y; RVO 67985840].
\end{Fun}

\begin{Ack}
The authors are grateful to Tommaso Russo (Prague) for many enlightening conversations about the contents of the paper, especially for the discussions regarding Lemma \ref{seprangeomega1}. They also thank the anonymous referee for carefully reading the paper and spotting the small gap in the proof of Lemma \ref{notsigmasotclosed}. The referee's comments and suggestions helped to improve the quality of the paper a great deal.
\end{Ack}


\bibliographystyle{plain}
\bibliography{kitoltendo}

\end{document}